\newfont{\Bbb}{msbm10 scaled\magstephalf}
\newtheorem{theorem}{Theorem}[section]
\newtheorem{lemma}[theorem]{Lemma}
\newtheorem{proposition}[theorem]{Proposition}
\newtheorem{corollary}[theorem]{Corollary}
\theoremstyle{definition}
\newtheorem{definition}[theorem]{Definition}
\newtheorem{example}[theorem]{Example}
\theoremstyle{remark}
\newtheorem{remark}[theorem]{Remark}
\numberwithin{equation}{section}
\begin{document}
\setcounter{page}{1}

\title[Ranks of Commutators and Generalized Semicommutators]
{Ranks of Commutators and Generalized Semicommutators of Quasihomogeneous Toeplitz Operators}

\author[X.T Dong and Z.H. Zhou]{Xing-Tang Dong and Ze-Hua Zhou}

\address{\newline Xing-Tang Dong (Corresponding author)\newline Department of Mathematics, Tianjin University, Tianjin 300354, P.R. China.}
\email{dongxingtang@163.com}

\address{\newline  Ze-Hua Zhou\newline Department of Mathematics, Tianjin University, Tianjin 300354, P.R. China.} \email{zehuazhoumath@aliyun.com;
zhzhou@tju.edu.cn}

\subjclass[2010]{Primary 47B35; Secondary  47B47.}

\keywords{quasihomogeneous Toeplitz operators, harmonic Bergman space, Bergman space,
finite rank, commutator, generalized semicommutator.}

\begin{abstract}We study the ranks of commutators and generalized semicommutators of Toeplitz operators with quasihomogeneous symbols
on both the harmonic Bergman space and the Bergman Space.
In particular, when one of quasihomogeneous symbols is the the form of $e^{ik\theta }r^{m}$, we first obtain specific sufficient and necessary conditions for commutators and generalized semicommutators to be finite rank. Then we make further efforts to determine the range of each finite rank commutator and generalized semicommutators, and consequently the explicit canonical form and the rank are obtained. Thus, the finite rank problem of commutators and generalized semicommutators of such special Toeplitz operators is completely solved. As applications, several interesting corollaries and nontrivial examples are given. Also, we show close connections of the finite rank problem between the harmonic Bergman space and Bergman space cases.
\end{abstract}
 \maketitle

\section{Introduction}

Let $dA$ denote the Lebesgue area measure on the unit disk $D$,
normalized so that the measure of $D$ equals $1$. The harmonic Bergman
space $L_h^2$ is the closed subspace of $L^{2}(D,dA)$ consisting of
the harmonic functions on $D$.
Given $z\in D$, let $K_{z}(w)=\frac{1}{(1-w\overline{z})^2}$ be the
well-known reproducing kernel for the Bergman space
$L^{2}_{a}$ consisting of all $L^2$-analytic functions on $D$. Since $L_h^2=L^{2}_{a}+\overline{L^{2}_{a}}$, it is
easily checked that the reproducing kernel $R_z$ in $L_h^2$ is $$R_z=K_{z}+\overline{K_{z}}-1.$$ Thus, each $R_z$ is real-valued and the
orthogonal projection $Q$ from $L^{2}(D,dA)$ onto $L_h^2$ can be represented by $$Qf(z)=\int_D \left(\frac {1}{(1-z\bar{w})^2} + \frac {1}{(1-\bar{z}w)^2}-1\right) f(w) dA(w)$$
for $f\in L^{2}(D,dA)$. Recall the well-known Bergman projection $P$ has the following integral formula
$$P f(z)= \int_{D}f(w)\overline{K_{z}(w)}dA(w),$$ and hence $Q$ can be rewritten as
$Qf=Pf+\overline{P\overline{f}}-Pf(0).$

For $u\in L^{1}(D,dA)$, we define an operator ${T}_{u}$ with symbol
$u$ on $L_h^2$ by
\begin{equation}\label{DeT}{T}_{u}h=Q(u h)\end{equation} for $h\in L_h^2$. This operator is
always densely defined on the polynomials and not bounded in
general. We are interested in the case where this densely defined
operator is bounded in the $L^{2}_{h}$ norm. In this paper we will consider the case $u$ is a T-function, following \cite{LSZ}.

\begin{definition} Let $F\in L^{1}(D,dA)$.

\begin{enumerate}
\item[(a)]
We say that $F$ is a T-function if the equation \eqref{DeT},
with $u=F$, defines a bounded operator on $L^{2}_{h}$.
\item[(b)]
If $F$ is a T-function, we write $T_{F}$ for the continuous
extension of the operator defined by \eqref{DeT}. We say
that $T_{F}$ is a Toeplitz operator if and only if $T_{F}$ is
defined in this way.
\item[(c)]
If there is an $r\in(0,1)$ such that $F$ is (essentially)
bounded on the annulus $\left\{z:r<|z|<1\right\}$, then we say that
$F$ is "nearly bounded".
\end{enumerate}
\end{definition}

Generally, the T-functions form a proper subset of $L^{1}(D,dA)$
which contains all bounded and "nearly bounded" functions.

A function $f$ is said to be quasihomogeneous of degree
$k\in\mathbb{Z}$ if
\[
f(re^{i\theta})=e^{ik\theta}\varphi (r),
\]
where $\varphi $ is a radial function. In this case the associated
Toeplitz operator $T_f$ is also called quasihomogeneous Toeplitz
operator of degree $k$. By a straightforward
deduction, one can see that $e^{ik\theta}\varphi(r)$ is a
T-function if and only if $\varphi(r)$ is a
T-function.

For two Toeplitz operators $T_{f_{1}}$ and $T_{f_{2}}$, we define the commutator and the semicommutator respectively by $$\left[T_{f_{1}}, T_{f_{2}}\right]=T_{f_{1}}T_{f_{2}}-T_{f_{2}}T_{f_{1}}$$ and $$\left(T_{f_{1}}, T_{f_{2}}\right]=T_{f_{1}}T_{f_{2}}-T_{f_1f_2}.$$
For another Toeplitz operator $T_f$, the generalized semicommutator is defined by $T_{f_{1}}T_{f_{2}}-T_{f}.$

A major goal in the theory of Toeplitz operators on the function spaces is to describe
the zero commutator or generalized semicommutator of given Toeplitz operators. The corresponding problems on the classical Hardy space were completely solved by Brown and Halmos in \cite{BH}. However, in the setting of the Bergman space (or the harmonic Bergman space), only some special Toeplitz operators were considered. For example, harmonic Toeplitz operators has been investigated in \cite{AhC, AxC, ChoeKL, CL, CL1, Z1} at an earlier time,
and recently there has been an increasing interest in
quasihomogeneous Toeplitz operators, see, e.g. \cite{BI, BL, BV, CR, DZ1, DZ2, DZ3, DZ4, DLZ5, LSZ, LZ1, LZ, QS1, QS2, V, ZD}. The natural question to ask is why the quasihomogeneous Toeplitz operator is becoming the focus in this field. Perhaps there are mainly three reasons for it:
\begin{enumerate}
  \item It was shown in \cite{CR} that any function
$f\in L^2({D},dA)$ has
the following polar decomposition:
$f(re^{i\theta })=\sum\limits_{k \in\mathbb{Z}}
{e^{ik\theta } f_k (r)}.$ Obviously, each element of the polar decomposition is quasihomogeneous. So in order to better understand and research general Toeplitz operators, a more complete study of quasihomogeneous Toeplitz operators is needed.
  \item Quasihomogeneous function can be regarded as the generalization of monomial $z^m\overline{z}^n$. As is generally known, monomial Toeplitz operator, particularly when the symbol is analytic (or co-analytic) monomial, plays an important role in exploring basic algebraic properties of Toeplitz operators. Therefore, the research of quasihomogeneous Toeplitz operators in itself is quite meaningful.
  \item There are many nontrivial and complex results about quasihomogeneous Toeplitz operators. For example, if thinking of analytic
functions being placed on the real axis, conjugate analytic on the imaginary
axis and the radial functions on the diagonal $y=x$ in the first quadrant, then it was showed in \cite{CR} that there will be many lines
parallel to the diagonal, "holding" a symbol that gives a Toeplitz operator
commuting with $T_{z^m\overline{z}^n}$ on $L^2_a$. Moreover, each of these lines will hold no more than one such
symbol.
\end{enumerate}

In this paper, we continue to further study the property of the commutator or generalized semicommutator of given Toeplitz operators, i.e., finite rank problem.
Recall that an operator $A$ on a Hilbert space $H$ is said to have finite rank if the closure of $Ran(A)$ which is the range
of the operator has finite dimension. For a bounded finite rank operator $A$ on $H$, define $\emph{rank}(A)=\emph{dim}\ Ran(A)$.

On the Hardy space of the unit disk, the problem of determining when
the commutator or semicommutator has finite rank has been completely solved in \cite{ACD, DingZ}.
Also, the same problem has been studied on $L^{2}_{a}$. Guo, Sun and Zheng \cite{GSZ} showed that
there is no nonzero finite rank commutator or semicommutator of harmonic Toeplitz operators.
Then $\check{\textrm{C}}\textrm{u}\check{\textrm{c}}\textrm{kovi}\acute{\textrm{c}}$ \cite{Cu} studied the finite rank perturbation of the Brown-Halmos type results involving products of harmonic Toeplitz operators, which is called finite rank problem of generalized semicommutator in this paper.
Unlike the harmonic symbol case, $\check{\textrm{C}}\textrm{u}\check{\textrm{c}}\textrm{kovi}\acute{\textrm{c}}$ and Louhichi \cite{CuL} later
showed that commutators and semicommutators of two quasihomogeneous Toeplitz operators of opposite degrees can be nonzero finite rank operators.
More recently, $\check{\textrm{C}}\textrm{u}\check{\textrm{c}}\textrm{kovi}\acute{\textrm{c}}$
and Le \cite{CuLe} obtained results on finite rank commutators and semicommutators of harmonic Toeplitz operators on Bergman space of polyanalytic functions on the unit disk. On the contrast to the Hardy space and Bergman space cases, the conditions on harmonic Bergman space are quite different. Chen, Koo and Lee \cite{CKL} showed that the commutator of any two Toeplitz operators with general symbols can't
have an odd rank.

Obviously, only a few results of finite rank problem were known on both $L_a^2$ and $L_h^2$, and many seemingly simple and yet very complicated problems are still open. Take the commutator of monomial Toeplitz operators for example:
When the commutator $\left[T_{e^{ik_1\theta }r^{m_1}},T_{e^{ik_2\theta }r^{m_2}}\right]$ on $L_a^2$ (or on $L_h^2$) has finite rank remain to be solved. As one of interesting applications of our main theorem, we will give a complete characterization for it, including the range, the rank and the explicit canonical form of each finite rank commutator.

The paper is organized as follows. Motivated by \cite{CKL},
we first give some quite unexpected results in Section 2: The generalized semicommutator $T_{f_{1}}T_{f_{2}}-T_{f}$ with general symbols on $L_h^2$ has finite rank if and only if the anti-ordered generalized semicommutator $T_{f_{2}}T_{f_{1}}-T_{f}$ has finite rank. As a consequence, the finite rank generalized semicommutator $T_{f_{1}}T_{f_{2}}-T_{f}$ on $L_h^2$ implies that the commutator $\left[T_{f_{1}},T_{f_{2}}\right]$ must be finite rank. Obviously, this high correlation will be especially useful when we study the rank of the generalized semicommutator.

Returning to the quasihomogeneous symbol case, Section 3 contains some lemmas which will be used later.
In Sections 4, we discuss finite rank commutators of quasihomogeneous Toeplitz
operators on $L_h^2$. When one of symbols is the the form of $e^{ik\theta }r^{m}$ with $(k,m)\in \mathbb{Z}\times\left[-1, +\infty\right)$, the results have some aspects we wish to emphasize. Our results involve not only several specific necessary and sufficient conditions for commutators to be finite rank, but also the range, the rank and the explicit canonical form of the finite rank commutator in each condition. Much effort has gone into determining the range of each finite rank commutator. In fact, we first get the most likely range of finite rank commutator $\left[T_{e^{ik_1\theta} r^m},T_{e^{ik_2\theta }\varphi}\right]$. Then using the symmetry number of the commutator together masterly handling of the coefficient with the monotonicity of functions appeared in Lemma~\ref{lem G1} and Lemma~\ref{lem Gamma}, we verify that it is accurate.

As applications, we first give four different kinds of examples corresponding to the nonzero rank conditions of the main theorem. Then the finite rank commutator $\left[T_{e^{ik_1\theta }r^{k_1}},T_{e^{ik_2\theta }\varphi}\right]$ is considered, one of whose symbols is analytic or co-analytic. Also, the corresponding problem of the commutator $\left[T_{e^{ik_1\theta }r^{m_1}},T_{e^{ik_2\theta }r^{m_2}}\right]$ is studied.

If all similar questions are raised under the condition of generalized semicommutators  on $L_h^2$, we can also obtain the complete descriptions in Section 5. In fact,
using the results in Section 2 and in Section 4, it is relatively easy to get necessary conditions for generalized semicommutators to be finite rank, but a further analysis is needed to get corresponding results of generalized semicommutators, particularly to determine the range of each finite generalized semicommutator. Of cause, the finite rank problem of the semicommutator $\left(T_{e^{ik_1\theta} r^m},T_{e^{ik_2\theta }\varphi}\right]$, as an application of main theorem, is completely solved in this section.

In Section 6 we modify our arguments used in
previous sections to obtain corresponding results on the Bergman space. To end this paper we will show some connections between finite rank commutators (or generalized semicommutators) of quasihomogeneous Toeplitz
operators on the Bergman space and on the harmonic Bergman space.

\section{Rank equivalence of generalized semicommutators of Toeplitz Operators}

When dealing with the finite rank operators, the following notation is advantageous. For two nontrivial functions $\phi$ and $\psi$ in $L_h^2$, define the operator $\phi\otimes\psi$ of rank one to be
$$\left(\phi\otimes\psi\right)h=\langle h,\psi\rangle \phi, \ \ \ \ h\in L_h^2.$$
We list here some well known and easy properties of $\phi\otimes\psi$, which will be used later.
\begin{proposition}\label{prop rankone} For two nontrivial functions $\phi$ and $\psi$ in $L_h^2$, the following statements hold.
\begin{enumerate}
  \item[(a)] $Ran\left(\phi\otimes\psi\right)=\mathbb{C}\phi$ and $\ker\left(\phi\otimes\psi\right)=\left[\psi\right]^{\bot}$.
  \item[(b)] $\left[\phi\otimes \psi\right]^{*}=\psi\otimes \phi$. Here $\left[\phi\otimes \psi\right]^{*}$ denotes the adjoint operator of $\phi\otimes\psi$.
  \item[(c)] $\overline{\left(\phi\otimes \psi\right)(h)}=\left(\overline{\phi}\otimes \overline{\psi}\right)(\overline{h})$.
  \item[(d)] Every operator of finite rank $N\geq 1$ on $L_h^2$ has a canonical form: $\sum\limits_{j = 1}^N {\phi_j  \otimes \psi_j }$
for some linearly independent sets $\left\{\phi_1,\cdots,\phi_N\right\}$ and $\{\psi_1,\cdots,\psi_N\}$ in $L_h^2$.
\end{enumerate}
\end{proposition}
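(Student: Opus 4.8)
The plan is to obtain all four statements directly from the defining identity $(\phi\otimes\psi)h=\langle h,\psi\rangle\phi$, using only elementary Hilbert-space facts together with the one structural feature of $L_h^2$ that matters here, namely that it is closed under complex conjugation (the conjugate of a harmonic function being harmonic).

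For (a), since $\psi\not\equiv 0$ the scalar $\langle h,\psi\rangle$ runs over all of $\mathbb{C}$ as $h$ runs over $L_h^2$ (take $h=t\psi$), so $Ran(\phi\otimes\psi)=\mathbb{C}\phi$, which is automatically closed, being one-dimensional; and $(\phi\otimes\psi)h=0$ exactly when $\langle h,\psi\rangle=0$, i.e. $\ker(\phi\otimes\psi)=[\psi]^{\bot}$. For (b), I would verify $\langle(\phi\otimes\psi)h,g\rangle=\langle h,\psi\rangle\langle\phi,g\rangle=\langle h,\langle g,\phi\rangle\psi\rangle=\langle h,(\psi\otimes\phi)g\rangle$ for all $h,g\in L_h^2$ and read off $[\phi\otimes\psi]^{*}=\psi\otimes\phi$ from the definition of the adjoint. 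For (c), conjugating gives $\overline{(\phi\otimes\psi)h}=\overline{\langle h,\psi\rangle}\,\overline{\phi}=\langle\psi,h\rangle\,\overline{\phi}$, and the identity $\langle\psi,h\rangle=\int_D\psi\overline{h}\,dA=\langle\overline{h},\overline{\psi}\rangle$ identifies this with $(\overline{\phi}\otimes\overline{\psi})(\overline{h})$; conjugation-closedness of $L_h^2$ is exactly what guarantees that $\overline{\phi}\otimes\overline{\psi}$ is again an operator on $L_h^2$.

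For (d), given $A$ of rank $N\geq 1$, I would fix a basis $\{\phi_1,\dots,\phi_N\}$ of the (necessarily closed) subspace $Ran(A)$, write $Ah=\sum_{j=1}^{N}c_j(h)\phi_j$, note that each coordinate functional $c_j$ is bounded and linear, and invoke the Riesz representation theorem to produce $\psi_j\in L_h^2$ with $c_j(h)=\langle h,\psi_j\rangle$; this yields $A=\sum_{j=1}^{N}\phi_j\otimes\psi_j$. The only step requiring a short separate argument is the linear independence of $\{\psi_1,\dots,\psi_N\}$: by (b), $A^{*}=\sum_{j=1}^{N}\psi_j\otimes\phi_j$, so $Ran(A^{*})\subseteq\mathrm{span}\{\psi_1,\dots,\psi_N\}$, and since $\mathrm{rank}(A^{*})=\mathrm{rank}(A)=N$ this span must be exactly $N$-dimensional. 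That last point is the only mild obstacle; everything else is routine inner-product bookkeeping.
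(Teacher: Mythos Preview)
Your argument is correct in every part. The paper itself does not prove this proposition at all: it introduces the statements as ``well known and easy properties of $\phi\otimes\psi$'' and moves on, so there is no proof in the paper to compare against. Your write-up supplies exactly the routine Hilbert-space verifications the authors had in mind, including the one genuinely $L_h^2$-specific observation (closure under conjugation, needed for (c)) and a clean justification of the linear independence of the $\psi_j$ in (d) via $\operatorname{rank}(A^{*})=\operatorname{rank}(A)$.
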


We are now ready to state and prove our result about rank equivalence of generalized semicommutators of Toeplitz operators with general symbols.

\begin{theorem}\label{thm eqvi} Let $f_{1}$, $f_{2}$ and $f$ be T-functions on $D$. If the generalized semicommutator $T_{f_{1}}T_{f_{2}}-T_{f}$ has finite rank $N$ on $L_h^2$, then the anti-ordered generalized semicommutator $T_{f_{2}}T_{f_{1}}-T_{f}$ also has finite rank $N$.
\end{theorem}
\begin{proof}Since $R_a$ is real-valued for each
$a\in {D}$, we note $\overline{Q(g)}=Q(\overline{g})$ for every $g\in L^2(D,dA)$, and hence
\begin{align*}
\overline{\left(T_{f_{2}}T_{f_{1}}-T_{f}\right)(h)}&=Q(\overline{f_2}Q(\overline{f_{1}} \overline{h}))-Q(\overline{f} \overline{h})\\
&=\left(T_{\overline{f_{2}}}T_{\overline{f_{1}}}-T_{\overline{f}}\right)(\overline{h})=\left(T_{f_{1}}T_{f_{2}}-T_{f}\right)^*(\overline{h})
\end{align*}
for any $h\in L_h^2$.
Hence $\left(T_{f_{2}}T_{f_{1}}-T_{f}\right)(h)=\overline{\left(T_{f_{1}}T_{f_{2}}-T_{f}\right)^*(\overline{h})}.$

Suppose $T_{f_{1}}T_{f_{2}}-T_{f}$ has finite rank $N$. If $N=0$, then it follows that $T_{f_{2}}T_{f_{1}}-T_{f}=0$, as desired. So assume $N\geq1$.
We write
\begin{equation}\label{R}
    T_{f_{1}}T_{f_{2}}-T_{f}=\sum\limits_{j = 1}^N {\phi_j  \otimes \psi_j }
\end{equation}
for some linearly independent sets $\left\{\phi_1,\cdots,\phi_N\right\}$ and $\{\psi_1,\cdots,\psi_N\}$ in $L_h^2$.
Then by Proposition~\ref{prop rankone}, we get
\begin{align*}
\left(T_{f_{2}}T_{f_{1}}-T_{f}\right)(h)&=\sum\limits_{j = 1}^N\overline{ \left[{\phi_j  \otimes \psi_j }\right]^*(\overline{h})}=\sum\limits_{j = 1}^N\left[{\overline{\psi_j}  \otimes \overline{\phi_j} }\right]({h}),
\end{align*}
which implies
\begin{equation}\label{S}
T_{f_{2}}T_{f_{1}}-T_{f}=\sum\limits_{j = 1}^N{\overline{\psi_j}  \otimes \overline{\phi_j} }.
\end{equation}
So $T_{f_{2}}T_{f_{1}}-T_{f}$ also has finite rank $N$, as desired.
\end{proof}

Theorem~\ref{thm eqvi} yields the following interesting result, showing the connection of finite rank commutator and corresponding generalized semicommutator of Toeplitz operators on $L_h^2$.

\begin{corollary}\label{cor relation} Let $f_{1}$, $f_{2}$ and $f$ be T-functions on $D$. If the generalized semicommutator $T_{f_{1}}T_{f_{2}}-T_{f}$ has finite rank $N$ on $L_h^2$,
then the commutator $\left[T_{f_{1}},T_{f_{2}}\right]$ also has finite rank, and the rank is less than or equal to $2N$.
\end{corollary}
\begin{proof}
It follows from \eqref{R} and \eqref{S} that
$$\left[T_{f_1},T_{f_2}\right]=\sum\limits_{j = 1}^N{\left[\phi_j  \otimes \psi_j-\overline{\psi_j}  \otimes \overline{\phi_j} \right]}$$
for some linearly independent sets $\left\{\phi_1,\cdots,\phi_N\right\}$ and $\left\{\psi_1,\cdots,\psi_N\right\}$ in $L_h^2$. Observe that the set $\left\{\phi_1,\cdots,\phi_N, \overline{\psi_1},\cdots,\overline{\psi_N}\right\}$ may be linearly dependent, and the result follows.
\end{proof}

As a direct consequence of Theorem~\ref{thm eqvi} and Corollary~\ref{cor relation}, one can get the following interesting
corollary. In fact, a particular case of it when both $f_{1}$ and $f_{2}$ are quasihomogeneous has been proved in \cite{DZ4}, using quite different methods from those that we use here.

\begin{corollary} Let $f_{1}$ and $f_{2}$ be two T-functions on $D$. If there exists a T-function $f$ such that $T_{f_{1}}T_{f_{2}}=T_{f}$ on $L_h^2$,
then $T_{f_{2}}T_{f_{1}}=T_{f}$, and hence $T_{f_{1}}T_{f_{2}}=T_{f_{2}}T_{f_{1}}$.
\end{corollary}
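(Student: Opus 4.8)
The plan is to obtain the corollary as the $N=0$ specialization of Theorem~\ref{thm eqvi}. First I would observe that the hypothesis $T_{f_{1}}T_{f_{2}}=T_{f}$ is precisely the statement that the generalized semicommutator $T_{f_{1}}T_{f_{2}}-T_{f}$ is the zero operator, which has finite rank $N=0$. Theorem~\ref{thm eqvi} then applies verbatim and yields that $T_{f_{2}}T_{f_{1}}-T_{f}$ also has finite rank $0$, i.e.\ $T_{f_{2}}T_{f_{1}}-T_{f}=0$, which is exactly the claimed identity $T_{f_{2}}T_{f_{1}}=T_{f}$. Combining the two identities gives $T_{f_{1}}T_{f_{2}}=T_{f}=T_{f_{2}}T_{f_{1}}$, so the two Toeplitz operators commute; alternatively, this last conclusion also follows from Theorem~\ref{thm relation}, since the commutator $\left[T_{f_{1}},T_{f_{2}}\right]$ then has rank at most $2\cdot 0 = 0$.

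Since the argument is a pure specialization of the two preceding theorems, I do not expect any genuine obstacle; the only point worth checking is that passing through rank $0$ is legitimate, and it is, because the notion ``finite rank $N$'' in Theorem~\ref{thm eqvi} explicitly permits $N=0$ (indeed the proof of that theorem disposes of the case $N=0$ first). If a fully self-contained proof is preferred, one can instead reuse the conjugation identity $\left(T_{f_{2}}T_{f_{1}}-T_{f}\right)(h)=\overline{\left(T_{f_{1}}T_{f_{2}}-T_{f}\right)^{*}(\overline{h})}$ established within the proof of Theorem~\ref{thm eqvi}: if the operator inside the conjugation is the zero operator, then so is its adjoint, hence so is $T_{f_{2}}T_{f_{1}}-T_{f}$, giving $T_{f_{2}}T_{f_{1}}=T_{f}$ directly.
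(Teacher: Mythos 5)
Your proposal is correct and matches the paper's intent exactly: the paper presents this corollary as ``a direct consequence of Theorem~\ref{thm eqvi} and Theorem~\ref{thm relation},'' which is precisely your $N=0$ specialization. Your added remark that one can alternatively reuse the conjugation identity from the proof of Theorem~\ref{thm eqvi} is a fine self-contained shortcut but does not change the substance.
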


\section{Preliminary results}

We start this section with the concept of the Mellin transform.
For a radial function $f$ on $D$, the Mellin transform of $f$ is the function
$\widehat{f}$ defined by
$$\widehat{f}(z)=\int_0^1  {f  (r)r^{z - 1} dr}.$$ If $f \in
L^{1}([0,1],rdr)$, then
$\widehat{f }$ is well defined on the right half-plane $\left\{z:
\textrm{Re}z\geq 2\right\}$ and analytic on $\left\{z: \textrm{Re}z>
2\right\}$. For another radial function $g\in L^{1}([0,1],rdr)$, then their Mellin convolution is defined
by
$$(f\ast_{M}g)(r)=\int^1_{r}f\left(\frac{r}{t}\right)g(t)\frac{dt}{t},\;\;0\leq r<1.$$
The Mellin convolution theorem states that $\widehat{(f\ast_{M}g)}(z)=\widehat{f}(z)\widehat{g}(z)$.
Furthermore, it is known that $f\ast_{M}g$ is also in $L^{1}([0,1],rdr)$.

We shall often use the following lemma which is taken from \cite[Lemma 2.1]{DZ2}.

\begin{lemma}\label{lem calc} Let $k\in\mathbb{Z}$ and let $\varphi$ be a
radial T-function. Then on $L_h^2$, for each $n\in \mathbb{N}$ we have
\begin{align*}
&T_{ e^{ik\theta}\varphi }(z^{n})=\left\{ {\begin{array}{ll}
   {2(n+k+1)\widehat{\varphi }(2n+k+2)z^{n+k}}, & \text{if\; $n\geq -k$},  \\
   {2(-n-k+1)\widehat{\varphi }(-k+2)\overline{z}^{-n-k} },
   & \text{if\; $n<-k$;}  \\
\end{array}} \right.\\
&T_{ e^{ik\theta}\varphi }(\overline{z}^{n})=\left\{
{\begin{array}{ll}
   {2(n-k+1)\widehat{\varphi }(2n-k+2)\overline{z}^{n-k}}, & \text{if\; $n\geq k$},  \\
   {2(k-n+1)\widehat{\varphi }(k+2)z^{k-n} },
   & \text{if\; $n <k$.}
\end{array}} \right.
\end{align*}
\end{lemma}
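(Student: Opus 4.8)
The plan is to prove the two identities by a direct computation of the action of $T_{e^{ik\theta}\varphi}$ on the standard monomial basis of $L_h^2$. First I would record that $\|z^n\|^2=\|\overline{z}^{n}\|^2=\frac{1}{n+1}$ and that the analytic and anti-analytic monomials are mutually orthogonal (for $n\geq 0$, $m\geq 1$ the angular factor $\int_0^{2\pi}e^{i(n+m)\theta}\,d\theta$ vanishes), so that
$$\bigl\{\sqrt{n+1}\,z^{n}:\,n\geq 0\bigr\}\cup\bigl\{\sqrt{m+1}\,\overline{z}^{m}:\,m\geq 1\bigr\}$$
is an orthonormal basis of $L_h^2$. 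Since $e^{ik\theta}\varphi$ is a T-function, for a polynomial $h$ the definition \eqref{DeT} gives $T_{e^{ik\theta}\varphi}(h)=Q(e^{ik\theta}\varphi\,h)\in L_h^2$, so it suffices to compute the coefficients of $T_{e^{ik\theta}\varphi}(z^{n})$ against the basis above.

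Next I would expand. Fix $n\geq 0$ and set $g=e^{ik\theta}\varphi(r)z^{n}=e^{i(n+k)\theta}\varphi(r)r^{n}$, which lies in $L^1(D,dA)$ because $|g|\leq|\varphi|$. For a basis element $e\in L_h^2$ one has $Qe=e$, hence $\langle Qg,e\rangle=\langle g,e\rangle$, and this pairing makes sense since $g\in L^1$ and $e\in L^\infty$. Passing to polar coordinates, $\langle g,z^{m}\rangle$ carries the angular integral $\int_0^{2\pi}e^{i(n+k-m)\theta}\,d\theta$, which forces $\langle g,z^{m}\rangle=0$ unless $m=n+k$; when $n+k\geq 0$ the surviving value is $2\int_0^1\varphi(r)r^{2n+k+1}\,dr=2\widehat{\varphi}(2n+k+2)$. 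Similarly $\langle g,\overline{z}^{m}\rangle=\int_D g\,z^{m}\,dA$ carries $\int_0^{2\pi}e^{i(n+k+m)\theta}\,d\theta$, which vanishes unless $m=-(n+k)$ — possible only when $n+k\leq -1$ — and then equals $2\int_0^1\varphi(r)r^{-k+1}\,dr=2\widehat{\varphi}(-k+2)$. Multiplying each coefficient by the normalizing factor $(m+1)$ of the corresponding basis vector and reassembling, one gets $T_{e^{ik\theta}\varphi}(z^{n})=2(n+k+1)\widehat{\varphi}(2n+k+2)z^{n+k}$ when $n\geq -k$, and $T_{e^{ik\theta}\varphi}(z^{n})=2(-n-k+1)\widehat{\varphi}(-k+2)\overline{z}^{-n-k}$ when $n<-k$, which is the first assertion.

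For the action on $\overline{z}^{n}$ I would avoid repeating this and instead invoke the conjugation symmetry already used in the proof of Theorem~\ref{thm eqvi}: since each $R_a$ is real-valued, $\overline{Qf}=Q\overline{f}$, so
$$\overline{\,T_{e^{ik\theta}\varphi}(\overline{z}^{n})\,}=Q\bigl(e^{-ik\theta}\,\overline{\varphi(r)}\,z^{n}\bigr)=T_{e^{-ik\theta}\overline{\varphi}}(z^{n}),$$
and $\overline{\varphi}$ is again a T-function (conjugation is an antilinear isometry of $L_h^2$). Applying the first assertion with $k$ replaced by $-k$ and $\varphi$ by $\overline{\varphi}$, then conjugating back and using $\overline{\widehat{\overline{\varphi}}(s)}=\widehat{\varphi}(s)$ for real $s$, yields $T_{e^{ik\theta}\varphi}(\overline{z}^{n})=2(n-k+1)\widehat{\varphi}(2n-k+2)\overline{z}^{n-k}$ when $n\geq k$, and $T_{e^{ik\theta}\varphi}(\overline{z}^{n})=2(k-n+1)\widehat{\varphi}(k+2)z^{k-n}$ when $n<k$, which is the second assertion.

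I do not expect a genuine obstacle here; the computation is elementary. The only point requiring care is the index bookkeeping — in particular the asymmetry between the analytic basis (indices $m\geq 0$) and the anti-analytic basis (indices $m\geq 1$), which is exactly what produces the two borderline cases, together with keeping straight when the analytic versus the anti-analytic coefficient is the surviving one. It is also worth stating explicitly at the outset that $T_{e^{ik\theta}\varphi}(z^{n})=Q(e^{ik\theta}\varphi z^{n})$ holds on monomials by the very definition \eqref{DeT}, and that the T-function hypothesis guarantees the right-hand side lies in $L_h^2$, which legitimizes the basis expansion.
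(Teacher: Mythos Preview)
Your proof is correct. The paper itself does not give a proof of this lemma: it simply quotes the result from \cite[Lemma~2.1]{DZ2}, so there is no in-paper argument to compare against. Your approach --- expanding $Q(e^{ik\theta}\varphi\,z^n)$ in the orthonormal basis $\{\sqrt{m+1}\,z^m\}_{m\ge 0}\cup\{\sqrt{m+1}\,\overline{z}^m\}_{m\ge 1}$ via the polar-coordinate computation of the inner products, then deducing the $\overline{z}^n$ formula from the conjugation identity $\overline{T_{e^{ik\theta}\varphi}(\overline{z}^n)}=T_{e^{-ik\theta}\overline{\varphi}}(z^n)$ --- is the natural direct computation and is essentially what the cited reference does; the conjugation shortcut for the second half is a clean way to avoid repeating the calculation.
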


A direct calculation gives the following essential lemma.
\begin{lemma}\label{lem constrela}
Let $k,\;l\in\mathbb{Z}$ and let $\varphi$ be a
radial T-function. Then on $L_h^2$,
$$T_{ e^{ik\theta}\varphi }\left(r^{|l|}e^{il\theta}\right)=\lambda_{k,l}\
r^{|k+l|}e^{i(k+l)\theta}$$ for some constant $\lambda_{k,l}$. Moreover,
\begin{equation}\label{lamre}
\left(|-l|+1\right)\lambda_{k,l}=\left(|l+k|+1\right)\lambda_{k,-l-k}.
\end{equation}
\end{lemma}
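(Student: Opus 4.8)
The plan is to compute $T_{e^{ik\theta}\varphi}(r^{|l|}e^{il\theta})$ directly by reducing to the action on a monomial and applying Lemma~\ref{lem calc}, and then to extract the recursion \eqref{lamre} by comparing two ways of evaluating the same expression. First I would observe that $r^{|l|}e^{il\theta}$ is exactly the monomial $z^{l}$ when $l\ge 0$ and $\overline{z}^{-l}$ when $l<0$; in either case it lies in $L_h^2$, so Lemma~\ref{lem calc} applies. Treating the four sign cases for $l$ and for $l+k$ separately (or, more economically, writing $n=|l|$ and tracking whether $n\ge -k$, $n<-k$, $n\ge k$, $n<k$ as appropriate), each formula in Lemma~\ref{lem calc} produces a single term of the form (constant)$\cdot z^{m}$ or (constant)$\cdot\overline{z}^{m}$ with exponent $m=|k+l|$; this exponent bookkeeping is the routine part, and it yields $T_{e^{ik\theta}\varphi}(r^{|l|}e^{il\theta})=\lambda_{k,l}\,r^{|k+l|}e^{i(k+l)\theta}$ with $\lambda_{k,l}$ an explicit expression involving $\widehat{\varphi}$ evaluated at an integer depending on $k,l$ and the factors $2(n+k+1)$ etc.

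The substantive point is the identity \eqref{lamre}. The cleanest way I see to get it is to record the \emph{explicit} value of $\lambda_{k,l}$ coming out of Lemma~\ref{lem calc} in each regime, and likewise the explicit value of $\lambda_{k,-l-k}$, and then check the algebraic identity $(|l|+1)\lambda_{k,l}=(|l+k|+1)\lambda_{k,-l-k}$ by inspection. Note that the index $-l-k$ is chosen precisely so that $k+(-l-k)=-l$, i.e. $T_{e^{ik\theta}\varphi}$ sends $r^{|l+k|}e^{i(-l-k)\theta}$ back to a multiple of $r^{|l|}e^{-il\theta}$; the two scalars $\lambda_{k,l}$ and $\lambda_{k,-l-k}$ therefore involve $\widehat{\varphi}$ at arguments that coincide. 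Indeed, in the generic regime $l\ge -k$ one gets $\lambda_{k,l}=2(|l|+k+1)\widehat{\varphi}(2|l|+k+2)$ up to the sign conventions, and for the partner index the shift by $k$ turns $2|l+k|+\cdots$ into the same Mellin argument, while the front factors $2(\cdot+k+1)$ rearrange exactly into the claimed ratio. So \eqref{lamre} is the statement that these front linear factors, divided through by the common Mellin value, satisfy $(|l|+1)\cdot\frac{\lambda_{k,l}}{\widehat\varphi(\cdot)}=(|l+k|+1)\cdot\frac{\lambda_{k,-l-k}}{\widehat\varphi(\cdot)}$, which is a finite check over the sign cases.

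The main obstacle is purely organizational: there are several cases according to the signs of $l$ and $l+k$, and one must make sure the formula $T_{e^{ik\theta}\varphi}(r^{|l|}e^{il\theta})=\lambda_{k,l}r^{|k+l|}e^{i(k+l)\theta}$ and the identity \eqref{lamre} hold uniformly across all of them — in particular in the ``boundary'' regimes where Lemma~\ref{lem calc} switches branches (e.g. $n<-k$ versus $n\ge -k$), where the Mellin argument degenerates to $\widehat{\varphi}(-k+2)$ or $\widehat{\varphi}(k+2)$. I would handle this by splitting into the cases $l\ge 0,\ l+k\ge 0$; $l\ge 0,\ l+k<0$; $l<0,\ l+k\ge 0$; $l<0,\ l+k<0$, writing down $\lambda_{k,l}$ explicitly in each, and then verifying that the partner $\lambda_{k,-l-k}$ falls into the complementary case so that \eqref{lamre} reduces to an equality of products of the linear factors appearing in Lemma~\ref{lem calc}. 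Once the explicit formulas are on the table the verification is immediate, so I expect no genuine difficulty beyond careful case management.
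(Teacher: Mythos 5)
Your proposal is correct and follows essentially the same route as the paper: apply Lemma~\ref{lem calc} to get an explicit four-case formula for $\lambda_{k,l}$, observe that the case for the partner index $-l-k$ is the complementary one with the same Mellin argument $\widehat{\varphi}(\cdot)$, and verify \eqref{lamre} by rearranging the linear prefactors in each case. No gaps.
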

\begin{proof} By Lemma~\ref{lem calc}, it can easily be checked that
$T_{ e^{ik\theta}\varphi }\left(r^{|l|}e^{il\theta}\right)=\lambda_{k,l}\
r^{|k+l|}e^{i(k+l)\theta},$ where
\begin{equation}\label{laml}
 \lambda_{k,l}=\left\{ {\begin{array}{ll}
   {2(l+k+1)\widehat{\varphi }(2l+k+2)}, & \text{if\; $l\geq0$\; and\; $l\geq -k$, }  \\
   {2(-l-k+1)\widehat{\varphi }(-k+2)},
   & \text{if\; $l\geq0$\; and\; $l\leq-k$,}  \\
   {2(-l-k+1)\widehat{\varphi }(-2l-k+2)}, & \text{if\; $l\leq0$\; and\; $-l\geq k$},  \\
   {2(l+k+1)\widehat{\varphi }(k+2)},
   & \text{if\; $l\leq0$\; and\; $-l \leq k$.}
\end{array}} \right.
\end{equation}
Observe that
\begin{itemize}
\item $l\geq0$ and $l\geq -k$ $\Longleftrightarrow$ $-l-k\leq0$ and $l+k\geq k$;
\item $l\geq0$\; and\; $l\leq-k$ $\Longleftrightarrow$ $-l-k\geq0$ and $-l-k\leq -k$;
\item $l\leq0$\; and\; $-l\geq k$ $\Longleftrightarrow$ $-l-k\geq0$ and $-l-k\geq -k$;
\item $l\leq0$\; and\; $-l\leq k$ $\Longleftrightarrow$ $-l-k\leq0$ and $l+k\leq k$.
\end{itemize}
Then by \eqref{laml}, we get
\begin{equation}\label{laml2}
\lambda_{k,-l-k}=\left\{ {\begin{array}{ll}
   {2(l+1)\widehat{\varphi }(2l+k+2)}, & \text{if\; $l\geq0$\; and\; $l\geq -k$, } \\
   {2(l+1)\widehat{\varphi }(-k+2)},
   & \text{if\; $l\geq0$\; and\; $l\leq-k$,}  \\
   {2(-l+1)\widehat{\varphi }(-2l-k+2)}, & \text{if\; $l\leq0$\; and\; $-l\geq k$},  \\
   {2(-l+1)\widehat{\varphi }(k+2)},
   & \text{if\; $l\leq0$\; and\; $-l \leq k$.}
\end{array}} \right.
\end{equation}
Comparing \eqref{laml} with \eqref{laml2}, it is clear that \eqref{lamre} holds. This completes the proof.
\end{proof}

By complex and lengthy calculations, we proved the following technical lemma in \cite{DZ3}.
Using the critical equation \eqref{lamre},
now we can give a new and quite simple proof.

\begin{lemma}\label{lem sym}  Let $k_1,\;k_2\in\mathbb{Z}$, and let ${\varphi_{1}},\;{\varphi_{2}}$ be two radial T-functions. Then on $L_h^2$, for any $k\in \mathbb{Z}$ we have
\begin{align*}
&T_{e^{ik_1\theta} \varphi_{1}}T_{e^{ik_2\theta} \varphi_{2}}\left(r^{|k|}e^{ik\theta}\right)=T_{e^{ik_2\theta} \varphi_{2}}T_{e^{ik_1\theta} \varphi_{1}}\left(r^{|k|}e^{ik\theta}\right)\Longleftrightarrow\\
&T_{e^{ik_1\theta} \varphi_{1}}T_{e^{ik_2\theta} \varphi_{2}}\left(r^{|-k-k_1-k_2|}e^{i(-k-k_1-k_2)\theta}\right)=T_{e^{ik_2\theta} \varphi_{2}}T_{e^{ik_1\theta} \varphi_{1}}\left(r^{|-k-k_1-k_2|}e^{i(-k-k_1-k_2)\theta}\right).
\end{align*} Moreover, if $-k_1-k_2$ is even, then
$$T_{e^{ik_1\theta} \varphi_{1}}T_{e^{ik_2\theta} \varphi_{2}}\left(r^{|\frac{-k_1-k_2}{2}|}e^{\frac{i(-k_1-k_2)\theta}{2}}\right)\equiv
T_{e^{ik_2\theta} \varphi_{2}}T_{e^{ik_1\theta} \varphi_{1}}\left(r^{|\frac{-k_1-k_2}{2}|}e^{\frac{i(-k_1-k_2)\theta}{2}}\right).$$
\end{lemma}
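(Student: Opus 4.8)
The plan is to reduce the entire statement to one scalar identity via the weight-shift description of quasihomogeneous Toeplitz operators in Lemma~\ref{lem constrela}. For $i=1,2$ let $\lambda^{(i)}_{k_i,l}$ denote the constant $\lambda_{k_i,l}$ of Lemma~\ref{lem constrela} attached to $T_{e^{ik_i\theta}\varphi_i}$, so that $T_{e^{ik_i\theta}\varphi_i}\bigl(r^{|l|}e^{il\theta}\bigr)=\lambda^{(i)}_{k_i,l}\,r^{|k_i+l|}e^{i(k_i+l)\theta}$. Composing and applying Lemma~\ref{lem constrela} twice,
\begin{align*}
T_{e^{ik_1\theta}\varphi_1}T_{e^{ik_2\theta}\varphi_2}\bigl(r^{|k|}e^{ik\theta}\bigr)&=\lambda^{(2)}_{k_2,k}\,\lambda^{(1)}_{k_1,k+k_2}\,r^{|k+k_1+k_2|}e^{i(k+k_1+k_2)\theta},\\
T_{e^{ik_2\theta}\varphi_2}T_{e^{ik_1\theta}\varphi_1}\bigl(r^{|k|}e^{ik\theta}\bigr)&=\lambda^{(1)}_{k_1,k}\,\lambda^{(2)}_{k_2,k+k_1}\,r^{|k+k_1+k_2|}e^{i(k+k_1+k_2)\theta}.
\end{align*}
Since $r^{|k+k_1+k_2|}e^{i(k+k_1+k_2)\theta}\neq 0$, the commutation relation evaluated at $r^{|k|}e^{ik\theta}$ is equivalent to
\begin{equation*}
\lambda^{(2)}_{k_2,k}\,\lambda^{(1)}_{k_1,k+k_2}=\lambda^{(1)}_{k_1,k}\,\lambda^{(2)}_{k_2,k+k_1}.\tag{$\ast$}
\end{equation*}
Replacing $k$ by $l:=-k-k_1-k_2$ and using $l+k_2=-(k+k_1)$ and $l+k_1=-(k+k_2)$, the commutation relation at $r^{|l|}e^{il\theta}$ is likewise equivalent to
\begin{equation*}
\lambda^{(2)}_{k_2,-k-k_1-k_2}\,\lambda^{(1)}_{k_1,-k-k_1}=\lambda^{(1)}_{k_1,-k-k_1-k_2}\,\lambda^{(2)}_{k_2,-k-k_2}.\tag{$\ast\ast$}
\end{equation*}

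The key step is to pass between $(\ast\ast)$ and $(\ast)$ using \emph{only} the symmetry relation \eqref{lamre}, bypassing the case formula \eqref{laml}. Four applications of \eqref{lamre} give
\begin{align*}
\bigl(|k+k_1+k_2|+1\bigr)\lambda^{(2)}_{k_2,-k-k_1-k_2}&=\bigl(|k+k_1|+1\bigr)\lambda^{(2)}_{k_2,k+k_1}, & \bigl(|k+k_1|+1\bigr)\lambda^{(1)}_{k_1,-k-k_1}&=\bigl(|k|+1\bigr)\lambda^{(1)}_{k_1,k},\\
\bigl(|k+k_1+k_2|+1\bigr)\lambda^{(1)}_{k_1,-k-k_1-k_2}&=\bigl(|k+k_2|+1\bigr)\lambda^{(1)}_{k_1,k+k_2}, & \bigl(|k+k_2|+1\bigr)\lambda^{(2)}_{k_2,-k-k_2}&=\bigl(|k|+1\bigr)\lambda^{(2)}_{k_2,k}.
\end{align*}
Substituting these into $(\ast\ast)$, the factors $|k+k_1|+1$, $|k+k_2|+1$ and $|k+k_1+k_2|+1$ cancel in matching pairs, and $(\ast\ast)$ collapses to $\lambda^{(2)}_{k_2,k+k_1}\lambda^{(1)}_{k_1,k}=\lambda^{(1)}_{k_1,k+k_2}\lambda^{(2)}_{k_2,k}$, which is exactly $(\ast)$. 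This establishes the claimed equivalence.

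For the last assertion, note that $k\mapsto -k-k_1-k_2$ is an involution of $\mathbb{Z}$ whose unique fixed point, when $-k_1-k_2$ is even, is $k_0=\tfrac{-k_1-k_2}{2}$; at this value the two basis elements in the equivalence both coincide with $r^{|k_0|}e^{ik_0\theta}=r^{|\frac{-k_1-k_2}{2}|}e^{i(-k_1-k_2)\theta/2}$. Put $m=k_0+k_1$, so that $k_0+k_2=-m$. Two applications of \eqref{lamre}, via $m-k_1=-m-k_2=k_0$, give $\lambda^{(1)}_{k_1,-m}=\tfrac{|k_0|+1}{|m|+1}\lambda^{(1)}_{k_1,k_0}$ and $\lambda^{(2)}_{k_2,m}=\tfrac{|k_0|+1}{|m|+1}\lambda^{(2)}_{k_2,k_0}$. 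Substituting these into $(\ast)$ with $k=k_0$ makes both sides equal to $\tfrac{|k_0|+1}{|m|+1}\lambda^{(1)}_{k_1,k_0}\lambda^{(2)}_{k_2,k_0}$, so $(\ast)$ holds unconditionally; hence the two compositions agree at $r^{|\frac{-k_1-k_2}{2}|}e^{i(-k_1-k_2)\theta/2}$.

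The only delicate point is the bookkeeping of the $|\cdot|+1$ prefactors: the argument should be routed entirely through the single identity \eqref{lamre}, since the constants $\lambda^{(i)}_{k_i,l}$ of \eqref{laml} split into four sign-dependent cases and a direct case analysis would be unwieldy. Once this is arranged, the cancellations leading from $(\ast\ast)$ to $(\ast)$ and the collapse in the even case are immediate.
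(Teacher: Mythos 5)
Your proof is correct. The paper does not actually prove Lemma~\ref{lem sym} --- it only cites \cite{DZ3} for the argument --- so your write-up supplies a self-contained derivation from exactly the machinery the paper sets up immediately beforehand, namely the weight-shift description and the symmetry relation \eqref{lamre} of Lemma~\ref{lem constrela}. I checked the details: the reduction of commutation at $r^{|k|}e^{ik\theta}$ to the scalar identity $(\ast)$ is valid because both compositions land on the same nonzero basis monomial; the four instances of \eqref{lamre} (at $l=k$, $l=k+k_1$, $l=k+k_2$ for the appropriate symbol) are correctly stated, and since the cancellations only ever divide by the strictly positive factors $|\cdot|+1$ (never by a $\lambda$, which could vanish), the passage from $(\ast\ast)$ to $(\ast)$ is an honest equivalence. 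The fixed-point computation for the symmetry number, with $m-k_1=-m-k_2=k_0$, is also right, and it correctly shows $(\ast)$ holds unconditionally at $k_0$. This is almost certainly the same mechanism as in the cited reference, so the only thing you add beyond the paper is making the argument explicit --- which is a virtue here, since the reader otherwise has to chase \cite{DZ3}.
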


\begin{proof}
It follows from Lemma~\ref{lem constrela} that
\begin{align*}
&T_{e^{ik_1\theta} \varphi_{1}}T_{e^{ik_2\theta} \varphi_{2}}\left(r^{|k|}e^{ik\theta}\right)=T_{e^{ik_2\theta} \varphi_{2}}T_{e^{ik_1\theta} \varphi_{1}}\left(r^{|k|}e^{ik\theta}\right)\Longleftrightarrow
\lambda_{k_2,k}\lambda_{k_1,k+k_2}=\lambda_{k_1,k}\lambda_{k_2,k+k_1}\\
&\Longleftrightarrow
\frac{\left(|-k|+1\right)\lambda_{k_2,k}\left(|-k-k_2|+1\right)\lambda_{k_1,k+k_2}}{|-k-k_2|+1}=
\frac{\left(|-k|+1\right)\lambda_{k_1,k}\left(|-k-k_1|+1\right)\lambda_{k_2,k+k_1}}{|-k-k_1|+1}.
\end{align*}
By \eqref{lamre}, a direct calculation shows that the above equation is equivalent to
$$
\lambda_{k_2,-k-k_1-k_2}\lambda_{k_1,-k-k_1}=\lambda_{k_1,-k-k_1-k_2}\lambda_{k_2,-k-k_2}.
$$
Using Lemma~\ref{lem constrela} again, then the desired result follows immediately.
Moreover, if $k_1-k_2$ is even, then using \eqref{lamre} again, we get
\begin{align*}
    \lambda_{k_2,\frac{-k_1-k_2}{2}}\lambda_{k_1,\frac{-k_1+k_2}{2}}&=
\frac{\left(\left|\frac{k_1+k_2}{2}\right|+1\right)\lambda_{k_2,\frac{-k_1-k_2}{2}}
    \left(\left|\frac{k_1-k_2}{2}\right|+1\right)\lambda_{k_1,\frac{-k_1+k_2}{2}}}{\left(\left|\frac{k_1+k_2}{2}\right|+1\right)\left(\left|\frac{k_1-k_2}{2}\right|+1\right)}\\
    &=\lambda_{k_1,\frac{-k_1-k_2}{2}}\lambda_{k_2,\frac{k_1-k_2}{2}},
\end{align*}
which completes the proof.
\end{proof}

Recall that $\frac{-k_1-k_2}{2}$ is called the symmetry number of the commutator $\left[T_{e^{ik_1\theta} \varphi_{1}},T_{e^{ik_2\theta }\varphi_2}\right]$ in \cite{DZ3}. Since $\left\{\sqrt{|l|+1}r^{|l|}e^{il\theta}\right\}_{l\in\mathbb{Z}}$ is an
orthonormal basis for $L_h^2$, it will greatly simplify the analysis if we use this symmetry property to determine the range of the finite rank commutator two quasihomogeneous Toeplitz operators on $L^{2}_{h}$.

Lemma~\ref{lem constrela} implies that $T_{ e^{ik\theta}\varphi }$ acts on the elements of the orthogonal basis of $L_h^2$
as a weight shift operator. From this we can give the following lemma which shows the relation between the range and the canonical form of finite rank quasihomogeneous Toeplitz operators on $L^{2}_{h}$.

\begin{lemma}\label{lem comrank}  Let $T_{e^{ip_i\theta} \varphi_{i}},\ T_{e^{iq_i\theta }\psi_i}$ be quasihomogeneous Toeplitz operators on $L^{2}_{h}$ and $p_i+q_i=k$ for any $i=1,\cdots,i_0$. Then
\begin{equation}\label{ranforeq1}
Ran\left(\sum\limits_{i = 1}^{i_0}T_{e^{ip_i\theta} \varphi_{i}}T_{e^{iq_i\theta }\psi_i}\right)=
\emph{span}\left\{r^{|k_{(1)}|}e^{ik_{(1)}\theta}, r^{|k_{(2)}|}e^{ik_{(2)}\theta},\cdots, r^{|k_{(N)}|}e^{ik_{(N)}\theta}\right\}\end{equation}
if and only if
\begin{equation}\label{ranforeq2}
\sum\limits_{i = 1}^{i_0}T_{e^{ip_i\theta} \varphi_{i}}T_{e^{iq_i\theta }\psi_i}=\sum\limits_{j = 1}^{N} {C_{k_{(j)}} \left(r^{|k_{(j)}|}e^{ik_{(j)}\theta}\right)  \otimes \left(r^{|k_{(j)}-k|}e^{i\left(k_{(j)}-k\right)\theta}\right) }
\end{equation}
for some nonzero constant $C_{k_{(j)}}$.
\end{lemma}
\begin{proof} First suppose \eqref{ranforeq1} holds.
Notice that $p_i+q_i=k$, then by Lemma~\ref{lem constrela}, for any $l\in\mathbb{Z}$
$$\sum\limits_{i = 1}^{i_0}T_{e^{ip_i\theta} \varphi_{i}}T_{e^{iq_i\theta }\psi_i}\left(r^{|l|}e^{il\theta}\right)
=\left\{
{\begin{array}{ll}
   {0}, & \text{if\; $l\neq k_{(j)}-k$},  \\
   {C_{k_{(j)}}^{\prime}r^{|k_{(j)}|}e^{ik_{(j)}\theta}},
   & \text{if\; $l=k_{(j)}-k$},
\end{array}} \right.
$$ for some nonzero constant $C_{k_{(j)}}^{\prime}$,
where $j\in\{1,2,\cdots,N\}$. On the other hand,
\begin{align*}
&\left[\sum\limits_{j = 1}^{N} {\left(r^{|k_{(j)}|}e^{ik_{(j)}\theta}\right)  \otimes \left(r^{|k_{(j)}-k|}e^{i\left(k_{(j)}-k\right)\theta}\right) }\right]\left(r^{|l|}e^{il\theta}\right)\\
&=\sum\limits_{j = 1}^{N} {\left\langle r^{|l|}e^{il\theta}, r^{|k_{(j)}-k|}e^{i\left(k_{(j)}-k\right)\theta}\right\rangle r^{|k_{(j)}|}e^{ik_{(j)}\theta} }=\left\{
{\begin{array}{ll}
   {0}, & \text{if\; $l\neq k_{(j)}-k$},  \\
   {\frac{1}{{|k_{(j)}-k|+1}}r^{|k_{(j)}|}e^{ik_{(j)}\theta}},
   & \text{if\; $l=k_{(j)}-k$}.
\end{array}} \right.
\end{align*}
Thus, \eqref{ranforeq2} holds
for nonzero constant $C_{k_{(j)}}=\left(|k_{(j)}-k|+1\right)C_{k_{(j)}}^{\prime}$.

The converse implication is a direct consequence of Proposition~\ref{prop rankone}, finishing the proof.
\end{proof}

The following two lemmas play critical roles in next two sections. Before we state our results, we need to introduce concepts of the digamma function and the polygamma function.
The digamma function $\psi(z)$ is defined as the logarithmic derivative of the gamma function $\Gamma(z)$, that is:
$$\psi(z)=\frac{d}{dz}\log\Gamma(z)=\frac{\Gamma'(z)}{\Gamma(z)}.$$
The derivatives $\psi^{\prime},
\psi^{\prime\prime}, \psi^{\prime\prime\prime},\ldots$ are known as the
tri-, tetra-, pentagamma functions or, generally, the polygamma functions.
The polygamma function may be represented as
$$\psi^{(m)}(z)=(-1)^{m+1}\int_0^{+\infty}\frac{t^m}{1-e^{-t}}e^{-zt}dt$$
for $\textrm{Re} z >0$ and $m>0$. Obviously, $\psi^{\prime\prime}(s)<0$ for $s\in (0,+\infty)$, and hence $\psi^{\prime}$ is strictly monotone decreasing on $(0,+\infty)$. We refer to \cite[p.260]{AS} for the properties of these functions.

\begin{lemma}\label{lem G1} Let $b>0$, and let
$$
F(x)=
\frac{\Gamma\left(x\right)\Gamma\left(x+b-a\right)}{\Gamma\left(x-a\right)\Gamma\left(x+b\right)}.
$$
\begin{enumerate}
\item[(a)] If $a>0$, then $F(x)$ is strictly monotone increasing on $(a,+\infty)$.
\item[(b)]  If $a<0$, then $F(x)$ is strictly monotone decreasing on $(0,+\infty)$.
\end{enumerate}
\end{lemma}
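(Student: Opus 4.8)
The plan is to differentiate logarithmically and reduce the whole statement to the elementary fact recorded just before the lemma, namely that $\psi^{\prime}$ is strictly decreasing on $(0,+\infty)$ because $\psi^{\prime\prime}<0$ there. First I would check that on each of the two stated intervals all four arguments $x$, $x+b-a$, $x-a$, $x+b$ lie in $(0,+\infty)$, so that $F$ is positive and differentiable and the relevant values of $\psi,\psi^{\prime}$ are defined: if $a>0$ and $x>a$ this is clear since then $x+b-a>b>0$, while if $a<0$ and $x>0$ it is immediate because $-a>0$. On that domain
$$\frac{F^{\prime}(x)}{F(x)}=\psi(x)+\psi(x+b-a)-\psi(x-a)-\psi(x+b)=h(x)-h(x+b),$$
where I set $h(t):=\psi(t)-\psi(t-a)$. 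Since $F>0$, the sign of $F^{\prime}$ is exactly the sign of $h(x)-h(x+b)$, and $b>0$, so everything comes down to the monotonicity of $h$.

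Next I would compute $h^{\prime}(t)=\psi^{\prime}(t)-\psi^{\prime}(t-a)$ and split into the two cases. If $a>0$, then $t-a<t$, so strict decrease of $\psi^{\prime}$ gives $\psi^{\prime}(t-a)>\psi^{\prime}(t)$, hence $h^{\prime}(t)<0$; thus $h$ is strictly decreasing on the admissible range, and since $x<x+b$ we get $h(x)>h(x+b)$, i.e. $F^{\prime}(x)/F(x)>0$, whence $F^{\prime}>0$ on $(a,+\infty)$. This proves (a). If $a<0$, then $t-a>t$, so $\psi^{\prime}(t-a)<\psi^{\prime}(t)$ and $h^{\prime}(t)>0$; hence $h$ is strictly increasing, $h(x)<h(x+b)$, so $F^{\prime}(x)/F(x)<0$ and $F^{\prime}<0$ on $(0,+\infty)$, proving (b).

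There is no genuine obstacle here; the only points that need a little care are the domain check ensuring the gamma arguments stay positive (so that $F>0$ and $\psi,\psi^{\prime}$ are defined throughout the interval over which $h$ is compared at $x$ and $x+b$), and keeping the inequalities straight when $a$ changes sign. As an alternative to the derivative computation one could instead write $h(x)-h(x+b)=\bigl(\psi(x)-\psi(x+b)\bigr)-\bigl(\psi(x-a)-\psi(x+b-a)\bigr)$ and use the integral formula $\psi(y)-\psi(x)=\int_0^{+\infty}\frac{e^{-xt}-e^{-yt}}{1-e^{-t}}\,dt$, which turns $h(x)-h(x+b)$ into $-\int_0^{+\infty}\frac{(1-e^{-bt})e^{-xt}(1-e^{at})}{1-e^{-t}}\,dt$, whose sign is governed by that of $1-e^{at}$; but the argument via $\psi^{\prime\prime}<0$ is shorter and uses precisely the property the paper has just highlighted.
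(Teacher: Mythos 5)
Your proof is correct and follows essentially the same route as the paper: logarithmic differentiation of $F$, the auxiliary function $h(t)=\psi(t)-\psi(t-a)$, and the strict decrease of $\psi^{\prime}$ (from $\psi^{\prime\prime}<0$) to determine the sign of $h(x)-h(x+b)$. The domain check and the alternative integral-representation argument are nice additions but do not change the substance.
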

\begin {proof} First suppose $a>0$.
Define
\[
g(x):=\psi(x)+\psi(x+b-a)-\psi(x-a)-\psi(x+b)
\]
for $x\in (a,+\infty)$, where $\psi$ is the digamma function.

Now, we denote $h(x)=\psi(x)-\psi(x-a)$ for $x>a$. Since $a>0$, we get
$$h^{\prime}(x)=\psi^{\prime}(x)-\psi^{\prime}(x-a)<0,$$
and hence
$h(x)>h(x+b),\ \forall\ b>0.$
Consequently, $g(x)=h(x)-h(x+b)>0$ for all $x\in(a,+\infty)$. But note that
$
\frac{d}{dx}\log F(x)=g(x).
$
So $F(x)$ is strictly monotone increasing  on $(a,+\infty)$.

The proof of (b) is similar. This completes the proof.
\end{proof}

\begin{lemma}\label{lem Gamma} Let $b>0$, and let
\begin{equation}\label{G}
    G(x)=\frac{\Gamma\left(-x+a\right)\Gamma\left(-x+b+1\right)}
{\Gamma\left(-x+1\right)\Gamma\left(-x+b+a\right)}.
\end{equation}
\begin{enumerate}
\item[(a)] If $a\in(0,1)$, then $G(x)$ is strictly monotone increasing on $(-\infty,a)$.
\item[(b)] If $a\in(1,+\infty)$, then $G(x)$ is strictly monotone decreasing on $(-\infty,1)$.
\end{enumerate}
\end{lemma}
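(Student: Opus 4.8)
The plan is to follow the template of the proof of Lemma~\ref{lem G1}: pass to $\frac{d}{dx}\log G(x)$, express it through the digamma function, and invoke the strict monotonicity of $\psi'$ on $(0,+\infty)$ recorded in the paragraph preceding Lemma~\ref{lem G1}.

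First I would record that on the relevant intervals the function $G$ in \eqref{G} is well defined, smooth and strictly positive, because all four Gamma-arguments are positive there: writing $t=-x$, in case (a) we have $t\in(-a,+\infty)$ with $a\in(0,1)$, so $t+a>0$, $t+1>1-a>0$, $t+b+a>b>0$ and $t+b+1>0$; in case (b) we have $t\in(-1,+\infty)$ with $a>1$, so $t+a>a-1>0$ while the remaining three are clearly positive. Hence the monotonicity of $G$ is governed by the sign of $\frac{d}{dx}\log G(x)$.

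Next I would differentiate. Since $\frac{d}{dx}\log\Gamma(-x+c)=-\psi(-x+c)$,
$$\frac{d}{dx}\log G(x)=\bigl[\psi(-x+1)-\psi(-x+a)\bigr]-\bigl[\psi(-x+b+1)-\psi(-x+b+a)\bigr].$$
Putting $h(s)=\psi(s+1)-\psi(s+a)$ and $t=-x$, this reads $h(t)-h(t+b)$. Because $h'(s)=\psi'(s+1)-\psi'(s+a)$ and $\psi'$ is strictly decreasing, the sign of $h'$ depends only on the order of $s+1$ and $s+a$: in case (a), $a<1$ forces $h'(s)<0$, so $h$ is strictly decreasing and $h(t)-h(t+b)>0$ for every $b>0$, giving that $G$ is strictly increasing on $(-\infty,a)$; in case (b), $a>1$ forces $h'(s)>0$, so $h$ is strictly increasing and $h(t)-h(t+b)<0$, giving that $G$ is strictly decreasing on $(-\infty,1)$. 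One must keep track of domains so that $h$ is evaluated only where both $\psi(s+1)$ and $\psi(s+a)$ make sense, i.e. $s+a>0$; the positivity checks of the previous step supply exactly this, and the restriction of $x$ to $(-\infty,a)$ (resp. $(-\infty,1)$) is precisely what is needed.

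Finally, I would remark that the statement can alternatively be deduced directly from Lemma~\ref{lem G1} by the substitution $x\mapsto -x+1$ together with $a\mapsto 1-a$: a short computation identifies $G(x)$ with the reciprocal of the function $F$ of Lemma~\ref{lem G1}, evaluated at $-x+1$ and with the parameter $1-a$ in place of $a$. Since $x\mapsto -x+1$ reverses orientation and passing to reciprocals of a positive increasing (resp. decreasing) function reverses monotonicity, these two operations together convert Lemma~\ref{lem G1}(a),(b) into exactly the conclusions (a),(b) here; the parameter ranges $a\in(0,1)$ and $a\in(1,+\infty)$ correspond respectively to $1-a>0$ and $1-a<0$. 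Either way, the only non-routine point is the bookkeeping of the parameter ranges, so I expect no genuine obstacle beyond that.
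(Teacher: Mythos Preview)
Your proof is correct and follows essentially the same route as the paper: both compute $\frac{d}{dx}\log G(x)$ in terms of the digamma function, group it as a difference $h(t)-h(t+b)$ (the paper writes this with $x$ fixed and $y$ varying, you with $s=t$ varying, but the content is identical), and conclude from the strict monotonicity of $\psi'$. Your added positivity check on the Gamma arguments and the alternative derivation from Lemma~\ref{lem G1} via $x\mapsto -x+1$, $a\mapsto 1-a$ are both correct and are nice supplements the paper does not include.
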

\begin{proof} First suppose $a\in(0,1)$.
Define
\[
g(x):=\psi(-x+a)+\psi(-x+b+1)-\psi(-x+1)-\psi(-x+b+a )
\]
for $x\in (-\infty,a)$, where $\psi$ is the digamma function.

Fix $x$ and denote $h(y)=\psi(-x+y+1)-\psi(-x+y+a)$ for $y\geq 0$. Since $a<1$, we get
$$h^{\prime}(y)=\psi^{\prime}(-x+y+1)-\psi^{\prime}(-x+y+a)<0,$$
Recall that $b>0$ and therefore
$h(b)<h(0)$,
which implies that $$g(x)=h(b)-h(0)<0$$ for all $x\in(-\infty,a)$. But note that
$
\frac{d}{dx}\log G(x)=-g(x).
$
So $G(x)$ is strictly monotone increasing on $(-\infty,a)$.

The proof of (b) is similar. This completes the proof.
\end{proof}

Inspired by \cite{CR}, we give the following lemma, which gives the sufficient and necessary conditions for the function appeared in Section 4 to be a T-function.
\begin{lemma}\label{lem vak}  Let $k_1,\;k_2\in\mathbb{Z}$ such that $k_1>0$, and let $m\in \mathbb{R}$, $m\geq -1$. Assume $\varphi(r)$ is a radial function on $D$ such that
\begin{equation}\label{gk}
\widehat{\varphi}(z)=\frac{\Gamma\left(\frac{z+k_2}{2k_1}\right)\Gamma\left(\frac{z+m+k_1-k_2}{2k_1}\right)}{\Gamma\left(\frac{z+2k_1-k_2}{2k_1}\right)
\Gamma\left(\frac{z+m+k_1+k_2}{2k_1}\right)}.
\end{equation}
Then $\varphi(r)$ is a T-function if and only if one of the following conditions holds:
\begin{enumerate}
\item[(1)]$k_2\leq-2$ and $m+k_1=0$.
\item[(2)]$-2<k_2<m+k_1+2$.
\item[(3)]$k_2\geq m+k_1+2$ and $m=(2n+1)k_1$ for some $n\in\mathbb{N}$.
\end{enumerate}
\end{lemma}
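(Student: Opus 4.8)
The plan is to reduce the whole statement to the location of the poles of the meromorphic function $\widehat\varphi$ given by the right-hand side of \eqref{gk}. Since $\varphi$ is radial, a direct integral computation (the $k=0$ case of Lemma~\ref{lem calc}, valid for any radial $\psi\in L^1([0,1],r\,dr)$) shows that the operator $h\mapsto Q(\psi h)$, densely defined on polynomials, acts diagonally on the orthonormal basis $\bigl\{\sqrt{|l|+1}\,r^{|l|}e^{il\theta}\bigr\}_{l\in\mathbb Z}$ of $L_h^2$, sending $r^{|l|}e^{il\theta}$ to $2(|l|+1)\widehat\psi(2|l|+2)\,r^{|l|}e^{il\theta}$. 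Hence $\varphi$ is a T-function if and only if $\varphi\in L^1([0,1],r\,dr)$ and the numbers $a_n:=2(n+1)\widehat\varphi(2n+2)$, $n\ge0$, are bounded. Writing the numerator arguments of \eqref{gk} as $A=\frac{z+k_2}{2k_1}$ and $B=\frac{z+m+k_1-k_2}{2k_1}$ (so that the denominator arguments are $C=A+\alpha$ and $D=B+1-\alpha$ with $\alpha=1-\frac{k_2}{k_1}$), the elementary asymptotics $\Gamma(x+u)/\Gamma(x+v)\sim x^{u-v}$ as $x\to+\infty$ give $a_n\to2k_1$; so boundedness of $(a_n)$ is automatic once each $a_n$ is finite. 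Therefore $\varphi$ is a T-function if and only if $\varphi\in L^1([0,1],r\,dr)$, and --- since the Mellin transform of an $L^1([0,1],r\,dr)$ function is finite on $\{\operatorname{Re}z\ge2\}$ --- this forces \eqref{gk} to have no pole in $\{\operatorname{Re}z\ge2\}$; I will show this condition is also sufficient, the construction of the $L^1$ representative being the substantial half.

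For the pole analysis, poles of \eqref{gk} can only come from the numerator factors $\Gamma(A)$ and $\Gamma(B)$. Their rightmost candidates in $\{\operatorname{Re}z\ge2\}$ are $z_A:=-k_2$ (present only if $k_2\le-2$) and $z_B:=k_2-m-k_1$ (present only if $k_2\ge m+k_1+2$), and since $m+k_1\ge0$ these two cases never occur together. Computing the denominator arguments there gives $D(z_A)=\frac{m+k_1}{2k_1}$ and $C(z_B)=\frac{k_1-m}{2k_1}$; because $m+k_1\ge0$, the first is a non-positive integer only when $m+k_1=0$, while the second is a non-positive integer precisely when $m=(2n+1)k_1$ for some $n\in\mathbb N$. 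A further check by substitution that in all other configurations no denominator factor --- and no second numerator factor --- has a matching pole shows that the pole at $z_A$ survives iff $k_2\le-2$ and $m+k_1\ne0$, and the pole at $z_B$ survives iff $k_2\ge m+k_1+2$ and $m\ne(2n+1)k_1$ for every $n$. Taking into account the boundary cases $k_2=-2$ and $k_2=m+k_1+2$, this says exactly that \eqref{gk} has a pole in $\{\operatorname{Re}z\ge2\}$ if and only if none of $(1)$--$(3)$ holds; in particular, when none of $(1)$--$(3)$ holds there is no $L^1([0,1],r\,dr)$ representative, so $\varphi$ is not a T-function, which proves necessity.

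For sufficiency I would exhibit $\varphi$ in each case. In case $(1)$, $m+k_1=0$ forces $D=A$ and $C=B+1$, so \eqref{gk} collapses to $\widehat\varphi(z)=\frac{2k_1}{z-k_2}$, i.e.\ $\varphi(r)=2k_1r^{-k_2}$, which is bounded since $-k_2\ge2$. In case $(3)$, the relations $B-C=\frac{m-k_1}{2k_1}=n$ and $D-A=\frac{m+k_1}{2k_1}=n+1$ turn \eqref{gk} into a rational function of $z$ all of whose poles are at $z=-k_2-2k_1i<0$; a partial-fraction expansion exhibits $\varphi$ as a polynomial in $r$ with no constant term, again bounded. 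In case $(2)$, where $-2<k_2<m+k_1+2$, both rightmost poles $z_A,z_B$ already lie in $\{\operatorname{Re}z<2\}$, and I would realize $\varphi$ as a Mellin convolution of elementary functions of the form $r^{c}(1-r^{2k_1})^{d}$, using the Mellin convolution theorem together with $\widehat{r^{c}(1-r^{2k_1})^{d}}(z)=\frac{\Gamma(d+1)}{2k_1}\cdot\frac{\Gamma((z+c)/(2k_1))}{\Gamma((z+c)/(2k_1)+d+1)}$; which of the two pairings of the Gamma-factors in \eqref{gk} (namely $\frac{\Gamma(A)}{\Gamma(C)}\cdot\frac{\Gamma(B)}{\Gamma(D)}$ or $\frac{\Gamma(A)}{\Gamma(D)}\cdot\frac{\Gamma(B)}{\Gamma(C)}$) one uses depends on the relative sizes of $k_1$, $k_2$, $m$. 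This produces a nearly bounded $\varphi\in L^1([0,1],r\,dr)$, which by the first paragraph is a T-function.

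I expect the main obstacle to be sufficiency in case $(2)$: no single convolution factorization of the above type works throughout the region $-2<k_2<m+k_1+2$, so one must split into sub-cases according to the signs --- and integrality --- of the exponents $d$ that appear, and in the borderline sub-cases absorb a polynomial factor by differentiating a convolution, using $z\widehat h(z)=h(1)-\widehat{rh'}(z)$. An alternative is to argue directly by Mellin inversion, but then the borderline decay $\widehat\varphi(z)=O(|\operatorname{Im}z|^{-1})$ along vertical lines has to be handled with care. The rest --- the two boundary cases $k_2=-2$ and $k_2=m+k_1+2$, and the verification that no non-rightmost pole ever intrudes into $\{\operatorname{Re}z\ge2\}$ --- is routine.
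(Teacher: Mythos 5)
Your proposal is correct and follows essentially the same route as the paper: necessity by locating the poles of the Gamma--quotient in $\{\operatorname{Re}z\ge 2\}$ and asking when a denominator factor cancels them, and sufficiency via the explicit formula $\varphi=2k_1r^{-k_2}$ in case $(1)$, the proper rational $\widehat{\varphi}$ with only negative poles in case $(3)$, and the Mellin--convolution construction of \cite{CR} in case $(2)$ --- the paper likewise disposes of case $(2)$ by citing the proof of \cite[Theorem 4]{CR}, so both arguments defer exactly the same technical work there. Two remarks on the differences. First, your cancellation criterion (the denominator argument $D(z_A)=\frac{m+k_1}{2k_1}$, resp.\ $C(z_B)=\frac{k_1-m}{2k_1}$, must be a non-positive integer, which with $m+k_1\ge 0$ forces $m+k_1=0$, resp.\ $m=(2n+1)k_1$) reaches the conclusion more directly than the paper's partial-fraction computations, and your reduction ``T-function $\Longleftrightarrow$ $\varphi\in L^{1}([0,1],r\,dr)$'' via the diagonal action on the basis and the asymptotics $2(n+1)\widehat{\varphi}(2n+2)\to 2k_1$ is a genuine (if minor) streamlining of the paper's appeal to near-boundedness; both are sound. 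Second, the sentence ``no non-rightmost pole ever intrudes into $\{\operatorname{Re}z\ge2\}$'' is false as stated: for $k_1=1$, $k_2=-10$ the factor $\Gamma\bigl(\frac{z+k_2}{2k_1}\bigr)$ has candidate poles at $z=2,4,6,8,10$. What is true --- and what your explicit computations in cases $(1)$ and $(3)$ already establish --- is that once the rightmost pole is cancelled the whole quotient collapses to a rational function all of whose poles lie in $\{\operatorname{Re}z<0\}$, while in case $(2)$ every pole of either numerator factor lies strictly to the left of $\operatorname{Re}z=2$; with that rephrasing your argument is complete modulo the \cite{CR}-type construction you correctly identify as the substantial remaining step.
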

\begin{proof} First assume $k_2\leq-2$. If $\varphi(r)$ is a T-function on $D$, then $\varphi(r)\in L^1(D, dA)$, and hence $\widehat{\varphi}(z)$ must be well defined on $\left\{z:\textrm{Re}z\geq 2\right\}$.
Therefore, either $\frac{2k_1-2k_2}{2k_1}$ or $\frac{m+k_1}{2k_1}$ is an integer. Otherwise, $-k_2$ must be a pole of $\widehat{\varphi}(z)$ in $\left\{z:\textrm{Re}z\geq 2\right\}$, which leads to a contradiction.

If $\frac{2k_1-2k_2}{2k_1}$ is an integer, then we can suppose $-k_2=(n+1)k_1$ for some $n\in \mathbb{N}$, since $k_1>0$ and $k_2\leq-2$. Thus, \eqref{gk} becomes
\begin{align*}
\widehat{\varphi}(z)&=\frac{\Gamma\left(\frac{z+k_2}{2k_1}\right)\Gamma\left(\frac{z+m+k_1+k_2}{2k_1}+n+1\right)}
{\Gamma\left(\frac{z+k_2}{2k_1}+n+2\right)\Gamma\left(\frac{z+m+k_1+k_2}{2k_1}\right)}\\
&=\frac{\frac{z+m+k_1+k_2}{2k_1}\left(\frac{z+m+k_1+k_2}{2k_1}+1\right)\cdots\left(\frac{z+m+k_1+k_2}{2k_1}+n\right)}
{\frac{z+k_2}{2k_1}\left(\frac{z+k_2}{2k_1}+1\right)\cdots\left(\frac{z+k_2}{2k_1}+n+1\right)  }.
\end{align*}
So $\widehat{\varphi}(z)$ is a proper fraction and can be written as a sum of partial fractions $\sum\limits_{j=0}^{n+1}{\frac{A_j}{z+k_2+2k_1j}}$.
Since $k_2\leq-2$ and $\widehat{\varphi}(z)$ is well defined on $\left\{z:\textrm{Re}z\geq 2\right\}$, it follows that $A_0=0$, which implies
$$\frac{m+k_1+k_2}{2k_1}+i=\frac{k_2}{2k_1}$$ must hold for some $i\in\left\{0,1,\cdots,n\right\}$. Since $m\geq-1$ and $k_1>0$, we have $m+k_1=0$.

If $\frac{m+k_1}{2k_1}$ is an integer, then we can suppose $m=(2n-1)k_1$ for some $n\in \mathbb{N}$. If $m+k_1\neq0$, then $n\geq1$. Thus, \eqref{gk} becomes
\begin{align*}
\widehat{\varphi}(z)&
=\frac{\left(\frac{z-k_2}{2k_1}+1\right)\left(\frac{z-k_2}{2k_1}+2\right)\cdots\left(\frac{z-k_2}{2k_1}+n-1\right)}
{\frac{z+k_2}{2k_1}\left(\frac{z+k_2}{2k_1}+1\right)\cdots\left(\frac{z+k_2}{2k_1}+n-1\right)}=\sum\limits_{j=0}^{n-1}{\frac{A_j}{z+k_2+2k_1j}}.
\end{align*}
Notice that $\frac{-k_2}{2k_1}+i>\frac{k_2}{2k_1}$ for all $i\in\left\{0,1,\cdots,n-1\right\}$, so $A_0\neq0$, which contradicts the fact that $\widehat{\varphi}(z)$ is well defined on $\left\{z:\textrm{Re}z\geq 2\right\}$.
Thus $m+k_1=0$.

Conversely, assume $m+k_1=0$. Then \eqref{gk} becomes $\widehat{\varphi}(z)=\frac{\Gamma\left(\frac{z+k_2}{2k_1}\right)\Gamma\left(\frac{z-k_2}{2k_1}\right)}{\Gamma\left(\frac{z+2k_1-k_2}{2k_1}\right)
\Gamma\left(\frac{z+k_2}{2k_1}\right)}=\frac{2k_1}{z-k_2},$ which yields $\varphi(r)=2k_1r^{-k_2}$, and hence $\varphi(r)$ is a T-function.

Next, assume $-2<k_2<m+k_1+2$. Then it is easy to check that $\widehat{\varphi}(z)$ is well defined on $\left\{z:\textrm{Re}z\geq 2\right\}$ in this particular situation. Furthermore, if $\widehat{\varphi}(z)$ is a rational function, then it is obvious that $\varphi(r)$ is a T-function. Now, we suppose $\widehat{\varphi}(z)$ is a irrational function.
Then using the same reasoning as in the proof of \cite[Theorem 4]{CR}, one can show that $\varphi(r)\in L^1(D, dA)$ and $\varphi(r)$ is "nearly bounded", which imply $\varphi(r)$ is a T-function.

Finally, assume $k_2\geq m+k_1+2$. Suppose that $\varphi(r)$ is a T-function on $D$. Similar as before, then either $\frac{-2k_2}{2k_1}$
or $\frac{m-k_1}{2k_1}$ is an integer. If $\frac{-2k_2}{2k_1}$ is an integer, then we can suppose $k_2=(n+2)k_1$ for some $n\in \mathbb{N}$, since $k_2>k_1>0$. Thus by a simple calculation, we obtain from \eqref{gk} that
\begin{align*}
\widehat{\varphi}(z)&=\frac{\frac{z+2k_1-k_2}{2k_1}\left(\frac{z+2k_1-k_2}{2k_1}+1\right)\cdots\left(\frac{z+2k_1-k_2}{2k_1}+n\right)}
{\frac{z+m+k_1-k_2}{2k_1}\left(\frac{z+m+k_1-k_2}{2k_1}+1\right)\cdots\left(\frac{z+m+k_1-k_2}{2k_1}+n+1\right)  }=\sum\limits_{j=0}^{n+1}{\frac{A_j}{z+m+k_1-k_2+2k_1j}}.
\end{align*}
Since $m+k_1-k_2\leq-2$, it follows that $A_0=0$, which implies
$$\frac{2k_1-k_2}{2k_1}+i=\frac{m+k_1-k_2}{2k_1}$$ must hold for some $i\in\left\{0,1,\cdots,n\right\}$, and hence $m=(2i+1)k_1.$

On the other hand, if $\frac{m-k_1}{2k_1}$ is an integer, then we can suppose $m=(2n+1)k_1$ for some integer $n\geq -1$, since $m\geq-k_1$.
However, if $n=-1$, then $m+k_1=0$ and \eqref{gk} becomes
$$\widehat{\varphi}(z)=\frac{\Gamma\left(\frac{z-k_2}{2k_1}\right)}{\Gamma\left(\frac{z-k_2}{2k_1}+1\right)}=\frac{2k_1}{z-k_2},$$ which contradicts the fact that $\widehat{\varphi}(z)$ is well defined on $\left\{z:\textrm{Re}z\geq 2\right\}$
since $k_2\geq m+k_1+2=2$.

Conversely, assume $m=(2n+1)k_1$ for some $n\in\mathbb{N}$. Then \eqref{gk} becomes
\begin{align*}
\widehat{\varphi}(z)&=\frac{\Gamma\left(\frac{z+k_2}{2k_1}\right)\Gamma\left(\frac{z-k_2}{2k_1}+n+1\right)}{\Gamma\left(\frac{z-k_2}{2k_1}+1\right)
\Gamma\left(\frac{z+k_2}{2k_1}+n+1\right)}=\frac{\left(\frac{z-k_2}{2k_1}+1\right)\left(\frac{z-k_2}{2k_1}+2\right)\cdots\left(\frac{z-k_2}{2k_1}+n\right)}
{\frac{z+k_2}{2k_1}\left(\frac{z+k_2}{2k_1}+1\right)\cdots\left(\frac{z+k_2}{2k_1}+n\right)  }.
\end{align*}
Clearly, $\varphi(r)$ is a T-function. This completes the proof.
\end{proof}

\section{Ranks of commutators on $L_h^2$}

In this section,
we will discuss the finite rank commutators of quasihomogeneous Toeplitz
operators on $L_h^2$. Throughout the rest part of this paper, we will use the following notations for brevity. For $k_1,\;k_2\in\mathbb{Z}$, we
denote $$N_1=\max\left\{0,-k_1,-k_2,-k_1-k_2\right\}.$$
Then we write
$${\Lambda_1}=\left\{k\in{\mathbb{Z}}: -N_1+1\leq k\leq N_1+k_1+k_2-1, k\neq \frac{k_1+k_2}{2}\right\},$$
and $${\Lambda_{\frac{1}{2}}}=\left\{k\in{\mathbb{Z}}: \frac{k_1+k_2}{2}< k \leq N_1+k_1+k_2-1\right\}.$$
First we characterize the finite rank commutators of two Toeplitz
operators with general quasihomogeneous symbols.
\begin{proposition}\label{prop commutemax}  Let $k_1,\;k_2\in\mathbb{Z}$, and let ${\varphi_{1}}$ and ${\varphi_{2}}$ be two radial T-functions. Then the commutator $\left[T_{e^{ik_1\theta} \varphi_{1}},T_{e^{ik_2\theta }\varphi_2}\right]$ has finite rank on $L_h^2$ if and only if
\begin{align}\label{cmax}
    &2(n+k_2+1)\widehat{\varphi_1
}(2n+k_1+2k_2+2)\widehat{\varphi_2
}(2n+k_2+2)\nonumber\\
&=2(n+k_1+1)\widehat{\varphi_1
}(2n+k_1+2)\widehat{\varphi_2
}(2n+2k_1+k_2+2)
\end{align}
holds for any natural number $n\geq N_1$. In this case,
\begin{align}\label{rangec}
Ran\left(\left[T_{e^{ik_1\theta} \varphi_{1}},T_{e^{ik_2\theta }\varphi_2}\right]\right) &=
\emph{span}\left\{r^{|k_{(1)}|}e^{ik_{(1)}\theta}, r^{|k_{(2)}|}e^{ik_{(2)}\theta},\cdots, r^{|k_{(N)}|}e^{ik_{(N)}\theta},\right.\\
& \left.r^{|-k_{(1)}+k_1+k_2|}e^{i\left(-k_{(1)}+k_1+k_2\right)\theta},\cdots, r^{|-k_{(N)}+k_1+k_2|}e^{i\left(-k_{(N)}+k_1+k_2\right)\theta}\right\}\nonumber
\end{align}
for some $k_{(1)},\cdots, k_{(N)}\in \Lambda_{\frac{1}{2}}$,
and
\begin{align}\label{pair}
     \left[T_{e^{ik_1\theta} \varphi_{1}},T_{e^{ik_2\theta }\varphi_2}\right]&=\sum\limits_{j = 1}^{N} {C_{k_{(j)}}\left[ \left(r^{|k_{(j)}|}e^{ik_{(j)}\theta}\right)  \otimes \left(r^{|k_{(j)}-k_1-k_2|}e^{i\left(k_{(j)}-k_1-k_2\right)\theta}\right)\right.}\nonumber\\
    &\ \ \ \ \ \ \ \ \ \ \ \ \ \ \ {\left.- \left(r^{|-k_{(j)}+k_1+k_2|}e^{i\left(-k_{(j)}+k_1+k_2\right)\theta}\right)\otimes \left(r^{|-k_{(j)}|}e^{-ik_{(j)}\theta}\right) \right]}
\end{align}
for some nonzero constant $C_{k_{(j)}}$. Furthermore,
$rank\left(\left[T_{e^{ik_1\theta} \varphi_{1}},T_{e^{ik_2\theta }\varphi_2}\right]\right)\leq|k_1|+|k_2|-1$ when $k_1+k_2$ is odd, and $rank\left(\left[T_{e^{ik_1\theta} \varphi_{1}},T_{e^{ik_2\theta }\varphi_2}\right]\right)\leq\max\left\{0,|k_1|+|k_2|-2\right\}$ when $k_1+k_2$ is even.
\end{proposition}
\begin{proof}
For any natural number $n\geq N_1$, by Lemma~\ref{lem calc} we get
\begin{align}\label{cl1}
    &\left[T_{e^{ik_1\theta} \varphi_1},T_{e^{ik_2\theta }\varphi_2}\right](z^n)\nonumber\\
    &=2(n+k_1+k_2+1)\left[2(n+k_2+1)\widehat{\varphi_1}(2n+k_1+2k_2+2)\widehat{\varphi_2}(2n+k_2+2)\right.\nonumber\\
    &\ \ \ \left.-2(n+k_1+1)\widehat{\varphi_1 }(2n+k_1+2)\widehat{\varphi_2}(2n+2k_1+k_2+2)\right]z^{n+k_1+k_2}.
\end{align}
Suppose that $\left[T_{e^{ik_1\theta} \varphi_1},T_{e^{ik_2\theta }\varphi_2}\right]$ has finite rank. Then the set $\left\{\left[T_{e^{ik_1\theta} \varphi_1},T_{e^{ik_2\theta }\varphi_2}\right](z^n):n\geq N_1\right\}$ contains finite
linearly independent vectors, and hence there exists some natural number $N_0\geq N_1$ such that
$$\left[T_{e^{ik_1\theta} \varphi_1},T_{e^{ik_2\theta }\varphi_2}\right](z^n)=0,\  \forall \ n\geq N_0.$$
Consequently, it follows from \eqref{cl1} that \eqref{cmax} holds for any $ n\geq N_0$.
According to the basic properties of Mellin transform, \eqref{cmax} can be written as
\begin{align*}
    &\widehat{\left(r^{2k_1+2N_1}\right)}(2n-2N_1+2)\widehat{\left(r^{k_1+2k_2+2N_1}\varphi_1\right)}(2n-2N_1+2)\widehat{\left(r^{k_2+2N_1}\varphi_2\right)}(2n-2N_1+2)\\
    &=\widehat{\left(r^{2k_2+2N_1}\right)}(2n-2N_1+2)\widehat{\left(r^{k_1+2N_1}\varphi_1\right)}(2n-2N_1+2)\widehat{\left(r^{2k_1+k_2+2N_1}\varphi_2\right)}(2n-2N_1+2).
\end{align*}
Since the sequence $\left(2n-2N_1+2\right)_{n\geq N_0}$ is arithmetic, then as in the proof of \cite[Theorem 6]{CuL}, we can get the above equation holds for any $n\geq N_1$, and hence \eqref{cmax} holds for any $ n\geq N_1$.

Conversely, assume \eqref{cmax} holds for any $ n\geq N_1$. Then by \eqref{cl1}, we get
$$\left[T_{e^{ik_1\theta} \varphi_1},T_{e^{ik_2\theta }\varphi_2}\right](z^n)=0,\  \forall \ n\geq N_1.$$
Furthermore, it follows from Lemma~\ref{lem sym} that
$$\left[T_{e^{ik_1\theta} \varphi_1},T_{e^{ik_2\theta }\varphi_2}\right](\overline{z}^{n+k_1+k_2})=0,\ \forall\  n\geq N_1,$$
and if $-k_1-k_2$ is even, then $$\left[T_{e^{ik_1\theta} \varphi_1},T_{e^{ik_2\theta }\varphi_2}\right]\left(r^{|\frac{-k_1-k_2}{2}|}e^{\frac{i(-k_1-k_2)\theta}{2}}\right)=0.$$
Therefore,
\begin{align*}
    &Ran\left(\left[T_{e^{ik_1\theta}\varphi_1 },T_{e^{ik_2\theta }\varphi_2}\right]\right)\\
    &=\emph{span}\left\{\left[T_{e^{ik_1\theta}\varphi_1 },T_{e^{ik_2\theta }\varphi_2}\right]\left(r^{|l|}e^{il\theta}\right):l\in{\mathbb{Z}}, -N_1-k_1-k_2+1\leq l\leq N_1-1, l\neq \frac{-k_1-k_2}{2}\right\}.
    \end{align*}
Now, using Lemma~\ref{lem constrela} and the definition of $\Lambda_1$, it yields
\begin{align}\label{subrange}
    Ran\left(\left[T_{e^{ik_1\theta}\varphi_1 },T_{e^{ik_2\theta }\varphi_2}\right]\right)
    &\subseteq \emph{span}\left\{r^{|k|}e^{ik\theta}:k\in{{\Lambda_1}}\right\},
\end{align}
which shows that the commutator $\left[T_{e^{ik_1\theta} \varphi_{1}},T_{e^{ik_2\theta }\varphi_2}\right]$ has finite rank.
Moreover, it follows from Lemma~\ref{lem sym} that,
if $r^{|k_{(j)}|}e^{ik_{(j)}\theta}\in Ran\left(\left[T_{e^{ik_1\theta} \varphi_{1}},T_{e^{ik_2\theta }\varphi_2}\right]\right)$ for some $k_{(j)}\in \Lambda_1 $, then $$r^{|-k_{(j)}+k_1+k_2|}e^{i(-k_{(j)}+k_1+k_2)\theta}\in Ran\left(\left[T_{e^{ik_1\theta} \varphi_{1}},T_{e^{ik_2\theta }\varphi_2}\right]\right).$$
Therefore, the monomials in $Ran\left(\left[T_{e^{ik_1\theta} \varphi_{1}},T_{e^{ik_2\theta }\varphi_2}\right]\right)$ must always appear in pairs. Notice that the set $\Lambda_1$ is also symmetric, and hence
\eqref{rangec} must hold for some $k_{(1)},\cdots, k_{(N)}\in \Lambda_{\frac{1}{2}}$.

Next, we want to determine the canonical form of the finite rank commutator $\left[T_{e^{ik_1\theta}\varphi_1 },T_{e^{ik_2\theta }\varphi_2}\right]$.
Obviously, by Lemma~\ref{lem comrank}, it follows from \eqref{rangec} that
 \begin{align}\label{decompose}
     \left[T_{e^{ik_1\theta} \varphi_{1}},T_{e^{ik_2\theta }\varphi_2}\right]
    &=\sum\limits_{j = 1}^{N} {C_{k_{(j)}} \left(r^{|k_{(j)}|}e^{ik_{(j)}\theta}\right)  \otimes \left(r^{|k_{(j)}-k_1-k_2|}e^{i\left(k_{(j)}-k_1-k_2\right)\theta}\right)}\nonumber\\
    &+C_{-k_{(j)}+k_1+k_2}\left(r^{|-k_{(j)}+k_1+k_2|}e^{i\left(-k_{(j)}+k_1+k_2\right)\theta}\right)\otimes \left(r^{|-k_{(j)}|}e^{-ik_{(j)}\theta}\right)
\end{align}
for nonzero constant $C_{k_{(j)}}=\left(|k_{(j)}-k_1-k_2|+1\right)C_{k_{(j)}}^{\prime}$,
where $C_{k_{(j)}}^{\prime}$ is the coefficient satisfying $$\left[T_{e^{ik_1\theta}\varphi_1 },T_{e^{ik_2\theta }\varphi_2}\right]\left(r^{|k_{(j)}-k_1-k_2|}e^{i\left(k_{(j)}-k_1-k_2\right)\theta}\right)=C_{k_{(j)}}^{\prime}r^{|k_{(j)}|}e^{ik_{(j)}\theta},$$ and nonzero  constant $C_{-k_{(j)}+k_1+k_2}=\left(|-k_{(j)}|+1\right)C_{-k_{(j)}+k_1+k_2}^{\prime}$, where $C_{-k_{(j)}+k_1+k_2}^{\prime}$ satisfies $$\left[T_{e^{ik_1\theta}\varphi_1 },T_{e^{ik_2\theta }\varphi_2}\right]\left(r^{|-k_{(j)}|}e^{-ik_{(j)}\theta}\right)=C_{-k_{(j)}+k_1+k_2}^{\prime}r^{|-k_{(j)}+k_1+k_2|}e^{i\left(-k_{(j)}+k_1+k_2\right)\theta}.$$
According to Lemma~\ref{lem constrela}, it follows
$C_{k_{(j)}}^{\prime}=\lambda_{k_{2},k_{(j)}-k_1-k_2}\ \lambda_{k_{1},k_{(j)}-k_1}-\lambda_{k_{1},k_{(j)}-k_1-k_2}\ \lambda_{k_{2},k_{(j)}-k_2}$
and
$C_{-k_{(j)}+k_1+k_2}^{\prime}=\lambda_{k_{2},-k_{(j)}}\ \lambda_{k_{1},-k_{(j)}+k_2}-\lambda_{k_{1},-k_{(j)}}\ \lambda_{k_{2},-k_{(j)}+k_1}.$
Then by \eqref{lamre}, it can easily be checked that
$$\left(|k_{(j)}-k_1-k_2|+1\right)\lambda_{k_{2},k_{(j)}-k_1-k_2}\ \lambda_{k_{1},k_{(j)}-k_1}=\left(|-k_{(j)}|+1\right)\lambda_{k_{1},-k_{(j)}}\ \lambda_{k_{2},-k_{(j)}+k_1}$$
and $$\left(|k_{(j)}-k_1-k_2|+1\right)\lambda_{k_{1},k_{(j)}-k_1-k_2}\ \lambda_{k_{2},k_{(j)}-k_2}=\left(|-k_{(j)}|+1\right)\lambda_{k_{2},-k_{(j)}}\ \lambda_{k_{1},-k_{(j)}+k_2},$$
which implies
$$\left(|k_{(j)}-k_1-k_2|+1\right)C_{k_{(j)}}^{\prime}=-\left(|-k_{(j)}|+1\right)C_{-k_{(j)}+k_1+k_2}^{\prime},$$
and consequently,
$C_{k_{(j)}}=-C_{-k_{(j)}+k_1+k_2}$.
Then it follows from \eqref{decompose} that \eqref{pair} holds.

Finally, observe that $$2N_1+(k_1+k_2)=|k_1|+|k_2|,$$ so \eqref{subrange} implies that $rank\left(\left[T_{e^{ik_1\theta} \varphi_{1}},T_{e^{ik_2\theta }\varphi_2}\right]\right)\leq |k_1|+|k_2|-1$ when $k_1+k_2$ is odd, and $rank\left(\left[T_{e^{ik_1\theta} \varphi_{1}},T_{e^{ik_2\theta }\varphi_2}\right]\right)\leq \max\left\{0,|k_1|+|k_2|-2\right\}$ when $k_1+k_2$ is even, as desired.
\end{proof}

\begin{remark}
\begin{enumerate}
\item It is obvious that the finite rank commutator $\left[T_{e^{ik_1\theta} \varphi_{1}},T_{e^{ik_2\theta }\varphi_2}\right]$ can't have an odd rank, which corresponds to the result of \cite{CKL}. Moreover, \eqref{pair} implies that the canonical form of finite rank commutator of quasihomogeneous Toeplitz operators must exist always in pairs.
\item One can also obtain further result by applying the adjoint operator. Notice that $$\left[T_{e^{ik_1\theta} \varphi_{1}},T_{e^{ik_2\theta }\varphi_2}\right]^{\ast}=-\left[T_{e^{-ik_1\theta} \overline{\varphi_{1}}},T_{{e^{-ik_2\theta }\overline\varphi_2}}\right],$$ then it follows from \eqref{pair} and Proposition~\ref{prop rankone} that
\begin{align*}
\left[T_{e^{-ik_1\theta} \overline{\varphi_{1}}},T_{{e^{-ik_2\theta }\overline\varphi_2}}\right]&=\sum\limits_{j = 1}^{N} \overline{C_{k_{(j)}}}\left[ \left(r^{|-k_{(j)}|}e^{-ik_{(j)}\theta}\right)  \otimes \left(r^{|-k_{(j)}+k_1+k_2|}e^{i\left(-k_{(j)}+k_1+k_2\right)\theta}\right)\right.\\
    &\ \ \ \ \ \ \ \ \ \ \ \ \ \ \ \ \ {\left.- \left(r^{|k_{(j)}-k_1-k_2|}e^{i\left(k_{(j)}-k_1-k_2\right)\theta}\right)\otimes \left(r^{|k_{(j)}|}e^{ik_{(j)}\theta}\right) \right]}.
\end{align*}
\end{enumerate}
\end{remark}

The following corollary gives a complete description of the finite rank commutators of
quasihomogeneous Toeplitz operators with some special
degrees.

\begin{corollary}\label{cor cradial}  Let $k\in\mathbb{Z}$, and let ${\varphi_{1}},\;{\varphi_{2}}$ be two nonzero radial T-functions. Then on $L_h^2$, the following statements hold.
\begin{enumerate}
\item[(a)] The commutator $\left[T_{\varphi_{1}},T_{e^{ik\theta }\varphi_2}\right]$ has finite rank if and only if $k=0$ or ${\varphi_{1}}$ is a constant.
\item[(b)] If $k\neq0$, then the commutator $\left[T_{e^{ik\theta }\varphi_{1}},T_{e^{ik\theta }\varphi_2}\right]$ has finite rank if and only if $\varphi_{1}=C\varphi_{2}$ for some constant $C$.
\item[(c)] If $k\neq0$, then the commutator $\left[T_{e^{ik\theta }\varphi_{1}},T_{e^{-ik\theta }\varphi_2}\right]$ has finite rank if and only if $|k|=1$ and $\varphi_{1}\ast_{M}\varphi_{2}=C(r\ast_{M}r^{-1})$ for some
constant $C$.
\end{enumerate}
In each condition, the rank of the commutator is zero.
\end{corollary}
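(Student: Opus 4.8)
The plan is to feed each of the three degree configurations into Theorem~\ref{thm commutemax}, which says that $[T_{e^{ik_1\theta}\varphi_1},T_{e^{ik_2\theta}\varphi_2}]$ has finite rank exactly when \eqref{cmax} holds for every $n\ge N_1$; substituting the prescribed degrees collapses \eqref{cmax} to a much weaker-looking identity, and the task is to show this identity forces the stated structural condition on $\varphi_1,\varphi_2$, which in turn makes the commutator vanish outright (so the rank is always zero). The reverse implications are immediate: for $k=0$ both symbols are radial, so $T_{\varphi_1}$ and $T_{\varphi_2}$ are simultaneously diagonal in the basis $\{r^{|l|}e^{il\theta}\}$ by Lemma~\ref{lem calc} and commute; if $\varphi_1$ is constant then $T_{\varphi_1}$ is a scalar multiple of the identity; if $\varphi_1=C\varphi_2$ then $T_{e^{ik\theta}\varphi_1}=CT_{e^{ik\theta}\varphi_2}$; and in (c) with $|k|=1$, once \eqref{cmax} is checked directly from $\widehat{\varphi_1}\widehat{\varphi_2}(z)=C/(z^2-1)$, the bound $\mathrm{rank}\le\max\{0,|k_1|+|k_2|-2\}=0$ in Theorem~\ref{thm commutemax} gives the zero rank. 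Throughout, the adjoint relation $[T_{e^{ik_1\theta}\varphi_1},T_{e^{ik_2\theta}\varphi_2}]^{*}=-[T_{e^{-ik_1\theta}\overline{\varphi_1}},T_{e^{-ik_2\theta}\overline{\varphi_2}}]$ together with interchanging the two operators lets me reduce, in each case, to the situation where the relevant index is positive.

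For the forward direction of (a), take $k_1=0$, $k_2=k>0$, so $N_1=0$ and \eqref{cmax} becomes $\widehat{\varphi_2}(2n+k+2)\big[(n+k+1)\widehat{\varphi_1}(2n+2k+2)-(n+1)\widehat{\varphi_1}(2n+2)\big]=0$ for all $n\ge 0$, the factor $\widehat{\varphi_2}(2n+k+2)$ being common precisely because $k_1=0$. Rather than dividing pointwise (which would stumble on the possible zeros of $\widehat{\varphi_2}$ along the progression), I would rewrite both bracketed terms using $\widehat{r^{a}g}(z)=\widehat g(z+a)$, $\widehat 1(z)=1/z$ and $\widehat{r^{2k}}(z)=1/(z+2k)$ as Mellin transforms of Mellin convolutions of $L^{1}([0,1],rdr)$ functions; since such a function is determined by the values of its Mellin transform along any arithmetic progression in $\{\mathrm{Re}\,z\ge 2\}$, the resulting equality along $\{2,4,6,\dots\}$ upgrades to an equality of convolutions, and then, reverting to Mellin transforms and using the identity theorem on the connected domain $\{\mathrm{Re}\,z>2\}$ to cancel the common factor $\widehat{r^{k}\varphi_2}=\widehat{\varphi_2}(\cdot+k)\not\equiv 0$, one gets $(z+2k)\widehat{\varphi_1}(z+2k)=z\widehat{\varphi_1}(z)$ there. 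Iterating this and evaluating along $\{2+2jk\}_{j\ge 0}$ gives $(2+2jk)\widehat{\varphi_1}(2+2jk)=2\widehat{\varphi_1}(2)$; that is, $\widehat{\varphi_1}$ agrees with the Mellin transform of the constant function $2\widehat{\varphi_1}(2)$ along this progression, so $\varphi_1$ equals that constant a.e. The same scheme handles (b): with $k_1=k_2=k>0$ the common factor in \eqref{cmax} is the nonzero scalar $n+k+1$, and after removing it and running the convolution/determinacy/identity-theorem argument one arrives at $\widehat{\varphi_1}(w+2k)\widehat{\varphi_2}(w)=\widehat{\varphi_1}(w)\widehat{\varphi_2}(w+2k)$ on $\{\mathrm{Re}\,w>2\}$; multiplying the versions of this at $w,w+2k,\dots,w+2(j-1)k$ and cancelling intermediate factors (identity theorem again, using $\widehat{\varphi_1}\widehat{\varphi_2}\not\equiv 0$) yields $\widehat{\varphi_1}(w+2jk)\widehat{\varphi_2}(w)=\widehat{\varphi_1}(w)\widehat{\varphi_2}(w+2jk)$ for all $j\ge 1$, and evaluating at a base point $w_0\ge 2$ with $\widehat{\varphi_2}(w_0)\ne 0$ gives $\widehat{\varphi_1}=C\widehat{\varphi_2}$ along an arithmetic progression, hence $\varphi_1=C\varphi_2$.

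Case (c) is where the real work lies. Take $k_1=k>0$, $k_2=-k$; then $N_1=k$, and with $\Phi:=\varphi_1\ast_M\varphi_2$ (so $\widehat\Phi=\widehat{\varphi_1}\widehat{\varphi_2}$, $\Phi\in L^1([0,1],rdr)$, and $\Phi\not\equiv 0$ since $\widehat{\varphi_1}\widehat{\varphi_2}\not\equiv 0$) \eqref{cmax} reads $(n-k+1)\widehat\Phi(2n-k+2)=(n+k+1)\widehat\Phi(2n+k+2)$ for $n\ge k$. Writing $\beta_m=\widehat\Phi(2m+k+2)$ this is the recursion $(m+1)\beta_m=(m+2k+1)\beta_{m+k}$ ($m\ge 0$), which telescopes to $(n+1)(n+k+1)\beta_n=(m+1)(m+k+1)\beta_m$ whenever $n\equiv m\pmod k$; hence $(n+1)(n+k+1)\widehat\Phi(2n+k+2)=B_m$ depends only on the residue $m\in\{0,\dots,k-1\}$. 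Now using $\frac1{(n+1)(n+k+1)}=\frac1k\int_0^1 t^{n}(1-t^{k})\,dt$ together with the substitution $t=\rho^{2}$ in $\widehat\Phi(2n+k+2)=\int_0^1\Phi(\rho)\rho^{2n+k+1}d\rho$, followed by $u=t^{k}$, the identity says that, after multiplying by a fixed positive power of $u$, the $L^1([0,1])$ function $\Phi(u^{1/(2k)})u^{1/2}-\frac{2B_m}{k}(1-u)$ is orthogonal to all $u^{j}$, hence is $0$ a.e., so $\Phi(\rho)=\frac{2B_m}{k}(\rho^{-k}-\rho^{k})$ a.e. for every $m$. The decisive observation is then the integrability dichotomy $\rho^{-k}-\rho^{k}\in L^1([0,1],rdr)\iff k=1$: for $k\ge 2$ it forces every $B_m=0$, hence $\Phi\equiv 0$, contradicting $\Phi\not\equiv 0$; so $|k|=1$, and then $\Phi=2B(\rho^{-1}-\rho)=C\,(r\ast_M r^{-1})$ with $C=4B$, using $r\ast_M r^{-1}=\tfrac12(r^{-1}-r)$. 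I expect this integrability step — the fact that the forced symbol $\rho^{-k}-\rho^{k}$ leaves $L^1([0,1],rdr)$ as soon as $k\ge 2$ — to be the main obstacle; the other recurring delicate point is to carry out every cancellation of a common Mellin factor through the identity theorem on $\{\mathrm{Re}\,z>2\}$ rather than by naive pointwise division, since $\widehat{\varphi_2}$ (or $\widehat{\varphi_1}\widehat{\varphi_2}$) may well vanish at individual points of the progression in play.
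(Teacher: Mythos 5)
Your proposal is correct, and its skeleton is the same as the paper's: feed the three degree configurations into Theorem~\ref{thm commutemax} and solve the resulting Mellin functional equations. The difference is that the paper disposes of those functional equations almost entirely by citation (Proposition 6 of \cite{LZ1} for (a), Equation (3.4) of \cite{DZ3} for (b), Proposition 3.4(b) of \cite{DZ3} plus a \cite{CR}-style argument for (c), and \cite{DZ3} again for sufficiency), whereas you solve them from scratch, and in case (c) by a genuinely different mechanism. Your convolution/arithmetic-progression/identity-theorem cancellations in (a) and (b) are exactly the manipulations the paper itself performs inside the proof of Theorem~\ref{thm commutemax}, so nothing is lost by refusing to divide pointwise through possible zeros of $\widehat{\varphi_2}$. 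In (c) the paper telescopes to the rational identity $(z-k)(z+k)\widehat{\varphi_1}(z)\widehat{\varphi_2}(z)=C$ and excludes $|k|\geq 2$ because $z=k$ would be a pole of $\widehat{\varphi_1\ast_M\varphi_2}$ in $\left\{z:\textrm{Re}\,z\geq 2\right\}$; your moment/orthogonality computation instead produces the explicit formula $\varphi_1\ast_M\varphi_2=\frac{2B}{k}\left(\rho^{-k}-\rho^{k}\right)$ and excludes $|k|\geq 2$ because $\rho^{-k}\notin L^{1}([0,1],r\,dr)$ — the same obstruction viewed on the other side of the Mellin transform, but your version is self-contained. Deriving rank zero in (c) from the parity bound $\max\left\{0,|k_1|+|k_2|-2\right\}=0$ of Theorem~\ref{thm commutemax} is likewise a clean substitute for the paper's citation. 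Two cosmetic remarks: the weight $u^{(m+1)/k-1}$ you introduce in (c) has a negative exponent when $m<k-1$ (still greater than $-1$, which is all the $L^{1}$ and orthogonality argument needs, so this is only a slip of wording); and the evaluation at the base point $z=2$ in (a) needs either continuity of the Mellin transform up to $\textrm{Re}\,z=2$ or a base point strictly inside the half-plane — either repair is immediate.
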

\begin{proof} First we suppose $\left[T_{\varphi_{1}},T_{e^{ik\theta }\varphi_2}\right]$ has finite rank, then Proposition~\ref{prop commutemax} shows that
\begin{align*}
    &2(n+k+1)\widehat{\varphi_1}(2n+2k+2)\widehat{\varphi_2}(2n+k+2)=2(n+1)\widehat{\varphi_1}(2n+2)\widehat{\varphi_2}(2n+k+2)
\end{align*}
holds for any natural number $n\geq \max\left\{0,-k\right\}$. Using the same reasoning as in the proof of \cite[Proposition 6]{LZ1}, one can easily get that either $k=0$ or ${\varphi_{1}}$ is a constant.

Next, suppose $\left[T_{e^{ik\theta}\varphi_{1}}, T_{e^{ik\theta}\varphi_{2}}\right]$ has finite rank. Without loss of generality, we can assume that $k>0$,
for otherwise we could take the adjoints.
Then Proposition~\ref{prop commutemax} shows that
$$\widehat{\varphi_1}(2n+3k+2)\widehat{\varphi_2 }(2n+k+2) =\widehat{\varphi_1}(2n+k+2)\widehat{\varphi_2 }(2n+3k+2)$$
for any $n\in
\mathbb{N}$, which is the same as Equation (3.4) of \cite{DZ3}, and hence $\varphi_{1}=C\varphi_{2}$.

Finally, suppose $\left[T_{e^{ik\theta}\varphi_{1}}, T_{e^{-ik\theta}\varphi_{2}}\right]$ has finite rank. Similarly, we assume $k>0$. Then Proposition~\ref{prop commutemax} shows that
\begin{align*}
    &(2n-2k+2)(2n+2)\widehat{\varphi_1}(2n-k+2)\widehat{\varphi_2 }(2n-k+2)\\
    &=(2n+2)(2n+2k+2)\widehat{\varphi_1}(2n+k+2)\widehat{\varphi_2 }(2n+k+2)
\end{align*}
for any natural number $n\geq k$. We now proceed as in the proof of condition (b) of \cite[Proposition 3.4]{DZ3},
then $(z-k)(z+k)\widehat{\varphi_1}(z)\widehat{\varphi_2 }(z)=C$ for some nonzero constant
$C$.
On the other hand, since ${\varphi_{1}},\;{\varphi_{2}}\in L^{1}([0,1],rdr)$, we get ${\varphi_{1}}\ast_{M}{\varphi_{2}}\in L^{1}([0,1],rdr).$ Therefore
$$\widehat{{\varphi_{1}}\ast_{M}{\varphi_{2}}}(z)=\widehat{\varphi_{1}}(z)\widehat{\varphi_{2}}(z)=\frac{C}{(z-k)(z+k)}$$ must be well defined on $\left\{z:
\textrm{Re}z\geq 2\right\}$, which implies $k=1$ and $\varphi_{1}\ast_{M}\varphi_{2}=C(r\ast_{M}r^{-1})$.

The sufficiency of conditions (a)-(c) is an immediate consequence of \cite[Proposition 3.4]{DZ3}. Furthermore, the commutator is zero in each condition. This completes the proof.
\end{proof}

Now we are ready to state and prove our main theorem in this section, which completely solves the finite rank problem of the commutator $\left[T_{e^{ik_1\theta} r^m},T_{e^{ik_2\theta }\varphi}\right]$ on $L_h^2$.

\begin{theorem}\label{thm commute} Let $k_1,\;k_2\in\mathbb{Z}$, and let $m\in \mathbb{R}$, $m\geq -1$.
Then for a nonzero radial T-function $\varphi(r)$ on $D$, the commutator $\left[T_{e^{ik_1\theta} r^m},T_{e^{ik_2\theta }\varphi}\right]$ has finite rank on $L_h^2$ if and only if one of the following conditions holds:
\begin{enumerate}
\item[(1)]$k_1=m=0$.
\item[(2)]$k_2=0$ and $\varphi=C$.
\item[(3)]$k_1=k_2=0$.
\item[(4)]$k_1=k_2\neq0$ and $\varphi=Cr^m$.
\item[(5)]$k_1k_2=-1$ and $\varphi=C\left(\frac{m+1}{2}r^{-1}-\frac{m-1}{2}r\right)$.
\item[(6)]$k_1k_2<-1$, $|k_2|\geq 2$, $m+|k_1|=0$ and ${\varphi}(r)=Cr^{|k_2|}$.
\item[(7)]$k_1k_2<-1$, $|k_2|=1$ and $\widehat{\varphi}(z)=
C\frac{\Gamma\left(\frac{z-1}{2|k_1|}\right)\Gamma\left(\frac{z+m+|k_1|+1}{2|k_1|}\right)}{\Gamma\left(\frac{z+2|k_1|+1}{2|k_1|}\right)
\Gamma\left(\frac{z+m+|k_1|-1}{2|k_1|}\right)}.$
\item[(8)]$k_1k_2>0$, $k_1\neq k_2$, $|k_2|<m+|k_1|+2$ and $\widehat{\varphi}(z)=
C\frac{\Gamma\left(\frac{z+|k_2|}{2|k_1|}\right)\Gamma\left(\frac{z+m+|k_1|-|k_2|}{2|k_1|}\right)}{\Gamma\left(\frac{z+2|k_1|-|k_2|}{2|k_1|}\right)
\Gamma\left(\frac{z+m+|k_1|+|k_2|}{2|k_1|}\right)}.$
\item[(9)]$k_1k_2>0$, $|k_2|\geq m+|k_1|+2$, $m=(2n+1)|k_1|$ for some $n\in\mathbb{N}$ and
$$\widehat{\varphi}(z)=C\frac{\left(\frac{z-|k_2|}{2|k_1|}+1\right)\left(\frac{z-|k_2|}{2|k_1|}+2\right)\cdots\left(\frac{z-|k_2|}{2|k_1|}+n\right)}
{\frac{z+|k_2|}{2|k_1|}\left(\frac{z+|k_2|}{2|k_1|}+1\right)\cdots\left(\frac{z+|k_2|}{2|k_1|}+n\right)}.$$
\end{enumerate}
In each condition $(1)$-$(5)$, $rank\left(\left[T_{e^{ik_1\theta} r^m},T_{e^{ik_2\theta }\varphi}\right]\right)=0$. In each condition $(6)$-$(9)$,
\begin{align}\label{rmrange}
    Ran\left(\left[T_{e^{ik_1\theta} r^m},T_{e^{ik_2\theta }\varphi}\right]\right)=\emph{span}\left\{r^{|k|}e^{ik\theta}:k\in{{\Lambda_1}}\right\}
\end{align} and
\begin{align*}
    \left[T_{e^{ik_1\theta} r^m},T_{e^{ik_2\theta }\varphi}\right]&=\sum\limits_{k \in {\Lambda_{\frac{1}{2}}} }{C_k \left[ \left(r^{|k|}e^{ik\theta}\right)  \otimes \left(r^{|k-k_1-k_2|}e^{i(k-k_1-k_2)\theta}\right)\right. }\nonumber\\
    &\ \ \ \ \ \ \ \ \ \ \ \ \ \ \ {\left.- \left(r^{|-k+k_1+k_2|}e^{i\left(-k+k_1+k_2\right)\theta}\right)\otimes \left(r^{|-k|}e^{-ik\theta}\right) \right]}
\end{align*}
for some nonzero constant $C_k$.
Furthermore, $rank\left(\left[T_{e^{ik_1\theta} r^m},T_{e^{ik_2\theta }\varphi}\right]\right)=|k_1|+|k_2|-1$ when $k_1+k_2$ is odd, and $rank\left(\left[T_{e^{ik_1\theta} r^m},T_{e^{ik_2\theta }\varphi}\right]\right)=|k_1|+|k_2|-2$ when $k_1+k_2$ is even.
\end{theorem}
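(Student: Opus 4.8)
The plan is to prove the equivalence by a case split on the pair $(k_1,k_2)$, using Theorem~\ref{thm commutemax} and Corollary~\ref{cor cradial} for the degenerate pairs and Lemma~\ref{lem vak} for the remaining ones, and then to read off the range and rank from \eqref{rangec}--\eqref{pair} with the help of the strict monotonicity in Lemmas~\ref{lem G1} and~\ref{lem Gamma}. First I would dispense with the ``small'' pairs. If $k_1=0$ then $T_{e^{ik_1\theta}r^m}=T_{r^m}$ is radial and Corollary~\ref{cor cradial}(a) shows the commutator has finite rank exactly when $k_2=0$ (case (3)) or $m=0$ (case (1)); the same corollary applied to $\left[T_\varphi,T_{e^{ik_1\theta}r^m}\right]$ when $k_2=0$, $k_1\neq0$ gives case (2). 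When $k_1,k_2\neq0$ one may assume $k_1>0$, since passing to the adjoint via $\left[T_{e^{ik_1\theta} r^m},T_{e^{ik_2\theta }\varphi}\right]^{*}=-\left[T_{e^{-ik_1\theta} r^m},T_{e^{-ik_2\theta }\overline{\varphi}}\right]$ preserves the rank and maps each of the conditions (4)--(9) to itself. Then $k_1=k_2$ gives case (4) by Corollary~\ref{cor cradial}(b), while $k_1k_2=-1$ (so $k_1=1$, $k_2=-1$) gives, on solving $\widehat{r^m}(z)\widehat{\varphi}(z)=C\,\widehat{r}(z)\widehat{r^{-1}}(z)$ by partial fractions and inverting the Mellin transform, precisely the symbol in case (5) via Corollary~\ref{cor cradial}(c).

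The core of the proof is the range $k_1>0$, $k_2\notin\{0,k_1\}$, $k_1k_2\neq-1$. Substituting $\widehat{r^m}(z)=\frac1{z+m}$ into the finite-rank criterion \eqref{cmax}, clearing denominators, and rewriting each factor as the Mellin transform of a T-function makes the arithmetic-progression-to-half-plane argument from the proof of Theorem~\ref{thm commutemax} available, and turns \eqref{cmax} into the functional equation
\[
\frac{\widehat{\varphi}(z+2k_1)}{\widehat{\varphi}(z)}=\frac{(z+k_2)(z+m+k_1-k_2)}{(z+2k_1-k_2)(z+m+k_1+k_2)}
\]
for $\textrm{Re}\,z$ large. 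A particular meromorphic solution is the ratio $\Phi$ of gamma functions in \eqref{gk}, and arguing as in \cite[Theorem~4]{CR} (analyticity of $\widehat{\varphi}$ and the absence of poles on $\{\textrm{Re}\,z\geq2\}$ forcing the $2k_1$-periodic factor $\widehat{\varphi}/\Phi$ to be constant) gives $\widehat{\varphi}=C\Phi$. Matching Lemma~\ref{lem vak} against the sign of $k_2$ now produces exactly case (6) when $k_2\leq-2$, case (7) when $k_2=-1$ (here $k_1\geq2$ because $k_1k_2\neq-1$), case (8) when $0<k_2<m+k_1+2$ and $k_2\neq k_1$, and case (9) when $k_2\geq m+k_1+2$ (forcing $m=(2n+1)k_1$). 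Conversely, for each of (6)--(9) a direct computation with $\Gamma(x+1)=x\Gamma(x)$ (polynomial arithmetic for (9)) verifies \eqref{cmax} for all $n\geq N_1$, so Theorem~\ref{thm commutemax} yields finite rank together with \eqref{rangec} and \eqref{pair}, while cases (1)--(5) give the zero operator directly (identity, radial symbols, a constant symbol, proportional symbols, or \cite[Proposition~3.4]{DZ3}).

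Finally I would upgrade the range inclusion \eqref{rangec} to an equality in cases (6)--(9), i.e. show that for every $k\in\Lambda_{\frac{1}{2}}$ the coefficient $C_k'=\lambda_{k_2,k-k_1-k_2}\lambda_{k_1,k-k_1}-\lambda_{k_1,k-k_1-k_2}\lambda_{k_2,k-k_2}$ coming from Lemma~\ref{lem constrela} is nonzero. Substituting $\widehat{r^m}(z)=\frac1{z+m}$ and $\widehat{\varphi}=C\Phi$ and simplifying, the vanishing of $C_k'$ turns into an equality $F(x_1)=F(x_2)$ (respectively $G(x_1)=G(x_2)$) with $x_1\neq x_2$ both lying in the interval on which $F$ (respectively $G$) is strictly monotone by Lemma~\ref{lem G1} (respectively Lemma~\ref{lem Gamma}); this is impossible, so $C_k'\neq0$, the range equals $\textrm{span}\{r^{|k|}e^{ik\theta}:k\in\Lambda_1\}$, and counting the integers of $\Lambda_1$ via $2N_1+k_1+k_2=|k_1|+|k_2|$ gives rank $|k_1|+|k_2|-1$ when $k_1+k_2$ is odd and $|k_1|+|k_2|-2$ when $k_1+k_2$ is even. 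I expect the uniqueness step $\widehat{\varphi}=C\Phi$ to be the main obstacle, since the functional equation alone determines $\widehat{\varphi}$ only up to a $2k_1$-periodic factor and eliminating that factor relies on the analyticity and growth properties of Mellin transforms of T-functions, exactly as in \cite{CR,DZ3}.
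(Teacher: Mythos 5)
Your proposal is correct and follows essentially the same route as the paper: reduce to the criterion of Theorem~\ref{thm commutemax}, solve the resulting recursion for $\widehat{\varphi}$ by the Mellin/gamma-ratio argument of \cite{CR} to get \eqref{Cgk}, sort the outcome through Lemma~\ref{lem vak} (with Corollary~\ref{cor cradial} for the degenerate pairs), and upgrade the range inclusion to equality via the strict monotonicity of $F$ and $G$ from Lemmas~\ref{lem G1} and~\ref{lem Gamma}. The only differences are organizational (e.g.\ you route $k_2=-k_1$, $|k_1|\geq 2$ through the general machinery rather than Corollary~\ref{cor cradial}(c), and a couple of the paper's range subcases are settled by direct elementary identities rather than the named functions $F$, $G$), and the step you flag as the main obstacle --- eliminating the $2k_1$-periodic factor --- is exactly the point the paper delegates to \cite{CR}.
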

\begin{proof} If $k_1k_2=0$ or $|k_1|=|k_2|$, then Corollary~\ref{cor cradial} shows that the commutator $\left[T_{e^{ik_1\theta} r^m},T_{e^{ik_2\theta }\varphi}\right]$ has finite rank if and only if one of the conditions (1)-(5) holds,
and $rank\left(\left[T_{e^{ik_1\theta} r^m},T_{e^{ik_2\theta }\varphi}\right]\right)=0$ in each condition.

Now, we suppose $k_1k_2\neq0$ and $|k_1|\neq |k_2|$. Without loss of generality, we can also assume that $k_1>0$, for otherwise we could take the
adjoints. Then by Proposition~\ref{prop commutemax}, the commutator $\left[T_{e^{ik_1\theta} r^m},T_{e^{ik_2\theta }\varphi}\right]$ has finite rank if and only if
\begin{align}\label{rm}
&2(n+k_2+1)\widehat{r^m}(2n+k_1+2k_2+2)\widehat{\varphi}(2n+k_2+2)\nonumber\\
&=2(n+k_1+1)\widehat{r^m}(2n+k_1+2)\widehat{\varphi}(2n+2k_1+k_2+2)
\end{align}
holds for any natural number $n\geq N_1=\max\left\{0,-k_2\right\}$. Then it follows that
$$
\widehat{\varphi}(2n+2k_1+k_2+2)=\widehat{\varphi
}(2n+k_2+2)\frac{(2n+2k_2+2)(2n+k_1+m+2)}{(2n+2k_1+2)(2n+k_1+2k_2+m+2)},
$$
which is the same as Equation (2.4) of \cite{CR}, so one can get
\begin{equation}\label{Cgk}
\widehat{\varphi}(z)=
C\frac{\Gamma\left(\frac{z+k_2}{2k_1}\right)\Gamma\left(\frac{z+m+k_1-k_2}{2k_1}\right)}{\Gamma\left(\frac{z+2k_1-k_2}{2k_1}\right)
\Gamma\left(\frac{z+m+k_1+k_2}{2k_1}\right)}
\end{equation}
for some nonzero constant $C$.
Thus, according to Lemma~\ref{lem vak}, the commutator $\left[T_{e^{ik_1\theta} r^m},T_{e^{ik_2\theta }\varphi}\right]$ has finite rank if and only if one of the conditions (6)-(9) holds.

Comparing with Proposition~\ref{prop commutemax}, this theorem will be proved if we can show that \eqref{rmrange} holds in each condition (6)-(9).
Moreover, according to the proof of Proposition~\ref{prop commutemax}, we only need to show that $r^{|k|}e^{ik\theta}\in Ran\left(\left[T_{e^{ik_1\theta}r^m},T_{e^{ik_2\theta }\varphi_2}\right]\right)$ holds for any $k\in {\Lambda_{\frac{1}{2}}}$,
or equivalently, $$ \left[T_{e^{ik_1\theta} r^m},T_{e^{ik_2\theta }\varphi}\right]\left(r^{|k-k_1-k_2|}e^{i\left(k-k_1-k_2\right)\theta}\right)
=C_k r^{|k|}e^{ik\theta}$$ holds for some nonzero constant $C_k$. Recall that $k\in {\Lambda_{\frac{1}{2}}}$ implies $\frac{-k_1-k_2}{2}<k-k_1-k_2\leq N_1-1$.
So it suffices to prove that, for any
$l\in\left\{\left[\frac{-k_1-k_2}{2}\right],\cdots,N_1-1\right\},$ where $[\ \cdot\ ]$ denotes the greatest integer function,
$$\left[T_{e^{ik_1\theta} r^m},T_{e^{ik_2\theta }\varphi}\right]\left(r^{|l|}e^{il\theta}\right)=0\Longleftrightarrow l=\frac{-k_1-k_2}{2}.
$$
To show this we need to discuss four cases.

\emph{Case} 1. Suppose $k_1=-m=1$, $k_2\leq -2$. Thus $N_1=-k_2$.
So for any $\left[\frac{-1-k_2}{2}\right]\leq n\leq -1-k_2$, by Lemma~\ref{lem calc} we get
\begin{align*}
    \left[T_{e^{i\theta}r^{-1}},T_{e^{ik_2\theta
}\varphi}\right]({z}^{n})=0&\Longleftrightarrow (-k_2-n+1)\widehat{r^m}(-k_1-2k_2-2n+2)\widehat{\varphi}(-k_2+2)\\
&\ \ \ \ \ \ =(k_1+n+1)\widehat{r^m}(2n+k_1+2)\widehat{\varphi}(-k_2+2)\\
&\Longleftrightarrow \frac{-k_2-n+1}{-2k_2-2n}=\frac{n+2}{2n+2}\Longleftrightarrow n=\frac{-1-k_2}{2},
\end{align*}
as desired.

\emph{Case} 2. Suppose $k_1>1$ and $k_2=-1$. Thus $N_1=1$.
So for any $0\leq n\leq \left[\frac{k_1+k_2}{2}\right]$, by Lemma~\ref{lem calc} we get
\begin{align}\label{zgl3}
    \left[T_{e^{ik_1\theta} r^m},T_{e^{ik_2\theta
}\varphi}\right](\overline{z}^{n})=0
&\Longleftrightarrow (n-k_2+1)\widehat{r^m}(k_1+2)\widehat{\varphi}(2n-k_2+2)\nonumber\\
&\ \ \ \ \ \ =(k_1-n+1)\widehat{\varphi}(2k_1+k_2-2n+2)\widehat{r^m}(k_1+2)\nonumber\\
&\Longleftrightarrow F(n)=F(k_1+k_2-n),
\end{align}
where
\begin{equation}\label{F}
    F(x)=2(x-k_2+1)\widehat{\varphi}(2x-k_2+2).
\end{equation}
Let $a=\frac{k_2}{k_1}$ and $b=\frac{m+k_1}{2k_1}$.
Then by \eqref{Cgk}, we get
\begin{align*}
    F(x)&=2k_1C\frac{2 x-2k_2+2}{2k_1}\frac{\Gamma\left(\frac{2x+2}{2k_1}\right)\Gamma\left(\frac{2x+m+k_1-2k_2+2}{2k_1}\right)}{\Gamma\left(\frac{2x+2k_1-2k_2+2}{2k_1}\right)
\Gamma\left(\frac{2x+m+k_1+2}{2k_1}\right)}\\
    &=2k_1C\frac{\Gamma\left(\frac{x+1}{k_1}\right)\Gamma\left(\frac{x+1}{k_1}+b-a\right)}{\Gamma\left(\frac{x+1}{k_1}-a\right)\Gamma\left(\frac{x+1}{k_1}+b\right)}.
\end{align*}
Note that $a<0$ and $b>0$, so condition (b) of Lemma~\ref{lem G1} implies that $F(x)$ is strictly monotone on $(-1,+\infty)$. Thus \eqref{zgl3} holds if and only if
$n=\frac{k_1+k_2}{2}$, as desired.

\emph{Case} 3. Suppose $k_1>0$, $k_2>0$ and $k_1<k_2$. Thus $N_1=0$.
Observe that both condition (8) and condition (9) imply that $m+k_1\neq 0$.
So for any $k_1\leq n\leq\left[\frac{k_1+k_2}{2}\right]$, by Lemma~\ref{lem calc} we get
\begin{align*}
\left[T_{e^{ik_1\theta} r^m},T_{e^{ik_2\theta
}\varphi}\right](\overline{z}^{n})=0&\Longleftrightarrow (k_2-n+1)\widehat{r^m}(k_1+2k_2-2n+2)\widehat{\varphi}(k_2+2)\\
&\ \ \ \ \ \ =(n-k_1+1)\widehat{r^m}(2n-k_1+2)\widehat{\varphi}(k_2+2)\\
&\Longleftrightarrow \frac{k_2-n+1}{m+k_1+2k_2-2n+2}=\frac{n-k_1+1}{m+2n-k_1+2}\\
&\Longleftrightarrow \frac{m+k_1}{k_2-n+1}=\frac{m+k_1}{n-k_1+1}\Longleftrightarrow \ n=\frac{k_1+k_2}{2}.
\end{align*}
Next, for any $1\leq n<k_1$, by Lemma~\ref{lem calc} we get
\begin{align}\label{Geq}
\left[T_{e^{ik_1\theta} r^m},T_{e^{ik_2\theta
}\varphi}\right](\overline{z}^{n})=0
&\Longleftrightarrow 2(k_2-n+1)\widehat{r^m}(k_1+2k_2-2n+2)\widehat{\varphi}(k_2+2)\nonumber\\
&\ \ \ \ \ \ =2(k_1-n+1)\widehat{r^m}(k_1+2)\widehat{\varphi}(2k_1+k_2-2n+2)\\
&\Longleftrightarrow \frac{2(k_2-n+1)}{m+k_1+2k_2-2n+2}\frac{\Gamma\left(\frac{ 2k_2+2}{2k_1}\right)\Gamma\left(\frac{m+k_1+2}{2k_1}\right)}{\Gamma\left(\frac{2k_1+2}{2k_1}\right)\Gamma\left(\frac{m+k_1+2k_2+2}{2k_1}\right)}\nonumber\\
&\ \ \ \ \ \ =\frac{2(k_1-n+1)}{m+k_1+2}\frac{\Gamma\left(\frac{2k_1+2k_2-2n+2}{2k_1}\right)\Gamma\left(\frac{m+3k_1-2n+2}{2k_1}\right)}
{\Gamma\left(\frac{4k_1-2n+2}{2k_1}\right)\Gamma\left(\frac{m+3k_1+2k_2-2n+2}{2k_1}\right)}.\nonumber
\end{align}
Let $a=\frac{k_2}{k_1}, \ b=\frac{m+k_1}{2k_1} \  \textrm{and}\ x=\frac{n}{k_1}.$ Then the above equation becomes
\begin{align*}
&\frac{\frac{1}{k_1}-x+a}{\frac{1}{k_1}-x+b+a}
\frac{\Gamma\left(\frac{1}{k_1}+a\right)\Gamma\left(\frac{1}{k_1}+b\right)}{\Gamma\left(\frac{1}{k_1}+1\right)\Gamma\left(\frac{1}{k_1}+b+a\right)}
=\frac{\frac{1}{k_1}-x+1}{\frac{1}{k_1}+b}\frac{\Gamma\left(\frac{1}{k_1}-x+a+1\right)\Gamma\left(\frac{1}{k_1}-x+b+1\right)}
{\Gamma\left(\frac{1}{k_1}-x+2\right)\Gamma\left(\frac{1}{k_1}-x+b+a+1\right)}\\
&\Longleftrightarrow \frac{\Gamma\left(\frac{1}{k_1}-x+a\right)\Gamma\left(\frac{1}{k_1}-x+b+1\right)}
{\Gamma\left(\frac{1}{k_1}-x+1\right)\Gamma\left(\frac{1}{k_1}-x+b+a\right)}
=\frac{\Gamma\left(\frac{1}{k_1}+a\right)\Gamma\left(\frac{1}{k_1}+b+1\right)}{\Gamma\left(\frac{1}{k_1}+1\right)\Gamma\left(\frac{1}{k_1}+b+a\right)}\\
&\Longleftrightarrow G\left(x-\frac{1}{k_1}\right)=G\left(-\frac{1}{k_1}\right),
\end{align*}
where the function $G(x)$ is defined by \eqref{G}.
Obviously, this is contradict to condition (b) of Lemma~\ref{lem Gamma} since $a>1$, $b>0$ and $x\in(0,1)$, as desired.

\emph{Case} 4. Suppose $k_1>0$, $k_2>0$ and $k_1>k_2$. Thus $N_1=0$.
So for any $k_2\leq n\leq \left[\frac{k_1+k_2}{2}\right]$, by Lemma~\ref{lem calc} we get
\begin{align}\label{zgl2}
\left[T_{e^{ik_1\theta} r^m},T_{e^{ik_2\theta
}\varphi}\right](\overline{z}^{n})=0&\Longleftrightarrow 2(n-k_2+1)\widehat{r^m}(k_1+2)\widehat{\varphi}(2n-k_2+2)\nonumber\\
&\ \ \ \ \ \ =2(k_1-n+1)\widehat{\varphi}(2k_1+k_2-2n+2)\widehat{r^m}(k_1+2)\nonumber\\
&\Longleftrightarrow F(n)=F(k_1+k_2-n),
\end{align}
where $F(x)$ is also defined by \eqref{F}. Here $a=\frac{k_2}{k_1}>0$ and $b=\frac{m+k_1}{2k_1}>0$.
Then condition (a) of Lemma~\ref{lem G1} implies that $F(x)$ is strictly monotone on $(k_2-1,+\infty)$, and hence \eqref{zgl2} holds if and only if
$n=\frac{k_1+k_2}{2}$, as desired.

Next, for any $1\leq n<k_2$, by Lemma~\ref{lem calc} we get
\begin{align*}
    \left[T_{e^{ik_1\theta} r^m},T_{e^{ik_2\theta
}\varphi}\right](\overline{z}^{n})=0
&\Longleftrightarrow 2(k_2-n+1)\widehat{r^m}(k_1+2k_2-2n+2)\widehat{\varphi}(k_2+2)\\
&\ \ \ \ \ \ =2(k_1-n+1)\widehat{r^m}(k_1+2)\widehat{\varphi}(2k_1+k_2-2n+2),
\end{align*}
which is the same as \eqref{Geq}. Using the same notation, we get that the above equation is also equivalent to
$ G\left(x-\frac{1}{k_1}\right)=G\left(-\frac{1}{k_1}\right)$.
Here $a=\frac{k_2}{k_1}\in(0,1),\ b=\frac{m+k_1}{2k_1}>0\  \textrm{and} \ x=\frac{n}{k_1}\in\left(0,a\right).$
Obviously, this is contradict to condition (a) of Lemma~\ref{lem Gamma}. This completes the proof.
\end{proof}

Below we present the examples of nonzero commutators of finite rank, corresponding to the conditions (6)-(9) of Theorem~\ref{thm commute}.
\begin{example}\label{ex com} Let $\varphi(r)$ be a nonzero radial T-function on $D$, then on $L_h^2$, the following statements hold.
\begin{enumerate}
\item[(1)] The commutator $\left[T_{e^{i\theta} r^{-1}},T_{e^{-3i\theta}\varphi(r)}\right]$ has finite rank if and only if $\varphi(r)=Cr^3.$\\ In this case, $\left[T_{e^{i\theta} r^{-1}},T_{e^{-3i\theta}\varphi(r)}\right]=\frac{1}{2}C\left(1\otimes {z}^2-\overline{z}^2 \otimes 1\right).$
\item[(2)] The commutator $\left[T_{e^{2i\theta} r^{6}},T_{e^{-i\theta}\varphi(r)}\right]$ has finite rank if and only if $\varphi(r)=C\left(\frac{3}{r}-r^3\right).$\\ In this case, $\left[T_{e^{2i\theta} r^{6}},T_{e^{-i\theta}\varphi(r)}\right]=-\frac{19}{30}C\left(1  \otimes \overline{z}-z  \otimes 1\right)$.
\item[(3)] Let $m\in \mathbb{R}$, $m\geq -1$. Then the commutator $\left[T_{e^{i\theta} r^{m}},T_{e^{2i\theta}\varphi(r)}\right]$ has finite rank if and only if $m>-1$ and  $\varphi(r)=C\left[(m+1)r^{m+1}-(m-1)r^{m-1}\right].$\\ In this case,  $\left[T_{e^{i\theta} r^{m}},T_{e^{2i\theta}\varphi(r)}\right]=-\frac{192(m+1)}{(m+5)^2(m+3)^2}C\left({z}\otimes \overline{z}^2-{z}^2\otimes \overline{z}\right)$.
\item[(4)] The commutator $\left[T_{e^{i\theta} r^3},T_{e^{6i\theta}\varphi(r)}\right]$ has finite rank if and only if $\varphi(r)=C\left[6r^{8}-5r^{6}\right].$ In this case, $\left[T_{e^{i\theta} r^3},T_{e^{6i\theta}\varphi(r)}\right]=C\left[-\frac{5}{24}\left({z}\otimes \overline{z}^6-{z}^6\otimes \overline{z}\right)-\frac{27}{196}\left({z}^2\otimes \overline{z}^5-{z}^5\otimes \overline{z}^2\right)\right.$
$\left.-\frac{1}{21}\left({z}^3\otimes \overline{z}^4-{z}^4\otimes \overline{z}^3\right)\right].
$
\end{enumerate}
\end{example}

Next, we show some interesting applications of Theorem~\ref{thm commute}.
\begin{corollary}\label{cor commonial}  Let $k_1,\;k_2\in\mathbb{Z}$ such that $k_1>0$ or $k_1=-1$,
and let $\varphi(r)$ be a nonzero radial T-function on $D$. Then on $L_h^2$, the following statements hold.
\begin{enumerate}
\item[(a)] The commutator $\left[T_{e^{ik_1\theta }r^{k_1}},T_{e^{ik_2\theta }\varphi}\right]$ has finite rank if and only if
$$ k_2>-2\ \ \textrm{and}\ \  \varphi(r)=Cr^{k_2}.$$
\item[(b)] The commutator $\left[T_{e^{-ik_1\theta }r^{k_1}},T_{e^{ik_2\theta }\varphi}\right]$ has finite rank if and only if
$$ k_2<2\ \ \textrm{and}\ \  \varphi(r)=Cr^{-k_2}.$$
\end{enumerate}
\end{corollary}
\begin{proof} If $k_1>0$, then by Theorem~\ref{thm commute}, the commutator $\left[T_{e^{ik_1\theta }r^{k_1}},T_{e^{ik_2\theta }\varphi}\right]$ has finite rank if and only if $k_2$ satisfies one of the conditions (2), (4)-(5) and (7)-(9), and $\widehat{\varphi}(z)=
\frac{C}{z+k_2}.$ In other words, $k_2>-2$ and $\varphi(r)=Cr^{k_2}.$

Similarly, if $k_1=-1$, then $\left[T_{e^{ik_1\theta }r^{k_1}},T_{e^{ik_2\theta }\varphi}\right]$ has finite rank if and only if one of the conditions (2) and (4)-(6) holds, which also implies that $k_2>-2$ and $\varphi(r)=Cr^{k_2}$.

Combining condition (a) with the use of
adjoint operators, one can get condition (b) holds.
\end{proof}
\begin{corollary}\label{cor comr}  Let $k_1,\;k_2\in\mathbb{Z}$ such that $k_1k_2\neq 0$, and let $m_1,\;m_2\in \mathbb{R}$ such that greater than or equal to $-1$.
Then the commutator $\left[T_{e^{ik_1\theta }r^{m_1}},T_{e^{ik_2\theta }r^{m_2}}\right]$ has finite rank on $L_h^2$ if and only if
one of the following conditions holds:
\begin{enumerate}
\item[(1)]$k_1=k_2$ and $m_1=m_2$.
\item[(2)]$k_1=m_1$ and $k_2=m_2$.
\item[(3)]$k_1=-m_1$ and $k_2=-m_2$.
\end{enumerate}
\end{corollary}
\begin{proof} Without loss of generality, we can assume that $k_1>0$. Since $k_2\neq0$ and $$\frac{k_2}{2k_1}+\frac{m_1+k_1-k_2}{2k_1}=\frac{2k_1-k_2}{2k_1}+\frac{m_1+k_1+k_2}{2k_1}-1,$$ one can easily see that $$
\widehat{\varphi}(z)=C\frac{\Gamma\left(\frac{z+k_2}{2k_1}\right)\Gamma\left(\frac{z+m_1+k_1-k_2}{2k_1}\right)}{\Gamma\left(\frac{z+2k_1-k_2}{2k_1}\right)
\Gamma\left(\frac{z+m_1+k_1+k_2}{2k_1}\right)}$$ is a proper rational function with the degree of the denominator one if and only if $$\frac{2k_1-2k_2}{2k_1}=0,\ \ \frac{k_1-m_1}{2k_1}=0\ \ \textrm{or} \ \ \frac{m_1+k_1}{2k_1}=0.$$  Therefore, $\widehat{\varphi}(z)=\widehat{r^{m_2}}(z)=\frac{1}{z+m_2}$ holds if and only if
one of the following conditions holds:
\begin{enumerate}
\item[(1)]$k_1=k_2>0$ and $m_1=m_2$.
\item[(2)]$k_1=m_1>0$ and $k_2=m_2$.
\item[(3)]$k_1=-m_1=1$ and $k_2=-m_2$.
\end{enumerate}
Then by Theorem~\ref{thm commute}, the desired result is obvious.
\end{proof}
\begin{remark}By Theorem~\ref{thm commute}, one can easily get the range, the canonical form and the rank of the finite rank commutator appeared in Corollary~\ref{cor commonial} and Corollary~\ref{cor comr}. For example, for positive natural numbers $k_1$ and $k_2$, we have ${\Lambda_1}=\left\{k\in{\mathbb{Z}}: 1\leq k\leq k_1+k_2-1, k\neq \frac{k_1+k_2}{2}\right\}$, and hence
\begin{equation}\label{comana}
\left[T_{z^{k_1}},T_{z^{k_2}}\right]=\sum\limits_{k \in {\Lambda_1} }{C_k z^{k_1+k_2-k}  \otimes {\overline{z}}^{k} },
\end{equation}
which is the same as \cite[Proposition 2.3]{CKL}. We would like to point out that in \cite{CKL} the authors first got \eqref{comana} by the direct calculation, taking on the reproducing kernel, then they showed that the commutator $\left[T_{z^{k_1}},T_{z^{k_2}}\right]$ has finite rank. So the method used in this paper is quite different from that in \cite{CKL}.
\end{remark}

\section{Ranks of generalized semicommutators on $L_h^2$}

In this section,
we will discuss the finite rank generalized semicommutators of quasihomogeneous Toeplitz
operators on $L_h^2$. Obviously, if $T_{e^{ik_1\theta}\varphi_{1}}T_{e^{ik_2\theta }\varphi_2}-T_{f}$ has finite rank, then $f$ must be quasihomogeneous of degree $k_1+k_2$. Therefore, it suffices to consider the generalized semicommutator of the form $T_{e^{ik_1\theta}\varphi_{1}}T_{e^{ik_2\theta }\varphi_2}-T_{e^{i(k_1+k_2)\theta }\psi}$ in this section.
Throughout the rest part of this paper, we also define $$N_2=\max\left\{0,-k_2,-k_1-k_2\right\}\  \textrm{and} \ N_3=\max\left\{0,k_2,k_1+k_2\right\}.$$
Then we write $${\Lambda_2}=\left\{k\in{\mathbb{Z}}: -N_3+k_1+k_2+1\leq k\leq N_2+k_1+k_2-1\right\}.$$
First we show the general result of the finite rank generalized semicommutators of two quasihomogeneous Toeplitz
operators.

\begin{proposition}\label{prop pruductmax}  Let $k_1,\;k_2\in\mathbb{Z}$, and let ${\varphi_{1}},\;{\varphi_{2}}$ and $\psi$ be radial T-functions. Then the generalized semicommutator $T_{e^{ik_1\theta}\varphi_{1}}T_{e^{ik_2\theta }\varphi_2}-T_{e^{i(k_1+k_2)\theta }\psi}$ has finite rank on $L_h^2$ if and only if
\begin{equation}\label{pmax1}
2(n+k_2+1)\widehat{\varphi_1}(2n+k_1+2k_2+2)\widehat{\varphi_2}(2n+k_2+2)=\widehat{\psi }(2n+k_1+k_2+2)
\end{equation}
holds for any natural number $n\geq N_2$, and
\begin{equation}\label{pmax2}
2(n-k_2+1)\widehat{\varphi_1 }(2n-k_1-2k_2+2)\widehat{\varphi_2}(2n-k_2+2)=\widehat{\psi }(2n-k_1-k_2+2)
\end{equation}
holds for any natural number $n\geq N_3$.
In this case,
\begin{align*}
    &Ran\left(T_{e^{ik_1\theta}\varphi_{1}}T_{e^{ik_2\theta }\varphi_2}-T_{e^{i(k_1+k_2)\theta }\psi}\right)=
\emph{span}\left\{r^{|k_{(1)}|}e^{ik_{(1)}\theta}, r^{|k_{(2)}|}e^{ik_{(2)}\theta},\cdots, r^{|k_{(N)}|}e^{ik_{(N)}\theta}\right\}
\end{align*}
for some $k_{(1)},\cdots, k_{(N)}\in \Lambda_{2}$,
and \begin{align}\label{sczhengjiaohe}
&T_{e^{ik_1\theta}\varphi_{1}}T_{e^{ik_2\theta }\varphi_2}-T_{e^{i(k_1+k_2)\theta }\psi}=\sum\limits_{j = 1}^N {C_{k_{(j)}} \left(r^{|k_{(j)}|}e^{ik_{(j)}\theta}\right)  \otimes \left(r^{|k_{(j)}-k_1-k_2|}e^{i\left(k_{(j)}-k_1-k_2\right)\theta}\right) }
\end{align}
for some nonzero constant $C_{k_{(j)}}$.
Furthermore,
$$rank\left(T_{e^{ik_1\theta}\varphi_{1}}T_{e^{ik_2\theta }\varphi_2}-T_{e^{i(k_1+k_2)\theta }\psi}\right)\leq \max\left\{0,|k_1|-1,|k_2|-1,|k_1+k_2|-1\right\}.
$$
\end{proposition}
\begin{proof}
By Lemma~\ref{lem calc} we get
\begin{align*}
    &\left(T_{e^{ik_1\theta}\varphi_{1}}T_{e^{ik_2\theta }\varphi_2}-T_{e^{i(k_1+k_2)}\psi}\right)(z^n)\\
    &=\left[2(n+k_2+1)\widehat{\varphi_1}(2n+k_1+2k_2+2)\widehat{\varphi_2}(2n+k_2+2)
    -\widehat{\psi }(2n+k_1+k_2+2)\right]\\
    &\ \ \ \ \ \times 2(n+k_1+k_2+1)z^{n+k_1+k_2}
\end{align*} holds for any natural number $n\geq N_2$, and
\begin{align*}
    &\left(T_{e^{ik_1\theta}\varphi_{1}}T_{e^{ik_2\theta }\varphi_2}-T_{e^{i(k_1+k_2)}\psi}\right)(\overline{z}^n)\\
    &=\left[2(n-k_2+1)\widehat{\varphi_1 }(2n-k_1-2k_2+2)\widehat{\varphi_2}(2n-k_2+2)-\widehat{\psi }(2n-k_1-k_2+2)\right] \\
    &\ \ \ \ \ \ \times 2(n-k_1-k_2+1)\overline{z}^{n-k_1-k_2}
\end{align*} holds for any natural number $n\geq N_3$. As in the proof of Proposition~\ref{prop commutemax}, one can easily get that $T_{e^{ik_1\theta} r^m}T_{e^{ik_2\theta }\varphi}-T_{e^{i(k_1+k_2)}\psi}$ has
finite rank if and only if \eqref{pmax1} holds for $ n\geq N_2$ and \eqref{pmax2} holds for $ n\geq N_3$.
Therefore,
\begin{align*}
    &Ran\left(T_{e^{ik_1\theta}\varphi_{1}}T_{e^{ik_2\theta }\varphi_2}-T_{e^{i(k_1+k_2)\theta }\psi}\right)\\
    &=\emph{span}\left\{\left(T_{e^{ik_1\theta}\varphi_{1}}T_{e^{ik_2\theta }\varphi_2}-T_{e^{i(k_1+k_2)\theta }\psi}\right)\left(r^{|l|}e^{il\theta}\right):l\in{\mathbb{Z}}, -N_3+1\leq l\leq N_2-1\right\}\\
&\subseteq
\emph{span}\left\{r^{|k|}e^{ik\theta}:k\in{\Lambda_{2}}\right\}.
\end{align*}
As a direct consequence of Lemma~\ref{lem comrank}, then we get \eqref{sczhengjiaohe} holds.
Moreover, by simple calculations, one can get $$N_2+N_3=\max\left\{|k_1|,|k_2|,|k_1+k_2|\right\}, $$ as desired. This completes the proof.
\end{proof}

\begin{remark}\label{cor csrelation}
Combining \eqref{S} with the use of adjoint operators, it follows from \eqref{sczhengjiaohe} that
\begin{align*}
&T_{e^{ik_2\theta }\varphi_2}T_{e^{ik_1\theta}\varphi_{1}}-T_{e^{i(k_1+k_2)\theta }\psi}=\sum\limits_{j = 1}^N {C_{k_{(j)}} \left(r^{|-k_{(j)}+k_1+k_2|}e^{i\left(-k_{(j)}+k_1+k_2\right)\theta} \right) \otimes\left(r^{|-k_{(j)}|}e^{-ik_{(j)}\theta}\right) };\\
&T_{e^{-ik_1\theta} \overline{\varphi_{1}}}T_{{e^{-ik_2\theta }\overline{\varphi_2}}}-T_{e^{-i(k_1+k_2)\theta} \overline{\psi}}=\sum\limits_{j = 1}^N {\overline{C_{k_{(j)}}} \left(r^{|-k_{(j)}|}e^{-ik_{(j)}\theta}\right)  \otimes \left(r^{|-k_{(j)}+k_1+k_2|}e^{i\left(-k_{(j)}+k_1+k_2\right)\theta} \right)  };\\
&T_{{e^{-ik_2\theta }\overline{\varphi_2}}}T_{e^{-ik_1\theta} \overline{\varphi_{1}}}-T_{e^{-i(k_1+k_2)\theta} \overline{\psi}}=\sum\limits_{j = 1}^N {\overline{C_{k_{(j)}}} \left(r^{|k_{(j)}-k_1-k_2|}e^{i\left(k_{(j)}-k_1-k_2\right)\theta}\right) \otimes \left(r^{|k_{(j)}|}e^{ik_{(j)}\theta}\right)}.
\end{align*}
Furthermore, one can get
\begin{align*}
\left[T_{e^{ik_1\theta}\varphi_{1}},T_{e^{ik_2\theta }\varphi_2}\right]
&=\left(T_{e^{ik_1\theta}\varphi_{1}}T_{e^{ik_2\theta }\varphi_2}-T_{e^{i(k_1+k_2)\theta }\psi}\right)-\left(T_{e^{ik_2\theta }\varphi_2}T_{e^{ik_1\theta}\varphi_{1}}-T_{e^{i(k_1+k_2)\theta }\psi}\right)\\
&=\sum\limits_{j = 1}^N {C_{k_{(j)}}\left[ \left(r^{|k_{(j)}|}e^{ik_{(j)}\theta}\right)  \otimes \left(r^{|k_{(j)}-k_1-k_2|}e^{i\left(k_{(j)}-k_1-k_2\right)\theta}\right) \right.}\\ &\ \ \ \ \ \ \ \ \ \ \ \ \ \ \  \ \left.-\left(r^{|-k_{(j)}+k_1+k_2|}e^{i\left(-k_{(j)}+k_1+k_2\right)\theta} \right) \otimes\left(r^{|-k_{(j)}|}e^{-ik_{(j)}\theta}\right)\right],
\end{align*}
which represents in the same form as \eqref{pair}. But the above form is not necessarily the canonical form, as here $k_{(i)}=-k_{(j)}+k_1+k_2$
may hold for some $k_{(i)},\ k_{(j)}\in \Lambda_{2}$. Thus, the rank of $\left[T_{e^{ik_1\theta} \varphi_{1}},T_{e^{ik_2\theta }\varphi_2}\right]$ should be less or equal to $2\ rank\left(T_{e^{ik_1\theta}\varphi_{1}}T_{e^{ik_2\theta }\varphi_2}-T_{e^{i(k_1+k_2)\theta }\psi}\right),$ which corresponds to Corollary~\ref{cor relation}. In fact, Example~\ref{ex com} and Example~\ref{ex semic} also illustrate this point.
\end{remark}

The following corollary gives a complete description of the finite rank generalized semicommutators of
quasihomogeneous Toeplitz operators with some special
degrees.

\begin{corollary}\label{cor pradial}  Let $k\in\mathbb{Z}$, and let ${\varphi_{1}},\;{\varphi_{2}}$ and $\psi$ be radial T-functions. Then on $L_h^2$, the following statements hold.

\begin{enumerate}
\item[(a)] If $\varphi_{1}$ is nonconstant and $\varphi_{2}\neq0$, then the generalized semicommutator $T_{\varphi_{1}}T_{e^{ik\theta }\varphi_2}-T_{e^{ik\theta }\psi}$ has finite rank if and only if $k=0$ and $\psi$ is a solution of the equation
$$\mathbb{I}\ast_{M}\psi=\varphi_{1}\ast_{M}\varphi_{2}.$$
\item[(b)] If $k\neq0$, then the generalized semicommutator $T_{e^{ik\theta }\varphi_{1}}T_{e^{-ik\theta }\varphi_2}-T_{\psi}$ has finite rank if and only if $|k|=1$, $\varphi_{1}\ast_{M}\varphi_{2}=C(r\ast_{M}r^{-1})$ and $\psi=C$ for some
constant $C$.
\end{enumerate}
In each case, the rank of the generalized semicommutator is zero.
\end{corollary}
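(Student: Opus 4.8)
The plan is to deduce both statements from the Mellin transform criterion of Theorem~\ref{thm pruductmax} and the commutator descriptions of Corollary~\ref{cor cradial}, using Theorem~\ref{thm relation} as a bridge; the only genuinely new point is the determination of $\psi$. (As in Corollary~\ref{cor cradial} we take $\varphi_1,\varphi_2\not\equiv 0$.)

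For (a), if $T_{\varphi_1}T_{e^{ik\theta}\varphi_2}-T_{e^{ik\theta}\psi}$ has finite rank, then by Theorem~\ref{thm relation} the commutator $\big[T_{\varphi_1},T_{e^{ik\theta}\varphi_2}\big]$ has finite rank, so Corollary~\ref{cor cradial}(a) together with the hypothesis that $\varphi_1$ is nonconstant forces $k=0$. Specializing Theorem~\ref{thm pruductmax} to $k_1=k_2=0$, where $N_2=N_3=0$ and \eqref{pmax1}, \eqref{pmax2} coincide, gives $2(n+1)\widehat{\varphi_1}(2n+2)\widehat{\varphi_2}(2n+2)=\widehat{\psi}(2n+2)$ for every $n\geq 0$; since $\widehat{\mathbb I}(z)=1/z$ this reads $\widehat{\varphi_1\ast_M\varphi_2}(2n+2)=\widehat{\mathbb I\ast_M\psi}(2n+2)$, and as $\varphi_1\ast_M\varphi_2$ and $\mathbb I\ast_M\psi$ lie in $L^1([0,1],r\,dr)$ and their Mellin transforms agree on the arithmetic progression $\{2n+2:n\geq 0\}$, the uniqueness of the Mellin transform (as used in the proof of Theorem~\ref{thm commutemax}) yields $\mathbb I\ast_M\psi=\varphi_1\ast_M\varphi_2$. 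Conversely, if $k=0$ and $\mathbb I\ast_M\psi=\varphi_1\ast_M\varphi_2$, then $\widehat{\psi}(z)=z\widehat{\varphi_1}(z)\widehat{\varphi_2}(z)$ for $\textrm{Re}\,z\geq 2$, so \eqref{pmax1}, \eqref{pmax2} hold and Theorem~\ref{thm pruductmax} gives finite rank; since $\Lambda_2=\emptyset$ when $k_1=k_2=0$, the rank is $0$.

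For (b), if $T_{e^{ik\theta}\varphi_1}T_{e^{-ik\theta}\varphi_2}-T_{\psi}$ has finite rank, Theorem~\ref{thm relation} again makes $\big[T_{e^{ik\theta}\varphi_1},T_{e^{-ik\theta}\varphi_2}\big]$ finite rank, and Corollary~\ref{cor cradial}(c) then gives $|k|=1$ and $\varphi_1\ast_M\varphi_2=C(r\ast_M r^{-1})$ for some constant $C$ (necessarily $C\neq 0$). We may assume $k=1$, the case $k=-1$ being symmetric (or reducible to it by taking adjoints and using Theorem~\ref{thm eqvi}). With $k_1=1$, $k_2=-1$ we have $N_3=0$, so \eqref{pmax2} reads $2(n+2)\,\widehat{\varphi_1\ast_M\varphi_2}(2n+3)=\widehat{\psi}(2n+2)$ for $n\geq 0$; inserting $\widehat{\varphi_1\ast_M\varphi_2}(z)=C\,\widehat{r}(z)\widehat{r^{-1}}(z)=C/((z-1)(z+1))$ gives $\widehat{\psi}(2n+2)=C/(2n+2)=\widehat{C\mathbb I}(2n+2)$, whence $\psi=C$ with the same constant $C$, again by Mellin uniqueness. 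Conversely, if $|k|=1$, $\varphi_1\ast_M\varphi_2=C(r\ast_M r^{-1})$ and $\psi=C$, a direct substitution (using $\widehat{r}(z)=1/(z+1)$, $\widehat{r^{-1}}(z)=1/(z-1)$ and $\widehat{\mathbb I}(z)=1/z$) verifies \eqref{pmax1} and \eqref{pmax2}, so Theorem~\ref{thm pruductmax} applies, and $\Lambda_2=\emptyset$ forces the rank to be $0$.

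I do not expect an essential obstacle: all the structural constraints --- the value of $k$ and the form of $\varphi_1\ast_M\varphi_2$ --- are delivered by Corollary~\ref{cor cradial} through Theorem~\ref{thm relation}, and what remains is the bookkeeping of $N_2$, $N_3$, $\Lambda_2$ and of \eqref{pmax1}--\eqref{pmax2} in each case, the reduction from $k=-1$ to $k=1$, and the verification that the constant in $\varphi_1\ast_M\varphi_2=C(r\ast_M r^{-1})$ coincides with the one in $\psi=C$. That last check is the step that most rewards care.
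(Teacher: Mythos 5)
Your proof is correct, and its skeleton matches the paper's: both use Theorem~\ref{thm relation} together with Corollary~\ref{cor cradial} to force $k=0$ in (a) and $|k|=1$ with $\varphi_1\ast_M\varphi_2=C(r\ast_M r^{-1})$ in (b). Where you diverge is in how $\psi$ is pinned down. The paper first invokes the rank bound of Theorem~\ref{thm pruductmax}, $\max\{0,|k_1|-1,|k_2|-1,|k_1+k_2|-1\}=0$, to conclude that the generalized semicommutator is exactly zero, and then cites an external result ([DZ4, Corollary 3.1]) for the characterization of when $T_{\varphi_1}T_{\varphi_2}=T_{\psi}$, dispatching (b) with ``the proof is similar.'' You instead work directly from the Mellin criterion \eqref{pmax1}--\eqref{pmax2}: in (a) you rewrite $2(n+1)\widehat{\varphi_1}(2n+2)\widehat{\varphi_2}(2n+2)=\widehat{\psi}(2n+2)$ as $\widehat{\varphi_1\ast_M\varphi_2}=\widehat{\mathbb I\ast_M\psi}$ on an arithmetic progression and conclude by Mellin uniqueness, and in (b) you compute $\widehat{\psi}(2n+2)=C/(2n+2)$ explicitly, which makes the matching of the constant $C$ in $\psi=C$ with the one in $\varphi_1\ast_M\varphi_2=C(r\ast_M r^{-1})$ transparent --- a point the paper leaves implicit. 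Your route is more self-contained (no appeal to [DZ4]) at the cost of redoing a computation the reference already packages; both correctly conclude rank zero, you via $\Lambda_2=\emptyset$, the paper via the rank bound.
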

\begin{proof} First assume $T_{\varphi_{1}}T_{e^{ik\theta }\varphi_2}-T_{e^{ik\theta }\psi}$ has finite rank, then
combining Corollary~\ref{cor relation} with Corollary~\ref{cor cradial} we get $k=0$. Thus Proposition~\ref{prop pruductmax} gives that
$rank\left(T_{\varphi_{1}}T_{e^{ik\theta }\varphi_2}-T_{e^{ik\theta }\psi}\right)\leq0,$
and so $T_{\varphi_{1}}T_{e^{ik\theta }\varphi_2}=T_{e^{ik\theta }\psi}.$ Then by \cite[Corollary 3.1]{DZ4}, the above equation holds if and only if $\psi$ is a solution of the equation
$\mathbb{I}\ast_{M}\psi=\varphi_{1}\ast_{M}\varphi_{2},$ and hence condition (a) holds.

The proof of (b) is similar. This completes the proof.
\end{proof}

Next, we give our main result in this section, which completely characterize when the generalized semicommutator $T_{e^{ik_1\theta} r^m}T_{e^{ik_2\theta }\varphi}-T_{e^{i(k_1+k_2)\theta }\psi}$ to be finite rank.

\begin{theorem}\label{thm semicom}  Let $k_1,\;k_2\in\mathbb{Z}$, and let $m\in \mathbb{R}$, $m\geq -1$.
Then for nonzero radial T-functions $\varphi$ and $\psi$ on $D$, the generalized semicommutator $T_{e^{ik_1\theta} r^m}T_{e^{ik_2\theta }\varphi}-T_{e^{i(k_1+k_2)\theta }\psi}$ has finite rank on $L_h^2$ if and only if one of the following conditions holds:
\begin{enumerate}
\item[(1)]$k_1=m=0$ and $\psi=\varphi$.
\item[(2)]$k_2=0$, $\varphi=C$ and $\psi=Cr^m$ for some constant $C$.
\item[(3)]$k_1=k_2=0$ and $\psi(r)=\varphi(r)-mr^m\int_r^1{\frac{\varphi(t)}{t^{m+1}}dt}$.
\item[(4)]$k_1k_2=-1$, $\varphi=C\left(\frac{m+1}{2}r^{-1}-\frac{m-1}{2}r\right)$ and $\psi=C$ for some constant $C$.
\item[(5)]$k_1k_2<-1$, $|k_2|\geq 2$, $m+|k_1|=0$, ${\varphi}(r)=Cr^{|k_2|}$ and $\psi=Cr^{|k_2|-1}$.
\item[(6)]$k_1k_2<-1$, $|k_2|=1$, $\widehat{\varphi}(z)=
C\frac{\Gamma\left(\frac{z-1}{2|k_1|}\right)\Gamma\left(\frac{z+m+|k_1|+1}{2|k_1|}\right)}{\Gamma\left(\frac{z+2|k_1|+1}{2|k_1|}\right)
\Gamma\left(\frac{z+m+|k_1|-1}{2|k_1|}\right)}$
and $$\widehat{\psi}(z)=
C\frac{\Gamma\left(\frac{z+|k_1|-1}{2|k_1|}\right)\Gamma\left(\frac{z+m+1}{2|k_1|}\right)}{\Gamma\left(\frac{z+|k_1|+1}{2|k_1|}\right)
\Gamma\left(\frac{z+m+2|k_1|-1}{2|k_1|}\right)}.$$
\item[(7)]$k_1k_2>0$, $|k_2|<m+2$, $\widehat{\varphi}(z)=
C\frac{\Gamma\left(\frac{z+|k_2|}{2|k_1|}\right)\Gamma\left(\frac{z+m+|k_1|-|k_2|}{2|k_1|}\right)}{\Gamma\left(\frac{z+2|k_1|-|k_2|}{2|k_1|}\right)
\Gamma\left(\frac{z+m+|k_1|+|k_2|}{2|k_1|}\right)}$ and $$\widehat{\psi}(z)=C\frac{\Gamma\left(\frac{z+|k_1|+|k_2|}{2|k_1|}\right)\Gamma\left(\frac{z+m-|k_2|}{2|k_1|}\right)}{\Gamma\left(\frac{z+|k_1|-|k_2|}{2|k_1|}\right)
\Gamma\left(\frac{z+m+2|k_1|+|k_2|}{2|k_1|}\right)}.$$
\item[(8)]$k_1k_2>0$, $|k_2|\geq m+2$, $m=(2n+1)|k_1|$ for some $n\in\mathbb{N}$, $$\widehat{\varphi}(z)=C\frac{\left(\frac{z-|k_2|}{2|k_1|}+1\right)\left(\frac{z-|k_2|}{2|k_1|}+2\right)\cdots\left(\frac{z-|k_2|}{2|k_1|}+n\right)}
{\frac{z+|k_2|}{2|k_1|}\left(\frac{z+|k_2|}{2|k_1|}+1\right)\cdots\left(\frac{z+|k_2|}{2|k_1|}+n\right)}$$ and $$\widehat{\psi}(z)=C\frac{\frac{z+|k_1|-|k_2|}{2|k_1|}\left(\frac{z+|k_1|-|k_2|}{2|k_1|}+1\right)\cdots\left(\frac{z+|k_1|-|k_2|}{2|k_1|}+n-1\right)}
{\frac{z+|k_1|+|k_2|}{2|k_1|} \left(\frac{z+|k_1|+|k_2|}{2|k_1|}+1\right)\cdots\left(\frac{z+|k_1|+|k_2|}{2|k_1|}+n\right) }.$$
\end{enumerate}
In each condition $(1)$-$(4)$, $rank\left(T_{e^{ik_1\theta} r^m}T_{e^{ik_2\theta }\varphi}-T_{e^{i(k_1+k_2)\theta }\psi}\right)=0$. In each condition $(5)$-$(8)$,
$$Ran\left(T_{e^{ik_1\theta} r^m}T_{e^{ik_2\theta }\varphi}-T_{e^{i(k_1+k_2)\theta }\psi}\right)
    =\emph{span}\left\{r^{|k|}e^{ik\theta}: k \in {\Lambda_2} \right\}$$ and
$$T_{e^{ik_1\theta} r^m}T_{e^{ik_2\theta }\varphi}-T_{e^{i(k_1+k_2)\theta }\psi}=\sum\limits_{k \in {\Lambda_2} }{C_k \left(r^{|k|}e^{ik\theta}\right)  \otimes \left(r^{|k-k_1-k_2|}e^{i(k-k_1-k_2)\theta}\right) }
$$
for some nonzero constant $C_k$.
Therefore,
$$rank\left(T_{e^{ik_1\theta} r^m}T_{e^{ik_2\theta }\varphi}-T_{e^{i(k_1+k_2)\theta }\psi}\right)=\max\left\{|k_1|-1,|k_2|-1,|k_1+k_2|-1\right\}.$$
\end{theorem}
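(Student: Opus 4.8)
The plan is to run the argument in close parallel with the proof of Theorem~\ref{thm commute}, replacing Theorem~\ref{thm commutemax} by Theorem~\ref{thm pruductmax} and controlling the third symbol $\psi$ through Lemma~\ref{lem pvak} exactly as $\varphi$ was controlled through Lemma~\ref{lem vak}. I would first treat the case $k_1k_2=0$, in which one of the two symbols is radial: Corollary~\ref{cor pradial} (applied directly when $k_1=0$, and when $k_2=0$ after reversing the order of the factors via Theorem~\ref{thm eqvi}), the trivial identity $T_1T_{e^{ik_2\theta}\varphi}=T_{e^{ik_2\theta}\varphi}$ when $m=0$, and the explicit solution of $\mathbb{I}\ast_{M}\psi=r^m\ast_{M}\varphi$ obtained via \cite[Corollary 3.1]{DZ4} together force exactly one of conditions (1), (2), (3) to hold, and in each of them the product is a Toeplitz operator, so the rank is $0$.

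For the remaining case $k_1k_2\neq 0$ I would assume, after taking adjoints if necessary, that $k_1>0$. By Theorem~\ref{thm pruductmax}, finite rank of the generalized semicommutator is equivalent to \eqref{pmax1} holding for all $n\geq N_2$ together with \eqref{pmax2} holding for all $n\geq N_3$. Replacing $n$ by $n+k_1+k_2$ in \eqref{pmax2} and comparing with \eqref{pmax1} eliminates $\widehat{\psi}$ and reproduces exactly the recursion \eqref{rm} for $\widehat{\varphi}$, whose general solution is \eqref{Cgk} just as in Theorem~\ref{thm commute}. Feeding \eqref{Cgk} back into \eqref{pmax1} and using $\widehat{r^m}(z)=1/(z+m)$ together with the identity $w\,\Gamma(\frac{w}{2k_1})=2k_1\,\Gamma(\frac{w}{2k_1}+1)$ gives
$$\widehat{\psi}(z)=C\frac{\Gamma\left(\frac{z+k_1+k_2}{2k_1}\right)\Gamma\left(\frac{z+m-k_2}{2k_1}\right)}{\Gamma\left(\frac{z+k_1-k_2}{2k_1}\right)\Gamma\left(\frac{z+m+2k_1+k_2}{2k_1}\right)},$$
which is precisely the Mellin transform appearing in Lemma~\ref{lem pvak}. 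Requiring that both $\varphi$ and $\psi$ be T-functions, i.e. intersecting the admissible ranges supplied by Lemma~\ref{lem vak} and Lemma~\ref{lem pvak}, then produces exactly conditions (4)--(8); when $k_1k_2=-1$ the gamma quotient collapses to $\widehat{\psi}(z)=2k_1C/z$ and Theorem~\ref{thm pruductmax} gives rank $0$ (this is condition (4)), and the fact that the $\psi$-constraint is stricter than the $\varphi$-constraint is what tightens the exponent bound from $|k_2|<m+|k_1|+2$ in Theorem~\ref{thm commute} to $|k_2|<m+2$ here. Rewriting the gamma quotients as the finite products displayed in (5)--(8) is the bookkeeping already carried out inside Lemmas~\ref{lem vak} and~\ref{lem pvak}, and replacing $k_1>0$ by $|k_1|$ removes the sign normalization.

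It remains to establish the range, canonical form and rank under conditions (5)--(8). Theorem~\ref{thm pruductmax} already provides the inclusion $Ran\subseteq\emph{span}\{r^{|k|}e^{ik\theta}:k\in\Lambda_2\}$ and, through Lemma~\ref{lem comrank}, the displayed canonical form once the range is pinned down, while $|\Lambda_2|=N_2+N_3-1=\max\{|k_1|,|k_2|,|k_1+k_2|\}-1$ gives the rank. So the real work is to show that $\bigl(T_{e^{ik_1\theta}r^m}T_{e^{ik_2\theta}\varphi}-T_{e^{i(k_1+k_2)\theta}\psi}\bigr)(r^{|l|}e^{il\theta})\neq 0$ for every integer $l$ with $-N_3+1\leq l\leq N_2-1$. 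Evaluating this action by Lemma~\ref{lem calc} and inserting the formulas for $\widehat{\varphi}$ and $\widehat{\psi}$, the nonvanishing of the resulting coefficient reduces — after the substitutions $a=k_2/k_1$ and $b=(m+k_1)/(2k_1)$, exactly as in Cases 1--4 of the proof of Theorem~\ref{thm commute} — to the strict monotonicity statements of Lemma~\ref{lem G1} and Lemma~\ref{lem Gamma}. Two things distinguish this from the commutator computation: the index set $\Lambda_2$ carries no deleted symmetry number, so there is no monomial on which the operator vanishes automatically and the monotonicity argument must be run also at the midpoint index; and the case $k_1=k_2$, excluded from the main case of Theorem~\ref{thm commute}, must now be included, where $\varphi=Cr^m$ is forced and the range is read off directly from Lemma~\ref{lem calc}.

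I expect this last step to be the main obstacle: organizing the full case analysis for the range according to the signs and relative sizes of $k_1$ and $k_2$ and, in each case, correctly identifying which branch of Lemma~\ref{lem G1} or Lemma~\ref{lem Gamma} applies and on which interval its argument (essentially $n/k_1$, possibly shifted by $1/k_1$) lies, so that strict monotonicity genuinely forces the coefficient to be nonzero at every admissible $l$. By comparison, eliminating $\widehat{\psi}$, recognizing $\widehat{\psi}$ as the transform of Lemma~\ref{lem pvak}, and turning the gamma quotients into finite products are routine repetitions of computations already made in Sections~3 and~4.
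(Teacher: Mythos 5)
Your treatment of the characterization (the ``if and only if'' list) is essentially the paper's proof, with one small and arguably cleaner variation: you eliminate $\widehat{\psi}$ by shifting $n\mapsto n+k_1+k_2$ in \eqref{pmax2} and comparing with \eqref{pmax1} to recover \eqref{rm} directly, whereas the paper gets \eqref{rm} by invoking Theorem~\ref{thm relation} together with Theorem~\ref{thm commute}. Either way one lands on \eqref{Cgk}, then on $\widehat{\psi }(z)=(z-k_1+k_2)\widehat{r^m}(z+k_2)\widehat{\varphi}(z-k_1)$, i.e.\ the transform of Lemma~\ref{lem pvak}, and the intersection of Lemmas~\ref{lem vak} and~\ref{lem pvak} gives (5)--(8); your reading of the tightened bound $|k_2|<m+2$ and of the collapse to condition (4) when $k_1k_2=-1$ is correct. (For sufficiency you should still note, as the paper does, that \eqref{pmax2} obtained by the shift is only directly available for $n\geq\max\{k_1,k_1+k_2\}$ and must be extended down to $n\geq N_3$ by the arithmetic-sequence/Mellin argument of Theorem~\ref{thm commutemax}.)

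The genuine gap is in the range computation, which you correctly flag as the main obstacle but then propose to resolve ``exactly as in Cases 1--4 of the proof of Theorem~\ref{thm commute}.'' That transfer does not work, because the equation to be contradicted has a different structure. In the commutator case one contradicts a \emph{symmetric} identity $F(n)=F(k_1+k_2-n)$ (or $G(x-\tfrac{1}{k_1})=G(-\tfrac{1}{k_1})$), where monotonicity of $F$ or $G$ alone suffices. Here, for interior $l$, Lemma~\ref{lem calc} reduces the vanishing of $(T_{e^{ik_1\theta} r^m}T_{e^{ik_2\theta }\varphi}-T_{e^{i(k_1+k_2)\theta }\psi})(r^{|l|}e^{il\theta})$ to an equation of the form $F(n)\,\widehat{r^m}(k_1+2)=\widehat{\psi}(k_1+k_2+2)$ (or its analogues with $H$ and $P$), whose right-hand side is a \emph{constant}. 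Strict monotonicity of $F$ tells you nothing until you identify that constant as the value of the same monotone function at the endpoint $n=N_2$ or $n=N_3$ --- and that identification comes precisely from evaluating the boundary instance of \eqref{pmax1} at $n=N_2$ or of \eqref{pmax2} at $n=N_3$, which is the step missing from your plan. Once the constant is pinned to the endpoint value, strict monotonicity on the interior interval excludes every admissible $l$. Note also that the toolkit is not the one you anticipate: the paper's argument uses Lemma~\ref{lem G1} only for the $F$-subcases, handles the remaining subcases with two explicitly computed monotone rational functions $H(x)$ and $P(x)$, and never needs Lemma~\ref{lem Gamma} in this theorem.
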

\begin{proof}First we suppose $k_1 k_2=0$, or $k_1k_2=-1$. Then Corollary~\ref{cor pradial} shows that $T_{e^{ik_1\theta} r^m}T_{e^{ik_2\theta }\varphi}-T_{e^{i(k_1+k_2)\theta }\psi}$ has finite rank if and only if one of the conditions (1)-(4) holds. Moreover, in each condition, \cite[Corollary 3.1]{DZ4} shows that
$T_{e^{ik_1\theta} r^m}T_{e^{ik_2\theta }\varphi}=T_{e^{i(k_1+k_2)\theta }\psi}$, as desired.

Next, we suppose $k_1k_2\neq0$ and $k_1k_2\neq-1$. Combining Theorem~\ref{thm eqvi} with the use of adjoint operators, we can further assume $k_1>0$. Then by Proposition~\ref{prop pruductmax}, $T_{e^{ik_1\theta} r^m}T_{e^{ik_2\theta }\varphi}-T_{e^{i(k_1+k_2)\theta }\psi}$ has finite rank if and only if
\begin{equation}\label{prmax1}
2(n+k_2+1)\widehat{r^m}(2n+k_1+2k_2+2)\widehat{\varphi}(2n+k_2+2)=\widehat{\psi }(2n+k_1+k_2+2)
\end{equation}
holds for all $n\geq N_2=\max\left\{0,-k_2,\right\}$, and
\begin{equation}\label{prmax2}
2(n-k_2+1)\widehat{r^m}(2n-k_1-2k_2+2)\widehat{\varphi}(2n-k_2+2)=\widehat{\psi }(2n-k_1-k_2+2)
\end{equation} holds for all $n\geq N_3=\max\left\{0,k_1+k_2\right\}$.

Now, we assume $T_{e^{ik_1\theta} r^m}T_{e^{ik_2\theta }\varphi}-T_{e^{i(k_1+k_2)\theta }\psi}$ has finite rank. Then by Corollary~\ref{cor relation},  the commutator $\left[T_{e^{ik_1\theta} r^m},T_{e^{ik_2\theta }\varphi}\right]$ has finite rank, and hence one of the conditions (4) and (6)-(9) of Theorem~\ref{thm commute} holds.
On the other hand, \eqref{prmax1} implies
$$\widehat{\psi }(z)=(z-k_1+k_2)\widehat{r^m}(z+k_2)\widehat{\varphi}(z-k_1).
$$
Thus, by \eqref{Cgk} we get
\begin{align*}
   \widehat{\psi }(z)&=C\frac{z-k_1+k_2}{z+m+k_2}\frac{\Gamma\left(\frac{z+k_2-k_1}{2k_1}\right)\Gamma\left(\frac{z+m-k_2}{2k_1}\right)}{\Gamma\left(\frac{z+k_1-k_2}{2k_1}\right)
\Gamma\left(\frac{z+m+k_2}{2k_1}\right)}=C\frac{\Gamma\left(\frac{z+k_1+k_2}{2k_1}\right)\Gamma\left(\frac{z+m-k_2}{2k_1}\right)}{\Gamma\left(\frac{z+k_1-k_2}{2k_1}\right)
\Gamma\left(\frac{z+m+2k_1+k_2}{2k_1}\right)}.
\end{align*}
As in the proof of Lemma~\ref{lem vak}, it is easy to verify that
$\psi(r)$ is a T-function if and only if one of the following conditions holds:
\begin{itemize}
  \item $k_2\leq-k_1-2$ and $m+k_1=0$.
  \item $-k_1-2<k_2<m+2$.
  \item $k_2\geq m+2$ and $m=(2n+1)k_1$ for some $n\in\mathbb{N}$.
\end{itemize}
Then combining with Theorem~\ref{thm commute}, it is easy to show that one of the conditions (5)-(8) holds.

Conversely, if one of the conditions (5)-(8) holds, then \eqref{rm} and \eqref{prmax1} hold for all $n\geq N_1=N_2=\max\left\{0,-k_2,\right\}$. From \eqref{rm} and \eqref{prmax1} we can deduce
$$2(n+k_1+1)\widehat{r^m}(2n+k_1+2)\widehat{\varphi}(2n+2k_1+k_2+2)=\widehat{\psi }(2n+k_1+k_2+2)$$ holds for all $n\geq \max\left\{0,-k_2,\right\}$.
Replacing $n$ by $n-k_1-k_2$, then the above equation implies that \eqref{prmax2} holds for all $n\geq \max\left\{k_1,k_1+k_2,\right\}$.  As in the proof of Proposition~\ref{prop commutemax}, we get \eqref{prmax2} holds for all $n\geq N_3=\max\left\{0,k_1+k_2\right\}$, and hence $T_{e^{ik_1\theta} r^m}T_{e^{ik_2\theta }\varphi}-T_{e^{i(k_1+k_2)\theta }\psi}$ has finite rank.

According to Proposition~\ref{prop pruductmax}, to finish the proof we only need to show that, for any $l\in \left\{-N_3+1,\cdots,N_2-1\right\},$
$\left(T_{e^{ik_1\theta} r^m}T_{e^{ik_2\theta }\varphi}-T_{e^{i(k_1+k_2)}\psi}\right)\left(r^{|l|}e^{il\theta}\right)=C_lr^{|l+k_1+k_2|}e^{i\left(l+k_1+k_2\right)\theta}$ must hold for some nonzero constant $C_l$. To show this we need to discuss three cases.

\emph{Case} 1. Suppose $k_1=-m=1$, $k_2\leq -2$.
Thus $N_2=-k_2$ and $N_3=0$.
So for any $0< n\leq-k_2-1$, by Lemma~\ref{lem calc} we get
\begin{align}\label{prmg1}
    &\left(T_{e^{ik_1\theta} r^m}T_{e^{ik_2\theta}\varphi}-T_{{e^{i(k_1+ k_2)\theta}\psi}}\right)({z}^{n})=0\nonumber\\
&\Longleftrightarrow 2(-k_2-n+1)\widehat{r^m}(-k_1-2k_2-2n+2)\widehat{\varphi}(-k_2+2)=\widehat{\psi}(-k_1-k_2+2).
\end{align}
Denote
\begin{align*}
H(x)&=2(-k_2-x+1)\widehat{r^m}(-k_1-2k_2-2x+2)\\
&=\frac{2(-k_2-x+1)}{m-k_1-2k_2-2x+2}=\frac{1}{1-\frac{1}{-k_2-x+1}},
\end{align*} which implies that $H(x)$ is strictly monotone increasing on $[0,-k_2)$.
Hence $$H(0)<H(1)<\cdots<H(-k_2-1).$$
On the other hand, by taking $n=0$ in \eqref{prmax2}, we obtain $$H(0)\widehat{\varphi }(-k_2+2)=\widehat{\psi }(-k_1-k_2+2),$$
Since $\widehat{\varphi }(-k_2+2)\neq 0$, it follows that \eqref{prmg1} would not hold, as desired.

\emph{Case} 2. Suppose $k_1>0$ and $k_2=-1$.
Thus $N_2=1$ and
$N_3=k_1+k_2$. So for any $0\leq n< k_1+k_2$, by Lemma~\ref{lem calc} we get
\begin{align}\label{prmg2}
    &\left(T_{e^{ik_1\theta} r^m}T_{e^{ik_2\theta}\varphi}-T_{{e^{i(k_1+ k_2)\theta}\psi}}\right)(\overline{z}^{n})=0\nonumber\\
&\Longleftrightarrow  2(n-k_2+1)\widehat{r^m }(k_1+2)\widehat{\varphi}(2n-k_2+2)=\widehat{\psi }(k_1+k_2+2)\nonumber\\
&\Longleftrightarrow  F(n)\widehat{r^m }(k_1+2)=\widehat{\psi }(k_1+k_2+2),
\end{align}
where $F(x)$ is defined by \eqref{F}.
Since $a=\frac{k_2}{k_1}<0$ and $b=\frac{m+k_1}{2k_1}>0,$
it follows that $F(x)$ is strictly monotone decreasing on $(-1,+\infty)$.
Thus, $$F(0)>F(1)>\cdots>F(k_1+k_2).$$
On the other hand, by taking $n=k_1+k_2$ in \eqref{prmax2}, we obtain $$F(k_1+k_2)\widehat{r^m}(k_1+2)=\widehat{\psi }(k_1+k_2+2),$$
and hence \eqref{prmg2} would not hold.

\emph{Case} 3. Suppose $k_1k_2>0$. Moreover, both condition (7) and condition (8) imply that $m+k_1\neq 0$.
Thus $N_2=0$ and $N_3=k_1+k_2$.

First, for any $1\leq n\leq k_2$, by Lemma~\ref{lem calc} we get
\begin{align}\label{prmg3}
    &\left(T_{e^{ik_1\theta} r^m}T_{e^{ik_2\theta}\varphi}-T_{{e^{i(k_1+ k_2)\theta}\psi}}\right)(\overline{z}^{n})=0\nonumber\\
&\Longleftrightarrow  2(k_2 -n+1)\widehat{r^m}(k_1+2k_2-2n+2)\widehat{\varphi }(k_2+2)=\widehat{\psi }(k_1+k_2+2).
\end{align}
Denote
\begin{align*}
P(x)&=2(k_2 -x+1)\widehat{r^m}(k_1+2k_2-2x+2)\\
&=\frac{2(k_2 -x+1)}{m+k_1+2k_2-2x+2}=\frac{1}{\frac{m+k_1}{2(k_2 -x+1)}+1},
\end{align*} which implies that $P(x)$ is strictly monotone increasing on $[0,+\infty)$.
Hence $$P(0)<P(1)<\cdots<P(k_2).$$
On the other hand,  by taking $n=0$ in \eqref{prmax1}, we obtain $$P(0)\widehat{\varphi }(k_2+2)=\widehat{\psi }(k_1+k_2+2).$$
Since $\widehat{\varphi }(k_2+2)\neq 0$, it follows that \eqref{prmg3} would not hold.

Next, for any $k_2\leq n<k_1+k_2$, by Lemma~\ref{lem calc} we get
\begin{align}\label{prmg4}
    &\left(T_{e^{ik_1\theta} r^m}T_{e^{ik_2\theta}\varphi}-T_{{e^{i(k_1+ k_2)\theta}\psi}}\right)(\overline{z}^{n})=0\nonumber\\
&\Longleftrightarrow  2(n-k_2+1)\widehat{r^m }(k_1+2)\widehat{\varphi}(2n-k_2+2)=\widehat{\psi }(k_1+k_2+2)\nonumber\\
&\Longleftrightarrow F(n)\widehat{r^m }(k_1+2)=\widehat{\psi }(k_1+k_2+2).
\end{align}
Since $a=\frac{k_2}{k_1}>0$ and $b=\frac{m+k_1}{2k_1}>0,$
it follows that $F(x)$ is strictly monotone increasing on $(k_2-1,+\infty)$.
Thus, $$F(k_2)<F(k_2+1)<\cdots<F(k_1+k_2).$$
On the other hand, by taking $n=k_1+k_2$ in \eqref{prmax2}, we obtain $$F(k_1+k_2)\widehat{r^m}(k_1+2)=\widehat{\psi }(k_1+k_2+2),$$
and hence \eqref{prmg4} would not hold, as desired. This completes the proof.
\end{proof}

Below we present the examples of nonzero generalized semicommutators of finite rank, corresponding to the conditions (5)-(8) of Theorem~\ref{thm semicom}.

\begin{example}\label{ex semic}  Let $\varphi(r)$
be a nonzero radial T-function on $D$, then on $L_h^2$, the following statements hold.
\begin{enumerate}
\item[(1)] $T_{e^{i\theta} r^{-1}}T_{e^{-3i\theta}\varphi(r)}-T_{{e^{-2i\theta}\psi}}$ has finite rank if and only if
$$\varphi(r)=Cr^3\ \ \textrm{and}\ \  \psi(r)=Cr^2.$$ In this case, $T_{e^{i\theta} r^{-1}}T_{e^{-3i\theta}\varphi(r)}-T_{{e^{-2i\theta}\psi}}=C\left[\frac{1}{2}\left(1 \otimes z^2\right)+\frac{1}{6}\left(\overline{z}\otimes z\right)\right].$
\item[(2)] $T_{e^{2i\theta} r^{6}}T_{e^{-i\theta}\varphi(r)}-T_{{e^{i\theta}\psi}}$ has finite rank if and only if $$\varphi(r)=C\left(\frac{3}{r}-r^3\right)\ \ \textrm{and}\ \  \psi(r)=C(r+r^5).$$ In this case,
    $T_{e^{2i\theta} r^{6}}T_{e^{-i\theta}\varphi(r)}-T_{{e^{i\theta}\psi}}=\frac{19C}{30}\left(z\otimes 1\right).$
\item[(3)]Let $m\in \mathbb{R}$, $m\geq -1$. Then $T_{e^{i\theta} r^{m}}T_{e^{2i\theta}\varphi(r)}-T_{{e^{3i\theta}\psi}}$ has finite rank if and only if $m>0$, $$\varphi(r)=C\left[(m+1)r^{m+1}-(m-1)r^{m-1}\right] \ \textrm{and}$$
$$\psi(r)=\frac{C}{4}\left[(m-3)(m-1)r^{m-2}-(m-1)(m+1)r^{m}+(m+1)(m+3)r^{m+2}\right].$$
In this case,
$T_{e^{i\theta} r^{m}}T_{e^{2i\theta}\varphi(r)}-T_{{e^{3i\theta}\psi}}=-\frac{192(m+1)C}{(m+7)(m+5)(m+3)}\left[\frac{2}{m+3}\left(z\otimes \overline{z}^2\right)+\frac{1}{m+5}\left(z^2\otimes \overline{z}\right)\right].$
\item[(4)]$T_{e^{i\theta} r^{3}}T_{e^{6i\theta}\varphi(r)}-T_{{e^{7i\theta}\psi}}$ has finite rank if and only if $$\varphi(r)=C\left[6r^{8}-5r^{6}\right]\ \ \textrm{and} \ \     \psi(r)=C\left[7r^{9}-6r^{7}\right].$$ In this case,
\begin{align*}
T_{e^{i\theta} r^{3}}T_{e^{6i\theta}\varphi(r)}-T_{{e^{7i\theta}\psi}}=&C\left[-\frac{2}{9}\left({z}\otimes \overline{z}^6\right)-\frac{1}{72}\left({z}^6\otimes \overline{z}\right)-\frac{5}{28}\left({z}^2\otimes \overline{z}^5\right)\right.\\
&\ \ \ \ \left.-\frac{2}{49}\left({z}^5\otimes \overline{z}^2\right)-\frac{8}{63}\left({z}^3\otimes \overline{z}^4\right)-\frac{5}{63}\left({z}^4\otimes \overline{z}^3\right)\right].
\end{align*}
\end{enumerate}
\end{example}

Finally, we show some interesting applications of Theorem~\ref{thm semicom}.

\begin{corollary}\label{cor semianaly}  Let $k_1,\;k_2\in\mathbb{Z}$ such that $k_1>0$ or $k_1=-1$,
and let $\varphi(r),\;\psi(r)$ be two nonzero radial T-functions on $D$. Then on $L_h^2$, the following statements hold.
\begin{enumerate}
\item[(a)] $T_{e^{ik_1\theta}r^{k_1}} T_{e^{ik_2\theta}\varphi(r)}-T_{{e^{i(k_1+k_2)\theta}\psi}}$ has finite rank if and only if $$k_2>\max\{-2,-k_1-2\},\ \ \varphi(r)=Cr^{k_2}\ \ \textrm{and}\ \ \psi(r)=Cr^{k_1+k_2}.$$
\item[(b)] $T_{e^{-ik_1\theta}r^{k_1}}T_{e^{ik_2\theta}\varphi(r)}-T_{{e^{i(-k_1+k_2)\theta}\psi}}$ has finite rank if and only if $$k_2<\min\{2,k_1+2\},\ \ \varphi(r)=Cr^{-k_2}\ \ \textrm{and}\ \ \psi(r)=Cr^{k_1-k_2}.$$
\end{enumerate}
\end{corollary}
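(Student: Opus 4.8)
The plan is to reduce to Theorem~\ref{thm semicom} by taking $m=k_{1}$ there, so that $e^{ik_{1}\theta}r^{k_{1}}=e^{ik_{1}\theta}r^{m}$ with $m=k_{1}\geq-1$ (which is exactly why we assume $k_{1}>0$ or $k_{1}=-1$), and then to identify which of the eight conditions (1)--(8) of that theorem can survive this specialization. Conditions (1) and (3) drop out because $k_{1}\neq0$. If $k_{1}>0$ then condition (5) is impossible since $m+|k_{1}|=2k_{1}>0$, and condition (8) forces $(2n+1)|k_{1}|=m=k_{1}$, hence $n=0$. If $k_{1}=-1$ then conditions (6), (7), (8) are all incompatible with the standing hypotheses (they would force $k_{2}>1$ together with $|k_{2}|=1$, or $k_{2}=0$ together with $k_{1}k_{2}\neq0$, or $2n+1=m=-1$ with $n\in\mathbb{N}$, respectively). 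So only finitely many sub-cases remain, and in each of them I would substitute $m=k_{1}$ into the explicit Gamma-quotient expressions for $\widehat{\varphi}$ and $\widehat{\psi}$.

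The key observation is that every surviving expression telescopes. For instance, plugging $m=k_{1}$ into condition (7) gives
$$\widehat{\varphi}(z)=C\,\frac{\Gamma\!\left(\frac{z+|k_{2}|}{2|k_{1}|}\right)\Gamma\!\left(\frac{z+2|k_{1}|-|k_{2}|}{2|k_{1}|}\right)}{\Gamma\!\left(\frac{z+2|k_{1}|-|k_{2}|}{2|k_{1}|}\right)\Gamma\!\left(\frac{z+2|k_{1}|+|k_{2}|}{2|k_{1}|}\right)}=\frac{2|k_{1}|C}{z+k_{2}}$$
after the cancellation and one application of $\Gamma(x+1)=x\Gamma(x)$, and likewise $\widehat{\psi}(z)=2|k_{1}|C/(z+k_{1}+k_{2})$; the same cancellation occurs in conditions (6) and in (8) with $n=0$, while in conditions (2) and (4) one substitutes $m=k_{1}$ directly (note that for $m=k_{1}$ the symbol $\varphi$ of (4) collapses to a single monomial, namely $Cr^{-1}$ when $k_{1}=1$ and $Cr$ when $k_{1}=-1$), and in (5) with $k_{1}=-1$ one has $\varphi=Cr^{|k_{2}|}=Cr^{k_{2}}$ and $\psi=Cr^{|k_{2}|-1}=Cr^{k_{1}+k_{2}}$. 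Reading off the radial function from its Mellin transform exactly as in Lemma~\ref{lem vak} (via $\widehat{r^{\alpha}}(z)=1/(z+\alpha)$), each surviving case thus yields $\varphi(r)=C'r^{k_{2}}$ and $\psi(r)=C'r^{k_{1}+k_{2}}$ with a common constant $C'$. Conversely, given $\varphi=Cr^{k_{2}}$ and $\psi=Cr^{k_{1}+k_{2}}$, the value of $k_{2}$ singles out exactly one surviving sub-case (condition (2) for $k_{2}=0$; (4) or (6) for $k_{2}=-1$; (7) for $1\leq k_{2}\leq|k_{1}|+1$; (8) with $n=0$ for $k_{2}\geq|k_{1}|+2$, when $k_{1}>0$; and (2), (4), (5) together covering $k_{2}\geq0$ when $k_{1}=-1$), so Theorem~\ref{thm semicom} gives finite rank. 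Taking the union of the admissible $k_{2}$ over all surviving sub-cases gives $k_{2}\geq-1$ when $k_{1}>0$ and $k_{2}\geq0$ when $k_{1}=-1$, that is, precisely $k_{2}>\max\{-2,-k_{1}-2\}$; this proves (a).

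For (b) I would pass to adjoints. Since $T_{u}^{*}=T_{\overline{u}}$ and $\overline{e^{-ik_{1}\theta}r^{k_{1}}}=e^{ik_{1}\theta}r^{k_{1}}$,
$$\left(T_{e^{-ik_{1}\theta}r^{k_{1}}}T_{e^{ik_{2}\theta}\varphi}-T_{e^{i(-k_{1}+k_{2})\theta}\psi}\right)^{*}=T_{e^{-ik_{2}\theta}\overline{\varphi}}\,T_{e^{ik_{1}\theta}r^{k_{1}}}-T_{e^{i(k_{1}-k_{2})\theta}\overline{\psi}},$$
which has finite rank if and only if the original operator does. By Theorem~\ref{thm eqvi} the right-hand side has finite rank if and only if $T_{e^{ik_{1}\theta}r^{k_{1}}}T_{e^{-ik_{2}\theta}\overline{\varphi}}-T_{e^{i(k_{1}-k_{2})\theta}\overline{\psi}}$ does, and part (a) applied with $k_{2}$ replaced by $-k_{2}$ (and $\varphi,\psi$ replaced by $\overline{\varphi},\overline{\psi}$) says this happens if and only if $-k_{2}>\max\{-2,-k_{1}-2\}$, $\overline{\varphi}(r)=Cr^{-k_{2}}$ and $\overline{\psi}(r)=Cr^{k_{1}-k_{2}}$, i.e. $k_{2}<\min\{2,k_{1}+2\}$, $\varphi(r)=Cr^{-k_{2}}$ and $\psi(r)=Cr^{k_{1}-k_{2}}$ after renaming the constant.

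Finally, the main obstacle is organizational rather than conceptual: one must track carefully, for each of the two regimes $k_{1}>0$ and $k_{1}=-1$, which sub-cases of Theorem~\ref{thm semicom} remain live, carry out the (routine but slightly delicate) Gamma-function cancellation in each, and check that the union of the resulting admissible exponents $k_{2}$ matches $\max\{-2,-k_{1}-2\}$ on the nose, including the boundary values $k_{2}=-1$ (from (4) when $k_{1}=1$ and from (6) when $k_{1}\geq2$) and $k_{2}=|k_{1}|+2$ (from (8) with $n=0$).
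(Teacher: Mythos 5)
Your proposal is correct and follows essentially the same route as the paper: specialize Theorem~\ref{thm semicom} with $m=k_1$, observe that in every surviving sub-case the Gamma-quotients for $\widehat{\varphi}$ and $\widehat{\psi}$ telescope to $C/(z+k_2)$ and $C/(z+k_1+k_2)$, take the union of the admissible values of $k_2$, and deduce (b) from (a) via adjoints and Theorem~\ref{thm eqvi}. The only (harmless) divergence is that for $k_1=-1$ the paper lists condition (7) among the surviving cases while you correctly note it is vacuous there, since $|k_2|<m+2=1$ forces $k_2=0$, contradicting $k_1k_2>0$; either way the admissible set is $k_2\geq 0$, so the conclusion is unaffected.
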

\begin{proof}
If $k_1>0$, then by Theorem~\ref{thm semicom}, $T_{e^{ik_1\theta}r^{k_1}} T_{e^{ik_2\theta}\varphi(r)}-T_{{e^{i(k_1+k_2)\theta}\psi}}$ has finite rank if and only if $k_2$ satisfies one of the conditions (2), (4) and (6)-(8) and $$\widehat{\varphi}(z)=
\frac{C}{ z+k_2} \ \ \textrm{and}\ \  \widehat{\psi}(z)=\frac{C}{ z+k_1+k_2}.$$ In other words,
$k_2>-2$, $\varphi(r)=Cr^{k_2}$ and $\psi(r)=Cr^{k_1+k_2}.$

Similarly, if $k_1=-1$, then $T_{e^{ik_1\theta}r^{k_1}} T_{e^{ik_2\theta}\varphi(r)}-T_{{e^{i(k_1+k_2)\theta}\psi}}$ has finite rank if and only if one of the conditions (2), (4), (5) and (7) holds, which implies $k_2>-k_1-2,$ $\varphi(r)=Cr^{k_2}$ and $\psi(r)=Cr^{k_1+k_2}$, as desired.

Condition (a) together with Theorem~\ref{thm eqvi} and the use of
adjoint operators yields that condition (b) holds. This completes the proof.
\end{proof}

\begin{corollary}\label{cor semimono}  Let $k_1,\;k_2\in\mathbb{Z}$ such that $k_1k_2\neq 0$, and let $m_1,\;m_2\in \mathbb{R}$ such that greater than or equal to $-1$.
Then $T_{e^{ik_1\theta }r^{m_1}}T_{e^{ik_2\theta }r^{m_2}}-T_{{e^{i(k_1+k_2)\theta}\psi}}$ has finite rank on $L_h^2$ if and only if
one of the following conditions holds:
\begin{enumerate}
\item[(1)]$k_1=k_2$, $m_1=m_2$, $|k_1|<m_1+2$ and $\psi(r)=\frac{m_1+|k_1|}{2|k_1|}r^{m_1+|k_1|}-\frac{m_1-|k_1|}{2|k_1|}r^{m_1-|k_1|}.$
\item[(2)]$k_1=m_1,\ k_2=m_2,\ k_1+k_2\neq-2$ and $\psi(r)=r^{k_1+k_2}$.
\item[(3)]$k_1=-m_1,\ k_2=-m_2,\ k_1+k_2\neq2$ and $\psi(r)=r^{-k_1-k_2}$.
\end{enumerate}
\end{corollary}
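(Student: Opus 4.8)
The plan is to deduce this from Theorem~\ref{thm semicom} applied with $m=m_1$ and $\varphi(r)=r^{m_2}$, whose Mellin transform is $\widehat{\varphi}(z)=\tfrac{1}{z+m_2}$; this is completely parallel to the way Corollary~\ref{cor comr} is obtained from Theorem~\ref{thm commute}. By Theorem~\ref{thm eqvi} together with passing to adjoints I may assume $k_1>0$: the substitution $(k_1,k_2,m_1,m_2,\psi)\mapsto(-k_1,-k_2,m_1,m_2,\overline{\psi})$ preserves the list of candidate conditions, interchanging $(2)$ and $(3)$ and exchanging the roles of the requirements $k_1+k_2\neq-2$ and $k_1+k_2\neq2$, so nothing is lost. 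Since $k_1k_2\neq0$, conditions $(1)$--$(3)$ of Theorem~\ref{thm semicom} cannot occur, and it then remains, for each of its conditions $(4)$--$(8)$, to decide when the $\varphi$ prescribed there equals $r^{m_2}$ and, when it does, to read off the accompanying $\psi$.

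The heart of the matter is a Mellin-transform computation of exactly the kind used in the proof of Corollary~\ref{cor comr}. In each of conditions $(4)$--$(8)$ the transform $\widehat{\varphi}(z)$ is, up to the free constant $C$, the balanced ratio \eqref{Cgk} (condition $(4)$ is the instance $k_1=1$, condition $(5)$ the degenerate instance $m_1+k_1=0$, and conditions $(6)$--$(8)$ further instances), i.e. a function of the shape
\[
\frac{\Gamma\!\left(\tfrac{z+a_1}{2k_1}\right)\Gamma\!\left(\tfrac{z+a_2}{2k_1}\right)}
{\Gamma\!\left(\tfrac{z+b_1}{2k_1}\right)\Gamma\!\left(\tfrac{z+b_2}{2k_1}\right)},\qquad a_1+a_2=b_1+b_2-2k_1 .
\]
I would argue that, since its zeros and poles lie in arithmetic progressions of common difference $2k_1$, such a ratio can coincide with $\tfrac{1}{z+m_2}$ only if one of its two $\Gamma$-quotients is identically $1$, i.e. $a_i=b_j$ for some matching of indices; the balance relation then forces the other quotient to be $\Gamma(w)/\Gamma(w+1)=1/w$, the ratio collapses to $\tfrac{c}{z+d}$, and matching the pole and the constant pins down $m_2$ and $C$. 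Running through the essentially different matchings $\{a_1=b_1\}$, $\{a_2=b_1\}$, $\{a_1=b_2\}$ (the fourth being equivalent to the first by the balance relation) and using throughout that $m_1\geq-1$ and $k_1\geq1$, I expect to obtain that $\widehat{\varphi}(z)=\tfrac{1}{z+m_2}$ forces exactly one of
\[
k_1=k_2\ \text{and}\ m_1=m_2;\qquad k_1=m_1\ \text{and}\ k_2=m_2;\qquad k_1=-m_1\ (\text{so }k_1=1,\ m_1=-1)\ \text{and}\ k_2=-m_2 ,
\]
the remaining matchings with $a_i-b_j\in 2k_1\mathbb{Z}\setminus\{0\}$ being excluded by these inequalities or falling back into one of the three cases.

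Finally I would match back with Theorem~\ref{thm semicom} to settle both directions. In the first alternative one is necessarily in its condition $(7)$ --- condition $(8)$ would force $m_1=k_1$ and hence the impossible $k_1\geq k_1+2$ --- which carries the constraint $|k_1|<m_1+2$; evaluating the $\widehat{\psi}$-formula of condition $(7)$ at the pinned parameters should collapse it to $\widehat{\psi}(z)=z\big/\bigl[(z+m_1+|k_1|)(z+m_1-|k_1|)\bigr]$, i.e. $\psi(r)=\tfrac{m_1+|k_1|}{2|k_1|}r^{m_1+|k_1|}-\tfrac{m_1-|k_1|}{2|k_1|}r^{m_1-|k_1|}$, and $|k_1|<m_1+2$ is precisely what keeps this $\psi$ in $L^1([0,1],r\,dr)$. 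In the second alternative the relevant condition is $(4),(6),(7)$ or $(8)$ according to the sign and size of $k_2$, the $\widehat{\psi}$-formula reduces to $\tfrac{1}{z+k_1+k_2}$, giving $\psi(r)=r^{k_1+k_2}$; in the third it is $(4)$ or $(5)$, giving $\psi(r)=r^{-k_1-k_2}$, the exceptional value within this alternative being $(|k_1|,|k_2|,m_1,m_2)=(1,1,-1,-1)$, i.e. $k_1+k_2=2$, at which no admissible numbered condition of Theorem~\ref{thm semicom} survives --- this is the origin of the requirements $k_1+k_2\neq\pm2$. Conversely, each of the three displayed cases places one in the appropriate numbered case of Theorem~\ref{thm semicom}, which then directly yields finite rank. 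The main obstacle I anticipate is the bookkeeping of the middle step: isolating exactly those matchings of $\Gamma$-arguments that produce a single simple fraction while ruling out the spurious ones, and tracking the inequalities ($m_1\geq-1$, the sign of $k_2$, the ``$<m+2$'' thresholds of Theorem~\ref{thm semicom}) that are responsible for the side conditions.
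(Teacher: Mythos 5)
Your proposal is correct, but it takes a recognizably different route through the paper's machinery for the necessity direction. The paper does not redo the $\Gamma$-quotient matching at all: it first invokes Theorem~\ref{thm relation} to pass from the finite-rank generalized semicommutator to the finite-rank commutator $\left[T_{e^{ik_1\theta}r^{m_1}},T_{e^{ik_2\theta}r^{m_2}}\right]$, and then quotes Corollary~\ref{cor comr} --- where the collapse of the balanced ratio \eqref{Cgk} to $\frac{1}{z+m_2}$ has already been analyzed --- to land immediately in the three cases $k_1=k_2,\ m_1=m_2$; $k_1=m_1,\ k_2=m_2$; $k_1=-m_1,\ k_2=-m_2$. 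Within each case it then pins down $\psi$ and the side conditions much as you do: case 1 via condition (7) of Theorem~\ref{thm semicom} and the partial-fraction decomposition of $z/\bigl[(z+m_1-|k_1|)(z+m_1+|k_1|)\bigr]$, and cases 2 and 3 via Corollary~\ref{cor semianaly}, whose hypotheses $k_2>\max\{-2,-k_1-2\}$ and $k_2<\min\{2,k_1+2\}$ are exactly what translate into $k_1+k_2\neq-2$ and $k_1+k_2\neq2$. Your route --- working directly from conditions (4)--(8) of Theorem~\ref{thm semicom} and re-deriving the matching of $\Gamma$-arguments --- buys independence from Corollaries~\ref{cor comr} and~\ref{cor semianaly}, at the cost of repeating the bookkeeping you rightly identify as the main burden; the paper's detour through the commutator turns that bookkeeping into a one-line citation. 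Two small inaccuracies worth fixing: the ``fourth'' matching $a_2=b_2$ is not equivalent to $a_1=b_1$ via the balance relation --- it forces $k_2=0$, which is excluded by hypothesis; and in ruling out condition (8) in your first alternative, what is forced is $m_1=(2n+1)|k_1|\geq|k_1|$ rather than $m_1=k_1$, though the contradiction $|k_1|\geq m_1+2\geq|k_1|+2$ goes through either way.
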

\begin{proof} First suppose $T_{e^{ik_1\theta }r^{m_1}}T_{e^{ik_2\theta }r^{m_2}}-T_{{e^{i(k_1+k_2)\theta}\psi}}$ has finite rank. Then by Corollary~\ref{cor relation} and Corollary~\ref{cor comr},
one of the following cases holds.

\emph{Case} 1. $k_1=k_2$ and $m_1=m_2$. Since $k_1k_2\neq 0$, we get that condition (7) of Theorem~\ref{thm semicom} is the only one which satisfies this case, and hence $|k_1|<m_1+2$. Notice that $\varphi(r)=r^{m_1}$, then we get
\begin{align*}
    \widehat{\psi}(z)&=\frac{z}{\left(z+m_1-|k_1|\right)\left(z+m_1+|k_1|\right)}=\frac{m_1+|k_1|}{2|k_1|}\frac{1}{z+m_1+|k_1|}-\frac{m_1-|k_1|}{2|k_1|}\frac{1}{z+m_1-|k_1|},
\end{align*}
which implies $\psi(r)=\frac{m_1+|k_1|}{2|k_1|}r^{m_1+|k_1|}-\frac{m_1-|k_1|}{2|k_1|}r^{m_1-|k_1|}.$

\emph{Case} 2. $k_1=m_1,\ k_2=m_2$. Notice that $m_1\geq-1$, $m_2\geq-1$ and $k_1k_2\neq 0$, then
\begin{equation}\label{keq}
    k_2>\max\{-2,-k_1-2\}\Longleftrightarrow k_1+k_2\neq-2.
\end{equation}
So by condition (a) of Corollary~\ref{cor semianaly}, we get $k_1+k_2\neq-2$ and $\psi(r)=r^{k_1+k_2}$.

\emph{Case} 3. $k_1=-m_1,\ k_2=-m_2$. Then $T_{e^{ik_1\theta }r^{m_1}}T_{e^{ik_2\theta }r^{m_2}}-T_{{e^{i(k_1+k_2)\theta}\psi}}$ can be written as
$T_{e^{-i(-k_1)\theta }r^{-k_1}}T_{e^{ik_2\theta }r^{-k_2}}-T_{{e^{i(k_1+k_2)\theta}\psi}}.$
Since $k_1\leq 1$, $k_2\leq 1$, we get $$k_2<\min\{2,-k_1+2\}\Longleftrightarrow k_1+k_2\neq 2.$$
So by condition (b) of Corollary~\ref{cor semianaly}, we get $k_1+k_2\neq 2$ and $\psi(r)=r^{-k_1-k_2}$.

By condition (7) of Theorem~\ref{thm semicom} and Corollary~\ref{cor semianaly}, the converse implication is clear.  This completes the proof.
\end{proof}

\begin{corollary}\label{cor semicom} Let $k_1,\;k_2\in\mathbb{Z}$ such that $k_1k_2\neq 0$, and let $m\in \mathbb{R}$, $m\geq -1$.
Then for a nonzero radial T-function $\varphi(r)$ on $D$, the semicommutator $\left(T_{e^{ik_1\theta} r^m},T_{e^{ik_2\theta }\varphi}\right]$ has finite rank on $L_h^2$ if and only if one of the following conditions holds:
\begin{enumerate}
\item[(1)]$m=k_1$, $k_2>\max\{-2,-k_1-2\}$ and $\varphi(r)=r^{k_2}$.
\item[(2)]$m=-k_1$, $k_2<\min\{2,-k_1+2\}$ and $\varphi(r)=r^{-k_2}$.
\end{enumerate}
\end{corollary}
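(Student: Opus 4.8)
The plan is to deduce this corollary from Theorem~\ref{thm semicom} together with Corollary~\ref{cor semianaly}, using the observation that the semicommutator in question is a particular generalized semicommutator. Writing $f_1=e^{ik_1\theta}r^m$ and $f_2=e^{ik_2\theta}\varphi$, one has $\left(T_{f_1},T_{f_2}\right]=T_{f_1}T_{f_2}-T_{f_1f_2}=T_{e^{ik_1\theta}r^m}T_{e^{ik_2\theta }\varphi}-T_{e^{i(k_1+k_2)\theta }\psi}$ with $\psi=r^m\varphi$, and the shift rule for the Mellin transform gives $\widehat{\psi}(z)=\widehat{\varphi}(z+m)$. Hence, by Theorem~\ref{thm semicom}, the semicommutator has finite rank if and only if one of the conditions $(1)$--$(8)$ there holds with $\psi=r^m\varphi$. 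Since $k_1k_2\neq 0$, conditions $(1)$--$(3)$ are impossible; and, exactly as in the proof of Theorem~\ref{thm semicom} (via Theorem~\ref{thm eqvi} and adjoints), I may assume $k_1>0$ and transfer the answer back at the end.

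Next I would run through the remaining conditions $(4)$--$(8)$ and impose the extra requirement $\psi=r^m\varphi$. Conditions $(4)$ and $(5)$ are elementary: in $(4)$ one needs $r^m\varphi\equiv C$ while $\varphi=C\left(\frac{m+1}{2}r^{-1}-\frac{m-1}{2}r\right)$, and differentiating $\frac{m+1}{2}r^{m-1}-\frac{m-1}{2}r^{m+1}$ shows this forces $(m-1)(m+1)=0$; in $(5)$ one needs $r^{m+|k_2|}\equiv r^{|k_2|-1}$, hence $m=-1$, which with $m+|k_1|=0$ gives $|k_1|=1$. In each surviving subcase $|k_1|=1$ and $m=\pm k_1$, and $\varphi$ becomes a constant multiple of $r^{k_2}$ or of $r^{-k_2}$. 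For conditions $(6)$--$(8)$ I would substitute the explicit quotient-of-Gammas formulas for $\widehat{\varphi}$ and $\widehat{\psi}$ into $\widehat{\varphi}(z+m)=\widehat{\psi}(z)$ and simplify; after cancelling the common factor this becomes an identity of the form $\prod_{i=1}^{4}\Gamma\left(\frac{z}{2k_1}+\alpha_i\right)=\prod_{j=1}^{4}\Gamma\left(\frac{z}{2k_1}+\beta_j\right)$ with $\sum\alpha_i=\sum\beta_j$, and comparing the poles of the two sides forces the multisets $\left\{\alpha_i\right\}$ and $\left\{\beta_j\right\}$ to coincide (only the trivial cancellation is possible). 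A short case analysis of the resulting linear equations --- using $m\geq-1$, $k_1\geq1$, and the sign and size restrictions on $k_2$ built into $(6)$, $(7)$, $(8)$ respectively --- leaves only the solution $m=k_1$; for that value the Gamma quotient defining $\widehat{\varphi}$ telescopes via $\Gamma(x+1)=x\Gamma(x)$ to $\widehat{\varphi}(z)=\frac{C}{z+|k_2|}$, i.e. $\varphi=Cr^{k_2}$ (one has $k_2>0$ in $(7)$, $(8)$ and $k_2=-1$ in $(6)$, so $\varphi=Cr^{k_2}$ throughout).

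Collecting the outcomes, the finite-rank cases are exactly $m=k_1$ with $\varphi=Cr^{k_2}$, or $m=-k_1$ with $\varphi=Cr^{-k_2}$; undoing the reduction $k_1>0$ (by replacing $(k_1,k_2,\varphi)$ with $(-k_1,-k_2,\overline{\varphi})$, which preserves finite rank by Theorem~\ref{thm eqvi} and adjoints) reproduces the same two families for arbitrary $k_1$. In either family the semicommutator coincides with the generalized semicommutator of Corollary~\ref{cor semianaly}(a) (resp. (b)) with $\varphi=Cr^{k_2}$ (resp. $Cr^{-k_2}$) and $\psi=r^m\varphi$; the standing hypothesis there ($k_1>0$ or $k_1=-1$, resp. $k_1<0$ or $k_1=1$) holds automatically because $m\geq-1$ and $m=\pm k_1$ force $k_1\in\{-1\}\cup\mathbb{Z}_{>0}$ (resp. $k_1\in\{1\}\cup\mathbb{Z}_{<0}$). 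That corollary then fixes the admissible range of $k_2$ as $k_2>\max\{-2,-k_1-2\}$ in case $(1)$ and $k_2<\min\{2,-k_1+2\}$ in case $(2)$, and simultaneously supplies the converse implication. The main obstacle is the Gamma-function bookkeeping in cases $(6)$--$(8)$: one must argue carefully that an equality between quotients of four Gammas with equal numerator and denominator counts admits only the trivial pole cancellation, and then carry out the ensuing case analysis without slips; everything else is routine.
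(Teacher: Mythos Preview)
Your approach is correct but takes a materially different route from the paper. You reduce to Theorem~\ref{thm semicom}, then impose the extra constraint $\psi=r^m\varphi$ case by case across conditions $(4)$--$(8)$, with the heaviest step being the pole-matching argument for the Gamma quotients in $(6)$--$(8)$. The paper instead bypasses Theorem~\ref{thm semicom} almost entirely: starting only from \eqref{pmax1} with $\psi=r^m\varphi$, it derives the single functional identity
\[
(z-k_1+k_2)\,\widehat{\varphi}(z-k_1)=(z+m+k_2)\,\widehat{\varphi}(z+m),
\]
and then invokes the argument of \cite[Theorem~6]{CR} to conclude directly that either $\widehat{\varphi}(z)=\frac{C}{z+k_2}$ or $m+k_1=0$. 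Each alternative is then handled by one appeal to Corollary~\ref{cor semimono} (for $\varphi=Cr^{k_2}$) or Corollary~\ref{cor semianaly} (for $m+k_1=0$). So the paper trades your five-way case split and Gamma bookkeeping for a two-line reduction to a known Mellin-transform dichotomy; your method is more self-contained (it does not need the \cite{CR} argument or Corollary~\ref{cor semimono}) but substantially longer and, as you note, vulnerable to slips in the multiset analysis of the $\alpha_i$ and $\beta_j$.
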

\begin{proof}
If the semicommutator $\left(T_{e^{ik_1\theta} r^m},T_{e^{ik_2\theta }\varphi}\right]$ has finite rank, then it follows from \eqref{pmax1} that $$(z-k_1+k_2)\widehat{r^m}(z+k_2)\widehat{\varphi}(z-k_1)=\widehat{r^m\varphi}(z),$$ which is equivalent to
$(z-k_1+k_2)\widehat{\varphi}(z-k_1)=(z+m+k_2)\widehat{\varphi}(z+m).$
Using the same argument as in the proof of \cite[Theorem 6]{CR}, we can conclude that
\begin{equation}\label{semiva}
\widehat{\varphi}(z-k_1)=\frac{C}{z-k_1+k_2}
\end{equation}
or $m+k_1=0$ holds.

If \eqref{semiva} holds, then $\varphi(r)=Cr^{k_2}$. Notice that $\varphi(r)$ is a T-function, and hence $k_2\geq-1$. Then by Corollary~\ref{cor semimono} we get $m=k_1$ and $k_1+k_2\neq-2.$
Because of \eqref{keq}, we have that condition (1) holds.

If $m+k_1=0$, then $\left(T_{e^{ik_1\theta} r^m},T_{e^{ik_2\theta }\varphi}\right]$
can be written as $T_{e^{-i(-k_1)\theta} r^{-k_1}}T_{e^{ik_2\theta }\varphi}-T_{e^{i(k_1+k_2)\theta }r^{-k_1}\varphi}$
Notice that $-k_1=m\geq-1$, and $k_1k_2\neq 0$,
then Corollary~\ref{cor semianaly} shows that condition (2) holds.

By Corollary~\ref{cor semianaly}, the converse implication is clear. This completes the proof.
\end{proof}

\section{Ranks of commutators and semicommutators on $L_a^2$}

On the Bergman space, $\check{\textrm{C}}\textrm{u}\check{\textrm{c}}\textrm{kovi}\acute{\textrm{c}}$ and Louhichi \cite{CuL} proved that
if two quasihomogeneous Toeplitz operators have both positive or both negative
degrees and if their commutator (or semicommutator) has finite rank, then the commutator (or semicommutator) must be zero.
Moreover, they presented two examples of nonzero finite rank commutators and semicommutators of two quasihomogeneous Toeplitz operators of opposite degrees:
$$rank\left(\left[T_{e^{2i\theta} r^6},T_{e^{-i\theta }(\frac{3}{r}-r^3)}\right]\right)=1\ \ \textrm{and}\ \
rank\left(\left(T_{e^{i\theta}\frac{1}{r}},T_{e^{-i\theta }r}\right]\right)=1.$$
In this section, we continue this line of investigation and study finite rank commutators and generalized semicommutators of
quasihomogeneous Toeplitz operators on the Bergman space, with one of the symbols being of the form $e^{ik\theta} r^m$.
We will show that the corresponding results are different from those of harmonic
Bergman space.

The following lemma is from \cite[Lemma 5.3]{LSZ} which we shall use often in this section.
\begin{lemma}\label{lem Bcalc} Let $k\in\mathbb{Z}$ and let $\varphi$ be a
radial T-function. Then on $L_a^2$, for each $n\in \mathbb{N}$ we have
$$
T_{ e^{ik\theta}\varphi }(z^{n})=\left\{ {\begin{array}{ll}
   {2(n+k+1)\widehat{\varphi }(2n+k+2)z^{n+k}}, & \text{if\; $n\geq -k$},  \\
   {0},
   & \text{if\; $n<-k$.}  \\
\end{array}} \right.\\
$$
\end{lemma}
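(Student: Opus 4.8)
The plan is to compute $T_{e^{ik\theta}\varphi}(z^n)=P\!\left(e^{ik\theta}\varphi(r)\,z^n\right)$ directly from the definition, exploiting the fact that $\{z^j\}_{j\geq 0}$ is an orthogonal basis of $L^2_a$ with $\|z^j\|^2=(j+1)^{-1}$ relative to the normalized area measure. First I would write $z^n=r^ne^{in\theta}$, so that the function to be projected is $g(re^{i\theta})=\varphi(r)\,r^n\,e^{i(n+k)\theta}$; note $g\in L^1(D,dA)$ because $\varphi$ is a T-function, i.e. $\varphi\in L^1([0,1],r\,dr)$, so this lies in the natural domain of $P$. Since the Bergman projection acts as $Pg=\sum_{j\geq 0}(j+1)\langle g,z^j\rangle z^j$, everything reduces to computing the coefficients $\langle g,z^j\rangle$.

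Next I would pass to polar coordinates and separate the angular and radial integrals. The angular factor $\int_0^{2\pi}e^{i(n+k-j)\theta}\,d\theta$ vanishes unless $j=n+k$, so $g$ has at most one nonzero Fourier--Bergman coefficient, the one with index $n+k$. This index is admissible (i.e. $z^{n+k}\in L^2_a$) precisely when $n+k\geq 0$, that is $n\geq -k$. In that regime the surviving radial integral is $\langle g,z^{n+k}\rangle=2\int_0^1\varphi(r)\,r^{2n+k+1}\,dr=2\widehat{\varphi}(2n+k+2)$ by the definition of the Mellin transform, and multiplying by $(n+k+1)$ gives $T_{e^{ik\theta}\varphi}(z^n)=2(n+k+1)\widehat{\varphi}(2n+k+2)\,z^{n+k}$. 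When $n<-k$ there is no admissible index, every coefficient vanishes, and hence $T_{e^{ik\theta}\varphi}(z^n)=0$.

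There is no real obstacle here: the only items to verify carefully are the integrability of $g$ (immediate from the T-function hypothesis) and the bookkeeping of the normalization constants in the polar integral, both routine. Conceptually this is just the Bergman-space analogue of Lemma~\ref{lem calc}; the distinction is that $L^2_a$ contains no anti-analytic monomials to absorb the case $n<-k$, which is exactly why the second branch here is $0$ rather than a nonzero multiple of $\overline{z}^{\,-n-k}$, and this discrepancy is what ultimately drives the differences between the Bergman-space and harmonic Bergman-space results established later.
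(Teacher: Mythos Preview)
Your direct computation is correct and is precisely the standard argument: expand in the orthogonal basis $\{z^j\}_{j\ge 0}$, use the angular integral to isolate the single coefficient $j=n+k$, and evaluate the radial integral as a Mellin transform. The only remark is that the paper does not actually give its own proof of this lemma---it simply cites \cite[Lemma 5.3]{LSZ}---so your argument supplies what the paper omits, and it is essentially the same computation one finds in that reference.
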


Unlike harmonic Bergman space case, the following theorem shows that the commutator $\left[T_{e^{ik_1\theta} r^m},T_{e^{ik_2\theta }\varphi}\right]$ on $L_a^2$ can only has finite
rank 0 or 1.
\begin{theorem}\label{thm Bcommute} Let $k_1,\;k_2\in\mathbb{Z}$, and let $m\in \mathbb{R}$, $m\geq -1$.
Then for a nonzero radial T-function $\varphi(r)$ on $D$, the commutator $\left[T_{e^{ik_1\theta} r^m},T_{e^{ik_2\theta }\varphi}\right]$ has finite rank on $L_a^2$ if and only if one of the following conditions holds:
\begin{enumerate}
\item[(1)]$k_1=m=0$.
\item[(2)]$k_2=0$ and $\varphi=C$.
\item[(3)]$k_1=k_2=0$.
\item[(4)]$k_1k_2>0$, $|k_2|<m+|k_1|+2$ and $\widehat{\varphi}(z)=
C\frac{\Gamma\left(\frac{z+|k_2|}{2|k_1|}\right)\Gamma\left(\frac{z+m+|k_1|-|k_2|}{2|k_1|}\right)}{\Gamma\left(\frac{z+2|k_1|-|k_2|}{2|k_1|}\right)
\Gamma\left(\frac{z+m+|k_1|+|k_2|}{2|k_1|}\right)}.$
\item[(5)]$k_1k_2>0$, $|k_2|\geq m+|k_1|+2$, $m=(2n+1)|k_1|$ for some $n\in\mathbb{N}$ and
$$\widehat{\varphi}(z)=C\frac{\left(\frac{z-|k_2|}{2|k_1|}+1\right)\left(\frac{z-|k_2|}{2|k_1|}+2\right)\cdots\left(\frac{z-|k_2|}{2|k_1|}+n\right)}
{\frac{z+|k_2|}{2|k_1|}\left(\frac{z+|k_2|}{2|k_1|}+1\right)\cdots\left(\frac{z+|k_2|}{2|k_1|}+n\right)}.$$
\item[(6)]$k_1k_2<0$, $|k_2|\geq 2$, $m+|k_1|=0$ and ${\varphi}(r)=Cr^{|k_2|}$.
\item[(7)]$k_1k_2<0$, $|k_2|=1$ and $\widehat{\varphi}(z)=
C\frac{\Gamma\left(\frac{z-1}{2|k_1|}\right)\Gamma\left(\frac{z+m+|k_1|+1}{2|k_1|}\right)}{\Gamma\left(\frac{z+2|k_1|+1}{2|k_1|}\right)
\Gamma\left(\frac{z+m+|k_1|-1}{2|k_1|}\right)}.$
\end{enumerate}
In each condition $(1)$-$(5)$, $rank\left(\left[T_{e^{ik_1\theta} r^m},T_{e^{ik_2\theta }\varphi}\right]\right)=0$. In each condition $(6)$-$(7)$,
$$Ran\left(\left[T_{e^{ik_1\theta} r^m},T_{e^{ik_2\theta }\varphi}\right]\right)=\emph{span}\left\{z^{\max\left\{k_1,k_2\right\}-1}\right\}$$
and $$\left[T_{e^{ik_1\theta} r^m},T_{e^{ik_2\theta }\varphi}\right]=C z^{\max\left\{k_1,k_2\right\}-1} \otimes z^{\max\left\{-k_1,-k_2\right\}-1}.$$ Therefore, $rank\left(\left[T_{e^{ik_1\theta} r^m},T_{e^{ik_2\theta }\varphi}\right]\right)=1.$

\end{theorem}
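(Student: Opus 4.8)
The plan is to run the same argument as for Theorem~\ref{thm commute}, with Lemma~\ref{lem Bcalc} in place of Lemma~\ref{lem calc}; the whole point is that on $L_a^2$ only the monomials $\{z^n\}_{n\ge 0}$ are available, so the commutator is one-sided and the set of $z^n$ on which it can fail to vanish is tiny, which is exactly why the rank drops to $0$ or $1$.

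First I would use Lemma~\ref{lem Bcalc} to write $[T_{e^{ik_1\theta}r^m},T_{e^{ik_2\theta}\varphi}](z^n)$ explicitly; it is always a scalar multiple of $z^{n+k_1+k_2}$, and once $n$ is large enough that neither composition is cut off, that scalar equals, up to the nonzero factor $2(n+k_1+k_2+1)$, the bracket in \eqref{cl1} with $\varphi_1=r^m$ and $\varphi_2=\varphi$. Since the commutator maps each $z^n$ into the one-dimensional space $\mathbb{C}z^{n+k_1+k_2}$, finite rank forces it to annihilate $z^n$ for all large $n$, hence \eqref{cmax} (with $\varphi_1=r^m$) holds for all large $n$; the Mellin convolution theorem and the uniqueness principle for Mellin transforms used in the proof of Theorem~\ref{thm commutemax} (see also \cite[Theorem~4]{CR}) then propagate this identity to all admissible arguments. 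When $k_1k_2\ne 0$ I would take $k_1>0$ without loss of generality (otherwise pass to adjoints, using $[T_{e^{ik_1\theta}r^m},T_{e^{ik_2\theta}\varphi}]^{\ast}=-[T_{e^{-ik_1\theta}r^m},T_{e^{-ik_2\theta}\overline{\varphi}}]$ and $\overline{r^m}=r^m$) and deduce \eqref{Cgk}; Lemma~\ref{lem vak} then pins down exactly which $(k_1,k_2,m)$ make this $\varphi$ a T-function, and sorting those by the sign of $k_1k_2$ yields conditions (4)--(7), the instances with $|k_1|=|k_2|$ collapsing into (4) or (7) after the $\Gamma$-quotient is simplified. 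The degenerate cases $k_1k_2=0$ reduce to commutators with the identity or with a diagonal Toeplitz operator and give conditions (1)--(3). For the converse, in each listed case one substitutes the now-known $\widehat{\varphi}$ back into Lemma~\ref{lem Bcalc} and checks directly that the commutator kills $z^n$ outside a short initial block.

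The step that genuinely differs from the harmonic case, and which I would treat most carefully, is the rank count in conditions (6)--(7). There $k_1>0>k_2$ and $T_{e^{ik_2\theta}\varphi}(z^n)=0$ exactly for $0\le n<|k_2|$. Splitting $n$ into the ranges $n\ge|k_2|$, $\max\{0,|k_2|-k_1\}\le n<|k_2|$, and $n<|k_2|-k_1$, one sees that the commutator vanishes on the first range by \eqref{cmax} and on the third because both compositions are already zero, whereas on the middle range only $T_{e^{ik_2\theta}\varphi}T_{e^{ik_1\theta}r^m}$ survives and produces a multiple of $z^{n+k_1+k_2}$ whose coefficient carries the factor $2(n+k_1+k_2+1)\,\widehat{\varphi}(2n+2k_1+k_2+2)$. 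In (6) the hypothesis $m+|k_1|=0$ forces $|k_1|=1$ because $m\ge -1$, so the middle range is the single value $n=|k_2|-1$; in (7) the hypothesis $|k_2|=1$ makes the middle range the single value $n=0=|k_2|-1$. Hence in both cases the commutator is supported on $z^{|k_2|-1}$, i.e.\ it vanishes on every $z^n$ with $n\ne\max\{-k_1,-k_2\}-1$, and evaluating the surviving coefficient on the explicit $\widehat{\varphi}$ shows it is nonzero; thus the range is $\mathbb{C}z^{k_1-1}=\mathbb{C}z^{\max\{k_1,k_2\}-1}$ and Proposition~\ref{prop rankone} identifies the operator with $C\,z^{\max\{k_1,k_2\}-1}\otimes z^{\max\{-k_1,-k_2\}-1}$, which has rank one.

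The main obstacle is bookkeeping rather than depth: I must check that the functional equation \eqref{cmax} is genuinely available at all the exponents entering the middle and large ranges and not merely asymptotically, that none of the Mellin transforms occurring in the surviving coefficient has a pole at the point where it is evaluated (so that this coefficient is nonzero and the rank is $1$, not $0$), and that in the case $k_1k_2>0$ equation \eqref{cmax} forces the commutator to vanish on every $z^n$, not just the large ones, which is what collapses the rank to $0$ there.
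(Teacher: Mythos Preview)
Your proposal is correct and follows essentially the same route as the paper: reduce finite rank to the identity \eqref{cmax} via Lemma~\ref{lem Bcalc} and the Mellin-transform uniqueness argument, invoke the classification already carried out in the proof of Theorem~\ref{thm commute} (together with Lemma~\ref{lem vak}) to obtain conditions (1)--(7), and then handle the rank directly. Your range-splitting for conditions (6)--(7) into $n\ge|k_2|$, $\max\{0,|k_2|-k_1\}\le n<|k_2|$, and $n<|k_2|-k_1$ is exactly the computation the paper performs, and your observations that $m+|k_1|=0$ forces $|k_1|=1$ in (6) and that $|k_2|=1$ in (7) each collapse the middle range to a single index are precisely the mechanism by which the paper gets rank one; the paper simply evaluates the commutator on $z^l$ for $0\le l<N_1$ case by case rather than phrasing it as a three-range split, but the content is identical.
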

\begin{proof} Proceeding as in the proof of \cite[Theorem 6]{CuL}, we can get that the commutator $\left[T_{e^{ik_1\theta} r^m},T_{e^{ik_2\theta }\varphi}\right]$ on $L_a^2$ has finite rank if and only if
$\left[T_{e^{ik_1\theta} r^m},T_{e^{ik_2\theta }\varphi}\right](z^n)=0$ holds for any natural number $n\geq N_1$,
or equivalently,
\begin{align}\label{brm}
&2(n+k_2+1)\widehat{r^m}(2n+k_1+2k_2+2)\widehat{\varphi}(2n+k_2+2)\nonumber\\
&=2(n+k_1+1)\widehat{r^m}(2n+k_1+2)\widehat{\varphi}(2n+2k_1+k_2+2).
\end{align} Notice that \eqref{brm} is the same as \eqref{rm}, so from the proof of Theorem~\ref{thm commute} we know that \eqref{brm} holds if and only if one of the conditions (1)-(7) holds, as desired.

If one of the conditions (1)-(5) holds, then it is known that $rank\left(\left[T_{e^{ik_1\theta} r^m},T_{e^{ik_2\theta }\varphi}\right]\right)=0.$
Now, we suppose condition (6) holds. Without loss of generality, we can assume that $k_1>0$, then $k_1=1$ and $N_1=-k_2.$
Moreover, for any $0\leq l<-k_2$, by Lemma~\ref{lem Bcalc} we get
\begin{align*}
\left[T_{e^{i\theta} r^{-1}},T_{e^{ik_2\theta}\varphi}\right]({z}^{l})
&=\left\{
{\begin{array}{ll}
   {0}, & \text{if\; $0\leq l<-k_2-1$},  \\
   {-{2(-k_2+1)}\widehat{r^{-1}}(-2k_2+1)2\widehat{\varphi }(-k_2+2)},
   & \text{if\; $l=-k_2-1$,}
\end{array}} \right.\\
&=\left\{
{\begin{array}{ll}
   {0}, & \text{if\; $0\leq l<-k_2-1$},  \\
   {\frac{1}{k_2}},
   & \text{if\; $l=-k_2-1$.}
\end{array}} \right.
\end{align*}
Thus,
$Ran\left(\left[T_{e^{i\theta} r^{-1}},T_{e^{ik_2\theta}\varphi}\right]\right)=\emph{span}\left\{1\right\} $ and$\left[T_{e^{ik_1\theta} r^m},T_{e^{ik_2\theta }\varphi}\right]=-\left(1 \otimes z^{-k_2-1}\right).$
Next, we suppose condition (7) holds.  We can also assume that $k_1>0$, then $k_2=-1$ and $N_1=1.$
Moreover, by Lemma~\ref{lem Bcalc} we get
$$
\left[T_{e^{ik_1\theta} r^{m}},T_{e^{-i\theta }\varphi}\right]({z}^{0})=-4k_1(k_1+1)\widehat{r^m}(k_1+2)\widehat{\varphi }(2k_1+1)z^{k_1-1}.
$$
Thus,
$Ran\left(\left[T_{e^{i\theta} r^{-1}},T_{e^{ik_2\theta}\varphi}\right]\right)=\emph{span}\left\{z^{k_1-1}\right\}$
and
$$\left[T_{e^{ik_1\theta} r^{m}},T_{e^{-i\theta }\varphi}\right]=-4k_1(k_1+1)\widehat{r^m}(k_1+2)\widehat{\varphi }(2k_1+1) z^{k_1-1}\otimes 1.$$
This completes the proof.
\end{proof}

We now observe several consequences of Theorem~\ref{thm Bcommute}. First, we give two examples of the commutators on $L_a^2$ with nonzero finite rank, corresponding to the conditions (6) and (7) of Theorem~\ref{thm Bcommute}.

\begin{example} Let $\varphi(r)$ be a nonzero radial T-function on $D$, then on $L_a^2$, the following statements hold.
\begin{enumerate}
\item[(1)] The commutator $\left[T_{e^{2i\theta} r^{6}},T_{e^{-i\theta}\varphi(r)}\right]$ has finite rank if and only if $\varphi(r)=C\left(\frac{3}{r}-r^3\right)$.\\ In this case, $\left[T_{e^{2i\theta} r^{6}},T_{e^{-i\theta}\varphi(r)}\right]=-\frac{3}{2}C\left(z\otimes1\right)$.
\item[(2)] The commutator $\left[T_{e^{i\theta} r^{-1}},T_{e^{-3i\theta}\varphi(r)}\right]$ has finite rank if and only if $\varphi(r)=Cr^3$.\\ In this case,
$\left[T_{e^{i\theta} r^{-1}},T_{e^{-3i\theta}\varphi(r)}\right]=-C\left(1 \otimes z^{2}\right)$.
\end{enumerate}
\end{example}

\begin{corollary}  Let $k_1,\;k_2\in\mathbb{Z}$ such that $k_1>0$ or $k_1=-1$,
and let $\varphi(r)$ be a nonzero radial T-function on $D$. Then on $L_a^2$, the following statements hold.
\begin{enumerate}
\item[(a)] The commutator $\left[T_{e^{ik_1\theta }r^{k_1}},T_{e^{ik_2\theta }\varphi}\right]$ has finite rank if and only if
$$ k_2>-2\ \ \textrm{and}\ \  \varphi(r)=Cr^{k_2}.$$
\item[(b)] The commutator $\left[T_{e^{-ik_1\theta }r^{k_1}},T_{e^{ik_2\theta }\varphi}\right]$ has finite rank if and only if
$$ k_2<2\ \ \textrm{and}\ \  \varphi(r)=Cr^{-k_2}.$$
\end{enumerate}
\end{corollary}
\begin{proof} If $k_1>0$, then by Theorem~\ref{thm Bcommute}, the commutator $\left[T_{e^{ik_1\theta }r^{k_1}},T_{e^{ik_2\theta }\varphi}\right]$ has finite rank if and only if $k_2$ satisfies one of the conditions (2), (4), (5) and (7) and $$\widehat{\varphi}(z)=
\frac{C}{ z+k_2}.$$ In other words, $k_2>-2$ and $\varphi(r)=Cr^{k_2}$.
Similarly, if $k_1=-1$, then one of the conditions (2) and (4), (6) and (7) holds, and hence $k_2>-2$ and $\varphi(r)=Cr^{k_2}$.

Combining condition (a) with the use of
adjoint operators, one can get condition (b) holds.
\end{proof}

\begin{corollary}  Let $k_1,\;k_2\in\mathbb{Z}$ such that $k_1k_2\neq 0$, and let $m_1,\;m_2\in \mathbb{R}$ such that greater than or equal to $-1$.
Then the commutator $\left[T_{e^{ik_1\theta }r^{m_1}},T_{e^{ik_2\theta }r^{m_2}}\right]$ has finite rank on $L_a^2$ if and only if
one of the following conditions holds:
\begin{enumerate}
\item[(1)]$k_1=k_2$ and $m_1=m_2$.
\item[(2)]$k_1=m_1$ and $k_2=m_2$.
\item[(3)]$k_1=-m_1$ and $k_2=-m_2$.
\end{enumerate}
\end{corollary}
\begin{proof} The proof is similar to Corollary~\ref{cor comr}.
\end{proof}

Next, we will discuss the finite rank generalized semicommutators of quasihomogeneous Toeplitz
operators on $L_a^2$. First, we give the following result.

\begin{proposition}\label{prop Bpva} Let $k_1,\;k_2\in\mathbb{Z}$, and let $m\in \mathbb{R}$, $m\geq -1$.
Then for radial T-functions $\varphi$ and $\psi$ on $D$, the generalized semicommutator $T_{e^{ik_1\theta} r^m}T_{e^{ik_2\theta }\varphi}-T_{e^{i(k_1+k_2)\theta }\psi}$ has finite rank on $L_a^2$ if and only if
\begin{equation}\label{psirm}
\psi(r)=\frac{\varphi(r)}{r^{k_1}}-(m+k_1)r^{m+k_2}\int^1_{r}{\frac{\varphi(t)}{t^{m+k_1+k_2+1}}}{dt}
\end{equation}
\end{proposition}
\begin{proof} Using the same argument as in the proof of Proposition~\ref{prop commutemax}, we see that $T_{e^{ik_1\theta} r^m}T_{e^{ik_2\theta }\varphi}-T_{e^{i(k_1+k_2)\theta }\psi}$ on $L_a^2$ has finite rank if and only if
$$
2(n+k_2+1)\widehat{r^m}(2n+k_1+2k_2+2)\widehat{\varphi}(2n+k_2+2)=\widehat{\psi }(2n+k_1+k_2+2)
$$
holds for any natural number $n\geq N_2$, and hence
$${\left(r^{m+k_1+2k_2+2N_2}\right) }\ast_{M}{\left(r^{k_2+2N_2}\varphi\right) }
={\left(r^{2k_2+2N_2}\right)}\ast_{M}{\left(r^{k_1+k_2+2N_2}\psi\right) },
$$
which is equivalent to $$r^{m+k_1}\int^1_{r}{\frac{\varphi(t)}{t^{m+k_1+k_2+1}}}{dt}=\int^1_{r}{\frac{\psi(t)}{t^{k_2-k_1+1}}}{dt}.$$
By differentiating both sides, we get \eqref{psirm} holds.
\end{proof}

\begin{remark} Let $k_1,\;k_2,\;m$ and $\varphi$ be as in Proposition~\ref{prop Bpva}, and let $\psi$ be defined by \eqref{psirm}. If $\frac{\varphi(r)}{r^{k_1}}$is a T-function and either $m+k_2\geq-1$ or $m+k_1=0$ hold, then $\psi$ is a T-function, and hence $T_{e^{ik_1\theta} r^m}T_{e^{ik_2\theta }\varphi}-T_{e^{i(k_1+k_2)\theta }\psi}$ has finite rank.
In fact, since $\frac{\varphi(r)}{r^{k_1}}$ is a T-function, the desired result is obvious when $m+k_1=0$. Now we suppose $m+k_2\geq-1$.
Then we have
\begin{align*}
\int_0^1\left|r^{m+k_2}\int^1_{r}{\frac{\varphi(t)}{t^{m+k_1+k_2+1}}}{dt}\right|rdr&\leq \int_0^1{r^{m+k_2+1}dr\int^1_{r}{\frac{\left|\varphi(t)\right|}{t^{m+k_1+k_2+1}}}{dt}}\\
&=\int_0^1\frac{\left|\varphi(t)\right|}{t^{m+k_1+k_2+1}}{dt}\int_0^tr^{m+k_2+1}dr\\
&=\frac{1}{m+k_2+2}\left\|\frac{\varphi(r)}{r^{k_1}}\right\|_{L^{1}}<\infty.
\end{align*}
Moreover,
$$\left|r^{m+k_2}\int^1_{r}{\frac{\varphi(t)}{t^{m+k_1+k_2+1}}}{dt}\right|
\leq\int_r^1{{\left|\frac{\varphi(t)}{t^{k_1}}\right|}}\frac{dt}{t}\leq\frac{1}{r^2}\left\|\frac{\varphi(r)}{r^{k_1}}\right\|_{L^{1}}.
$$
Therefore, the radial function $r^{m+k_2}\int^1_{r}{\frac{\varphi(t)}{t^{m+k_1+k_2+1}}}{dt}$ is "nearly bounded" on $D$, and hence $\psi$ is a T-function.
\end{remark}

The following theorem completely characterizes the finite rank generalized semicommutators of two Toeplitz
operators on $L^2_a$ with monomial symbols.

\begin{theorem} Let $k_1,\;k_2\in\mathbb{Z}$, and let $m_1,\;m_2\geq -1$.
Then for a radial T-function $\psi$ on $D$, the generalized semicommutator $T_{e^{ik_1\theta} r^{m_1}}T_{e^{ik_2\theta }r^{m_2}}-T_{e^{i(k_1+k_2)\theta }\psi}$ has finite rank on $L^2_a$ if and only if
\begin{equation}\label{Bprl}
\psi(r)=\left\{
{\begin{array}{ll}
   {\frac{k_1+m_1}{m_1+k_2-m_2+k_1}r^{m_1+k_2}-\frac{m_2-k_2}{m_1+k_2-m_2+k_1}r^{m_2-k_1}}, & \text{if\; $m_1+k_2\neq m_2-k_1$},  \\
   {r^{m_1+k_2}\left[1+\left(m_1+k_1\right)\log r\right]},
   & \text{if\; $m_1+k_2=m_2-k_1$,}
\end{array}} \right.
\end{equation}
and one of the following conditions holds:
\begin{enumerate}
\item[(1)]$k_1+m_1\neq 0$, $k_2-m_2\neq 0$, $m_1+k_2\geq -1$ and $m_2-k_1\geq -1$.
\item[(2)]$k_1+m_1=0$ and $m_2-k_1\geq -1$.
\item[(3)]$k_2-m_2=0$ and $m_1+k_2\geq -1$.
\end{enumerate}
Furthermore, $T_{e^{ik_1\theta} r^{m_1}}T_{e^{ik_2\theta }r^{m_2}}-T_{e^{i(k_1+k_2)\theta }\psi}$ on $L^2_a$ has nonzero finite rank if and only if \eqref{Bprl} holds and one of the following conditions holds:
\begin{itemize}
  \item $k_1+k_2\geq0$, $k_2\leq-2$ and condition $(1)$ holds. In this case,
$$Ran\left(T_{e^{ik_1\theta} r^{m_1}}T_{e^{ik_2\theta }r^{m_2}}-T_{e^{i(k_1+k_2)\theta }\psi}\right)=\emph{span}\left\{z^{k_1+k_2},\cdots, z^{k_1-2}\right\}$$
and $$T_{e^{ik_1\theta} r^{m_1}}T_{e^{ik_2\theta }r^{m_2}}-T_{e^{i(k_1+k_2)\theta }\psi}=\sum\limits_{j = 0}^{-k_2-2} {C_{j}\left( z^{j+k_1+k_2} \otimes z^j \right)}$$
for nonzero constant $C_{j}$. Therefore,
$rank\left(T_{e^{ik_1\theta} r^{m_1}}T_{e^{ik_2\theta }r^{m_2}}-T_{e^{i(k_1+k_2)\theta }\psi}\right)=-k_2-1.$
  \item $k_1+k_2\geq0$, $k_2=-1$ and one of the conditions $(2)$-$(3)$ holds. In this case,
$$Ran\left(T_{e^{ik_1\theta} r^{m_1}}T_{e^{ik_2\theta }r^{m_2}}-T_{e^{i(k_1+k_2)\theta }\psi}\right)=\emph{span}\left\{z^{k_1-1}\right\}$$
and $$T_{e^{ik_1\theta} r^{m_1}}T_{e^{ik_2\theta }r^{m_2}}-T_{e^{i(k_1+k_2)\theta }\psi}={C\left( z^{k_1-1} \otimes 1 \right)}$$
for nonzero constant $C$. Therefore,
$rank\left(T_{e^{ik_1\theta} r^{m_1}}T_{e^{ik_2\theta }r^{m_2}}-T_{e^{i(k_1+k_2)\theta }\psi}\right)=1.$
  \item $k_1+k_2<0$, $k_1\geq 2$ and condition $(1)$ holds. In this case,
$$Ran\left(T_{e^{ik_1\theta} r^{m_1}}T_{e^{ik_2\theta }r^{m_2}}-T_{e^{i(k_1+k_2)\theta }\psi}\right)=\emph{span}\left\{1,\cdots, z^{k_1-2}\right\}$$
and $$T_{e^{ik_1\theta} r^{m_1}}T_{e^{ik_2\theta }r^{m_2}}-T_{e^{i(k_1+k_2)\theta }\psi}=\sum\limits_{j = 0}^{k_1-2} {C_{j}\left( z^{j} \otimes z^{j-k_1-k_2} \right)}$$
for nonzero constant $C_{j}$. Therefore,
$rank\left(T_{e^{ik_1\theta} r^{m_1}}T_{e^{ik_2\theta }r^{m_2}}-T_{e^{i(k_1+k_2)\theta }\psi}\right)=k_1-1.$
  \item $k_1+k_2<0$, $k_1=1$ and condition $(2)$ holds. In this case,
$$Ran\left(T_{e^{ik_1\theta} r^{m_1}}T_{e^{ik_2\theta }r^{m_2}}-T_{e^{i(k_1+k_2)\theta }\psi}\right)=\emph{span}\left\{1\right\}$$
and $$T_{e^{ik_1\theta} r^{m_1}}T_{e^{ik_2\theta }r^{m_2}}-T_{e^{i(k_1+k_2)\theta }\psi}= {C\left( 1 \otimes z^{-k_1-k_2} \right)}$$
for nonzero constant $C$. Therefore,
$rank\left(T_{e^{ik_1\theta} r^{m_1}}T_{e^{ik_2\theta }r^{m_2}}-T_{e^{i(k_1+k_2)\theta }\psi}\right)=k_1.$
\end{itemize}
\end{theorem}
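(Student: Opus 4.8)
The plan is to deduce this theorem from Theorem~\ref{thm Bpva} and then to read off the rank and the canonical form directly from Lemma~\ref{lem Bcalc}. Applying Theorem~\ref{thm Bpva} with $\varphi=r^{m_2}$ and $m=m_1$, the operator $T_{e^{ik_1\theta}r^{m_1}}T_{e^{ik_2\theta}r^{m_2}}-T_{e^{i(k_1+k_2)\theta}\psi}$ on $L^2_a$ has finite rank exactly when $\psi$ is the function in \eqref{psirm}, namely $\psi(r)=r^{m_2-k_1}-(m_1+k_1)r^{m_1+k_2}\int_r^1 t^{m_2-m_1-k_1-k_2-1}\,dt$. I would evaluate this elementary integral. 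If $m_1+k_2\neq m_2-k_1$ it equals $\bigl(1-r^{m_2-m_1-k_1-k_2}\bigr)/(m_2-m_1-k_1-k_2)$; using $r^{m_1+k_2}r^{m_2-m_1-k_1-k_2}=r^{m_2-k_1}$ and collecting the two powers gives the first branch of \eqref{Bprl}. If $m_1+k_2=m_2-k_1$ the integral is $-\log r$, and one gets $\psi(r)=r^{m_1+k_2}\bigl(1+(m_1+k_1)\log r\bigr)$, the second branch.

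Next I would pin down when this $\psi$ is a radial T-function (needed for the difference to be defined, hence for Theorem~\ref{thm Bpva} to apply). In the non-logarithmic branch $\psi$ is a combination of the two powers $r^{m_1+k_2}$ and $r^{m_2-k_1}$, whose coefficients vanish exactly when $m_1+k_1=0$ (leaving $\psi=r^{m_2-k_1}$) and when $m_2-k_2=0$ (leaving $\psi=r^{m_1+k_2}$), respectively; the logarithmic branch arises only when $m_1+k_2=m_2-k_1$ with $m_1+k_1\neq0$. Imposing the T-function criterion for power, and power-times-logarithm, radial functions on the surviving exponents exactly reproduces the trichotomy (1)-(3); this, combined with the previous paragraph, settles the ``finite rank $\iff$ \eqref{Bprl} and (1)-(3)'' part.

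For the rank I would work on the monomial basis $\{z^n\}_{n\geq0}$ of $L^2_a$ via Lemma~\ref{lem Bcalc}; note that on the Bergman space there is no conjugate part, so only $N_2$ (and never $N_3$) enters. Once $\psi$ is as in \eqref{Bprl} the finite-rank identity of Theorem~\ref{thm Bpva} holds, so the difference annihilates every $z^n$ with $n\geq N_2=\max\{0,-k_2,-k_1-k_2\}$ and its range is the span of the images of $z^0,\dots,z^{N_2-1}$. If $k_1\leq0$ one checks that $T_{e^{ik_2\theta}r^{m_2}}$ and $T_{e^{i(k_1+k_2)\theta}\psi}$ both annihilate every $z^n$ on this segment, so the operator is zero. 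If $k_1\geq1$ then $N_2=\max\{0,-k_2\}$ and $T_{e^{ik_2\theta}r^{m_2}}z^n=0$ there, so the difference acts on the segment as $-T_{e^{i(k_1+k_2)\theta}\psi}$; since $\widehat{\psi}(z)=\frac{z+k_2-k_1}{(z+m_1+k_2)(z+m_2-k_1)}$ (a double pole in the logarithmic case), the image of $z^n$ is a multiple of $z^{n+k_1+k_2}$ with coefficient $2(n+k_1+k_2+1)\widehat{\psi}(2n+k_1+k_2+2)$, which is $0$ for $n<-k_1-k_2$ and, among the remaining indices $n\leq-k_2-1$, vanishes only at $n=-k_2-1$ — unless that numerator zero is cancelled by a pole, which happens precisely under condition (2) (then $\widehat\psi=1/(z+m_2-k_1)$) or (3) (then $\widehat\psi=1/(z+m_1+k_2)$). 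A short case split on the sign of $k_1+k_2$, on $k_2\leq-2$ versus $k_2=-1$, and on $k_1\geq2$ versus $k_1=1$, then isolates the surviving index set in each scenario, yielding the four listed configurations together with their ranges, the canonical forms $\sum_j C_j\,(z^{a_j}\otimes z^{b_j})$ (read off directly, the difference being of diagonal shift type on $\{z^n\}$), and the ranks $-k_2-1$, $1$, $k_1-1$, $1$.

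The delicate step is this final bookkeeping: in each of the four nonzero branches one must confirm that at every surviving index the coefficient $2(n+k_1+k_2+1)\widehat{\psi}(2n+k_1+k_2+2)$ is genuinely nonzero — that is, it neither hits the numerator zero $n=-k_2-1$ nor a pole of $\widehat{\psi}$ — and that the surviving set is exactly the one giving the stated span. Tracking how the vanishing at $n=-k_2-1$ disappears precisely under conditions (2)/(3) is what separates the rank-$(-k_2-1)$ and rank-$(k_1-1)$ cases (condition (1)) from the two rank-$1$ cases.
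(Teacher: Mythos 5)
Your proposal is correct and follows essentially the same route as the paper: apply Theorem~\ref{thm Bpva} with $\varphi=r^{m_2}$, evaluate the integral to get \eqref{Bprl}, read off the T-function trichotomy from the surviving exponents, and then compute the rank on the monomial basis via Lemma~\ref{lem Bcalc} by locating the single zero $n=-k_2-1$ of $\widehat{\psi}(2n+k_1+k_2+2)=\frac{2n+2k_2+2}{(2n+k_1+k_2+2+m_1+k_2)(2n+k_1+k_2+2+m_2-k_1)}\cdot\text{(const)}$ and noting it is cancelled by a pole exactly under conditions (2)--(3). The only cosmetic difference is that the paper disposes of the case $k_1+k_2<0$ by passing to adjoints, whereas you treat it by the same direct basis computation; both yield the identical case analysis and ranks.
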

\begin{proof} By Proposition~\ref{prop Bpva}, $T_{e^{ik_1\theta} r^{m_1}}T_{e^{ik_2\theta }r^{m_2}}-T_{e^{i(k_1+k_2)\theta }\psi}$ has finite rank if and only if
\begin{align*}
\psi(r)&=\frac{r^{m_2}}{r^{k_1}}-(m_1+k_1)r^{m_1+k_2}\int^1_{r}{\frac{t^{m_2}}{t^{m_1+k_1+k_2+1}}}{dt}\\
&=\left\{
{\begin{array}{ll}
   {\frac{k_1+m_1}{m_1+k_2-m_2+k_1}r^{m_1+k_2}-\frac{m_2-k_2}{m_1+k_2-m_2+k_1}r^{m_2-k_1}}, & \text{if\; $m_1+k_2\neq m_2-k_1$},  \\
   {r^{m_1+k_2}\left[1+\left(m_1+k_1\right)\log r\right]},
   & \text{if\; $m_1+k_2=m_2-k_1$},
\end{array}} \right.
\end{align*}
and $\psi(r)$ is a T-function. It is also clear that $\psi(r)$ defined by \eqref{Bprl} is a T-function if and only if one of the conditions (1)-(3) holds.

Next, we need to determine the range of the generalized semicommutator $T_{e^{ik_1\theta} r^{m_1}}T_{e^{ik_2\theta }r^{m_2}}-T_{e^{i(k_1+k_2)\theta }\psi}$. We have shown in Proposition~\ref{prop Bpva} that $\left(T_{e^{ik_1\theta} r^{m_1}}T_{e^{ik_2\theta }r^{m_2}}-T_{e^{i(k_1+k_2)\theta }\psi}\right)(z^n)=0$ holds for any natural number $n\geq N_2$.

Now, we assume that $k_1+k_2\geq0$. If $k_2\geq0$, then $N_2=0$, and hence $$rank\left(T_{e^{ik_1\theta} r^{m_1}}T_{e^{ik_2\theta }r^{m_2}}-T_{e^{i(k_1+k_2)\theta }\psi}\right)=0.$$
If $k_2\leq-1$, then $N_2=-k_2$. For any $0\leq n<-k_2$, by Lemma~\ref{lem Bcalc} we get
\begin{align*}
    &\left[T_{e^{ik_1\theta} r^{m_1}}T_{e^{ik_2\theta }r^{m_2}}-T_{e^{i(k_1+k_2)\theta }\psi}\right]({z}^{n})=2( n+k_1+k_2+1)\widehat{\psi}(2n+k_1+k_2+2)z^{n+k_1+k_2}.
\end{align*}
On the other hand, \eqref{Bprl} implies that
$$\widehat{\psi}(z)=\left\{
{\begin{array}{ll}
   {\frac{1}{m_1+k_2-m_2+k_1}\left(\frac{k_1+m_1}{m_1+k_2+z}-\frac{m_2-k_2}{m_2-k_1+z}\right)}, & \text{if\; $m_1+k_2\neq m_2-k_1$},  \\
   {\frac{z-k_1+k_2}{(m_1+k_2+z)^2}},
   & \text{if\; $m_1+k_2=m_2-k_1$,}
\end{array}} \right.$$
which is well defined on $\left\{z:
\textrm{Re}z\geq 2\right\}$.
Then a direct calculation shows that
\begin{align*}
    &\widehat{\psi}(2n+k_1+k_2+2)=0\Longleftrightarrow n=-k_2-1, k_1+m_1\neq 0\  \textrm{and}\ k_2-m_2\neq 0.
\end{align*}
Thus, if condition (1) holds, then $$rank\left(T_{e^{ik_1\theta} r^{m_1}}T_{e^{ik_2\theta }r^{m_2}}-T_{e^{i(k_1+k_2)\theta }\psi}\right)=\max\{0, -k_2-1\},$$
and if either condition (2) or condition (3) holds, then $$rank\left(T_{e^{ik_1\theta} r^{m_1}}T_{e^{ik_2\theta }r^{m_2}}-T_{e^{i(k_1+k_2)\theta }\psi}\right)=-k_2.$$
Since $-k_1\leq k_2\leq-1$ and $m_1,\;m_2\geq -1$, then both condition (2) and condition (3) imply that $k_2=-1$, as desired.

If $k_1+k_2<0$, then by using the adjoint operator, the desired results are obvious. This completes the proof.
\end{proof}

Below we present the examples of nonzero generalized semicommutators of finite rank on $L_a^2$.
\begin{example}\label{ex commute}  Let $\psi$ be a radial T-function on $D$, then on $L_a^2$, the following statements hold.
\begin{enumerate}
\item[(1)] $T_{e^{2i\theta} r}T_{e^{-2i\theta}r^2}-T_{{\psi}}$ has finite rank if and only if
$\psi(r)=4-3r^{-1}$.\\ In this case, $T_{e^{2i\theta} r}T_{e^{-2i\theta}r^2}-T_{\psi}=2\left(1\otimes1\right)$.
\item[(2)] $T_{e^{3i\theta}r^5}T_{e^{-i\theta} r^{-1}}-T_{{e^{2i\theta}\psi}}$ has finite rank if and only if
$\psi(r)=r^4$.\\ In this case, $T_{e^{3i\theta}r^5}T_{e^{-i\theta} r^{-1}}-T_{{e^{2i\theta}\psi}}=-\frac{3}{4}\left(z^2\otimes1\right)$.
\item[(3)] $T_{e^{3i\theta}r^5}T_{e^{-4i\theta} r^{3}}-T_{{e^{-i\theta}\psi}}$ has finite rank if and only if
$\psi(r)=8r-7$.\\ In this case, $T_{e^{4i\theta} r^{3}}T_{e^{-3i\theta}r^5}-T_{{e^{i\theta}\psi}}=\frac{4}{3}\left(1\otimes z\right)+\frac{4}{5}\left(z\otimes z^2\right)$.
\item[(4)] $T_{e^{i\theta} r^{-1}}T_{e^{-3i\theta}r^3}-T_{{e^{-2i\theta}\psi}}$ has finite rank if and only if
$\psi(r)=r^2$.\\ In this case, $T_{e^{i\theta} r^{-1}}T_{e^{-3i\theta}r^3}-T_{{e^{-2i\theta}\psi}}=-\left(1\otimes z^2\right)$.
\end{enumerate}
\end{example}

\section{Further results and remarks}

There is a general feeling
that the restrictions on their symbols for Toeplitz operators to satisfy certain algebraic properties on $L_h^2$ are more than $L_a^2$, as the orthonormal basis for $L_h^2$ involves co-analytic monomial. We list here some results which illustrate this view.
\begin{enumerate}
\item[(1).] Choe and Lee \cite{CL} showed that two analytic
Toeplitz operators on $L_h^2$ commute only when their symbols and
the constant function $1$ are linearly dependent, but analytic Toeplitz operators always commute on $L_a^2$.
\item[(2).] Guo and Zheng \cite{GZ} obtain a criterion for compactness of Toeplitz operators with bounded symbols on $L_h^2$, which requires one more condition than that on $L_a^2$.
\item[(3).] We showed in \cite{DLZ5} that Toeplitz operators with bounded symbols on $L_h^2$ commute with $T_{e^{ik\theta}r^m}$ only
in trivial cases, but \cite{CR} showed there are many nontrivial Toeplitz operators commuting with $T_{e^{ik\theta}r^m}$ on $L_a^2$.
\item[(4).] We proved in \cite{DZ4} that the product of two Toeplitz operators with bounded symbols $T_{e^{ik_1\theta}\varphi}T_{e^{ik_2\theta}r^m}$ on $L_h^2$ is a Toeplitz operator only in trivial cases, but it also holds on $L_a^2$ when either $\overline{e^{ik_1\theta}\varphi}$ or $e^{ik_2\theta}r^m$ is analytic.
\end{enumerate}
However, comparing to the Bergman space case, the theory of Toeplitz operators on $L_h^2$ has its own unique properties. The most unexpected results to us are the symmetry properties obtained in \cite{CKL,DZ3,DZ4}.
In the following, we will show the relation between finite rank commutators (or generalized semicommutators) of quasihomogenous Toeplitz operators on $L_h^2$ and that on $L_a^2$.

\begin{proposition}\label{prop relationcom}  Let $k_1,\;k_2\in\mathbb{Z}$, and let ${\varphi_{1}},\;{\varphi_{2}}$ and $\psi$ be radial T-functions. Then
the following statements hold.
\begin{enumerate}
  \item The commutator $\left[T_{e^{ik_1\theta} \varphi_{1}},T_{e^{ik_2\theta }\varphi_2}\right]$ has finite rank on $L_h^2$ if and only if $\left[T_{e^{ik_1\theta} \varphi_{1}},T_{e^{ik_2\theta }\varphi_2}\right]$ has finite rank on $L_a^2$.
  \item If the generalized semicommutator $T_{e^{ik_1\theta}\varphi_{1}}T_{e^{ik_2\theta }\varphi_2}-T_{e^{i(k_1+k_2)\theta }\psi}$ has finite rank on $L_h^2$, then both  $T_{e^{ik_1\theta}\varphi_{1}}T_{e^{ik_2\theta }\varphi_2}-T_{e^{i(k_1+k_2)\theta }\psi}$ and $T_{e^{ik_2\theta }\varphi_2}T_{e^{ik_1\theta}\varphi_{1}}-T_{e^{i(k_1+k_2)\theta }\psi}$ have finite rank on $L_a^2$.
\end{enumerate}
\end{proposition}
\begin{proof} The same reasoning as in the proof of \cite[Theorem 6]{CuL} shows that $\left[T_{e^{ik_1\theta} \varphi_{1}},T_{e^{ik_2\theta }\varphi_2}\right]$ has finite rank on $L_a^2$ if and only if \eqref{cmax} holds for any natural number $n\geq N_1$. Then it follows from Proposition~\ref{prop commutemax} that condition $(1)$ holds.

Now, we suppose $T_{e^{ik_1\theta}\varphi_{1}}T_{e^{ik_2\theta }\varphi_2}-T_{e^{i(k_1+k_2)\theta }\psi}$ has finite rank on $L_h^2$. Then by Proposition~\ref{prop pruductmax}, we get \eqref{pmax1} holds for any natural number $n\geq N_2$ and \eqref{pmax2} holds for any natural number $n\geq N_3$. On the other hand,
the same reasoning as in the proof of \cite[Theorem 4]{CuL} shows that $T_{e^{ik_1\theta}\varphi_{1}}T_{e^{ik_2\theta }\varphi_2}-T_{e^{i(k_1+k_2)\theta }\psi}$ has finite rank on $L_a^2$ if and only if \eqref{pmax1} holds for $n\geq N_2$, and $T_{e^{ik_2\theta }\varphi_2}T_{e^{ik_1\theta}\varphi_{1}}-T_{e^{i(k_1+k_2)\theta }\psi}$ has finite rank on $L_a^2$ if and only if
$$
2(n+k_1+1)\widehat{\varphi_1}(2n+k_1+2)\widehat{\varphi_2}(2n+2k_1+k_2+2)=\widehat{\psi }(2n+k_1+k_2+2)
$$
holds for any natural number $n\geq \max\left\{0,-k_1,-k_1-k_2\right\}$.
Replacing $n$ by $n-k_1-k_2$, we see that the above equation is the same as
\eqref{pmax2}. This completes the proof.
\end{proof}

\begin{remark}
(1) Suppose $k_1,\;k_2,\;m,\;\varphi$ and $\psi$ satisfy one of the conditions in Theorem~\ref{thm semicom}. Then as a direct consequence of Proposition~\ref{prop relationcom}, we get that $T_{e^{ik_1\theta} r^m}T_{e^{ik_2\theta }\varphi}-T_{e^{i(k_1+k_2)\theta }\psi}$ has finite rank on $L_a^2$.

(2) In general, the rank of finite rank commutator or generalized semicommutator of quasihomogenous Toeplitz operators on $L_h^2$ should be greater than or equal to that on $L_a^2$. However, it is not all the cases. For example, we have shown in Theorem~\ref{thm commute} and Theorem~\ref{thm semicom} that if $m\in \mathbb{R}$, $m\geq -1$ and $\varphi(r)=\frac{m+1}{2}r^{-1}-\frac{m-1}{2}r,$ then on $L_h^2$,
$$T_{e^{i\theta} r^m}T_{e^{-i\theta }\varphi}=T_{e^{-i\theta }\varphi}T_{e^{i\theta} r^m}$$ and $$T_{e^{i\theta} r^m}T_{e^{-i\theta }\varphi}=T_{1}.$$
However, by a direct calculation, one can get that on $L_a^2$,
$$\left[T_{e^{i\theta} r^m},T_{e^{-i\theta }\varphi}\right]=-\left(T_{e^{i\theta} r^m}T_{e^{-i\theta }\varphi}-T_{1}\right)=1\otimes 1.$$
Therefore, both $\left[T_{e^{i\theta} r^m},T_{e^{-i\theta }\varphi}\right]$ and $T_{e^{i\theta} r^m}T_{e^{-i\theta }\varphi}-T_{1}$ have rank zero on $L_h^2$, and rank one on $L_a^2$.
\end{remark}

\textrm{Acknowledgements:}
This work was supported in part by the National Natural Science Foundation of
China (Grant Nos. 11201331; 11371276; 11401431).

\end{document}